\newtheorem{theo}{Theorem}[section]
\newtheorem{lemma}[theo]{Lemma}
\newtheorem{defn}[theo]{Definition}
\newtheorem{cor}[theo]{Corollary}
\newtheorem{example}[theo]{Example}
\newtheorem{quest}[theo]{Question}
\newtheorem{problem}[theo]{Problem}
\newenvironment{proof}{\noindent {\sc Proof}.}
                {\phantom{a} \hfill \framebox[2.2mm]{ } \bigskip}
\newcommand{\NN}{\mathbb{N}}
\newcommand{\ZZ}{\mathbb{Z}}
\def\int{{\rm int}}
\def\cc{{\rm c}}
\newcommand{\G}{{\cal G}}
\renewcommand{\L}{{\cal L}}
\newcommand{\D}{{\mathcal{D}}}
\renewcommand{\b}{\backslash}
\newcommand{\eps}{\varepsilon}
\renewcommand{\phi}{\varphi}
\renewcommand{\theta}{\vartheta}
\title{Eulerian properties of hypergraphs}
\author{M. Amin Bahmanian\footnote{Email: mbahman@ilstu.edu. Mailing address: Department of Mathematics, Illinois State University, Stevenson Hall 313, Campus Box 4520, Normal, Illinois, 61790-4520, USA.} $\,$
and Mateja \v{S}ajna\footnote{Email: msajna@uottawa.ca.  Mailing address: Department of Mathematics and Statistics, University of Ottawa, 585 King Edward Avenue, Ottawa, ON, K1N 6N5,Canada.} }
\begin{document}
\maketitle \baselineskip 17pt

\begin{abstract}
In this paper we study three substructures in hypergraphs that generalize the notion of an Euler tour in a graph. A {\em flag-traversing tour} of a hypergraph corresponds to an Euler tour of its incidence graph, hence complete characterization of hypergraphs with an Euler tour follows from Euler's Theorem. An {\em Euler tour} is a closed walk that traverses each edge of the hypergraph exactly once; and an {\em Euler family} is a family of closed walks that cannot be concatenated and that jointly traverse each edge of the hypergraph exactly once. Lonc and Naroski have shown that the problem of existence of an Euler tour is NP-complete even on a very restricted subclass of 3-uniform hypergraphs, while we show that the problem of existence of an Euler family is polynomial on the class of all hypergraphs.

Furthermore, we examine the necessary conditions for a hypergraph to admit an Euler family (Euler tour, respectively); we show that while these necessary conditions are sufficient for connected graphs, they are  not sufficient for general hypergraphs. On the other hand, we exhibit a new class of hypergraphs for which these necessary conditions are also sufficient, extending a result by Lonc and Naroski. We give a partial characterization of hypergraphs with an Euler family (Euler tour, respectively) in terms of the intersection graph of the hypergraph, and a complete (but not easy to verify) characterization in terms of the incidence graph.   For hypergraphs with an Euler family, we give a complete verifiable characterization using a theorem of Lov\'{a}sz, and then show that every 3-uniform hypergraph without cut edges admits an Euler family. Finally,  we show that a hypergraph admits an Euler family if and only if it can be decomposed into cycles, and exhibit a relationship between 2-factors in a hypergraph and eulerian properties of its dual.

\medskip
\noindent {\em Keywords:} Hypergraph, flag-traversing tour, Euler tour, eulerian hypergraph, Euler family, quasi-eulerian hypergraph, cycle decomposition, 2-factor.
\end{abstract}

\newpage

\tableofcontents

\section{Introduction}

As first claimed by Euler in 1741 \cite{Eul}, and proved by Hierholzer and Wiener in 1873 \cite{HieWie}, it is now well known that a connected graph admits an Euler tour --- that is, a closed walk traversing each edge exactly once --- if and only if it has no vertices of odd degree. In this paper, we are concerned with the analogous problem for hypergraphs. As we shall see, this extended problem is much more complex; in fact, there is more than one natural way to generalize the notion of an Euler tour to hypergraphs. In this paper we shall consider three such natural extensions.

To our knowledge, not much has been previously known about eulerian properties of hypergraphs. The most in-depth treatment to date can be found in \cite{LonNar}, where  Euler tours (closed walks traversing each edge exactly once) of $k$-uniform hypergraphs are considered. In particular, the authors of \cite{LonNar} determine some necessary conditions (see our Lemma~\ref{lem:NecCond}) for existence of an Euler tour in a hypergraph, and show that these are also sufficient for certain classes of $k$-uniform hypergraphs (Theorem~\ref{the:LonNar}). They also show that the problem {\sc Euler Tour} is NP-complete on the class of $k$-uniform hypergraphs for each $k \ge 3$, and even just on the class of  3-uniform hypergraphs with a connected skeleton; see \cite[Theorem 7]{LonNar}.

Some results on Euler tours in block designs have been previously obtained under the disguise of universal cycles and rank-2 universal cycles \cite{Dew}, as well as 1-overlap cycles \cite{HorHur, HorHur4}. In particular, these results imply existence of Steiner triple systems \cite[Theorem 22]{HorHur} and Steiner quadruple systems \cite[Theorem 1.2]{HorHur4} with an Euler tour for all admissible orders greater than 4, existence of twofold triple systems with an Euler tour for many congruency classes of admissible orders \cite[Theorem 5.10]{Dew}, as well as existence of an Euler tour in every cyclic twofold triple system \cite[Corolllary 5.11]{Dew} and every cyclic Steiner triple system of order greater than 3 \cite[Corolllary 5.10]{Dew}.

As we shall see in Subsection~\ref{sec:L(H)}, existence of an Euler tour in a hypergraph implies existence of a Hamilton cycle in its intersection graph, but not conversely. A lot of work has been done on hamiltonicity of block-intersection graphs of designs. The most comprehensive of these results, due to Alspach et al. \cite{AlsHeiMoh}, shows that the block-intersection graph of every pairwise balanced design with index 1 is hamiltonian; however, this construction does not give rise to an Euler tour in the design. Finally, we mention the article \cite{SamPas}, where certain closed walks in hypergraphs (called s-eulerian, p-eulerian, 2-eulerian, and f-eulerian) are studied; however, these concepts are only remotely related to eulerian properties of hypergraphs considered presently.

In this paper, we investigate three natural generalizations of the notion of Euler tour to hypergraphs: (1) a {\em flag-traversing tour} of a hypergraph, which corresponds to an Euler tour of the incidence graph; (2) an {\em Euler tour}, a closed walk that traverses each edge of the hypergraph exactly once; and (3) an {\em Euler family}, a family of closed walks that cannot be concatenated and that jointly traverse each edge of the hypergraph exactly once. We show that the second of these problems is NP-complete even on the very restricted class of linear 2-regular 3-uniform hypergraphs, while the other two problems are polynomial on the class of all hypergraphs. In fact, Euler's Theorem \cite{Eul} implies a complete and easy characterization of hypergraphs with flag-traversing tours, and this result will be presented in Subsection~\ref{sec:ET}. The rest of the paper is devoted to  Euler tours and Euler families. As expected for an NP-complete problem, only partial results will be offered for the problem of existence of  Euler tours in hypergraphs;  we shall present some necessary conditions, some sufficient conditions, complete characterization for certain classes of hypergraphs, as well as characterization in terms of the intersection graph, the incidence graph, and the blocks of the hypergraph. Analogous, as well as additional results will be presented for hypergraphs with Euler families.

The paper is organized as follows. In the remainder of this section, we introduce basic hypergraph terminology and the three eulerian substructures in hypergraphs, as well as completely characterize hypergraphs with a flag-traversing Euler tour. In the second part of the paper (Section~\ref{sec:main}), we focus on hypergraphs with Euler tours and Euler families. First, we examine the necessary conditions for a hypergraph to admit an Euler tour or Euler family; we show that while these necessary conditions are sufficient for connected graphs, they are  not sufficient for general hypergraphs. On the other hand, we exhibit a new class of hypergraphs for which these necessary conditions are also sufficient. In Subsection~\ref{sec:L(H)} we then give a partial characterization in terms of the intersection graph of the hypergraph, and in Subsection~\ref{sec:G}, a complete (but not easy to verify) characterization in terms of the incidence graph. Block structure with respect to eulerian properties of a hypergraph is considered in Subsection~\ref{sec:blocks}. In Subsection~\ref{sec:compl}, we determine the complexity of the problems {\sc Euler Tour} and {\sc Euler Family}, and in the next two sections we focus on the latter. We give a complete verifiable characterization of hypergraphs with an Euler family using a theorem of Lov\'{a}sz, and then show that every 3-uniform hypergraph without cut edges admits an Euler family. Finally, in Subsection~\ref{sec:CD}, we show that a hypergraph admits an Euler family if and only if it can be decomposed into cycles, and exhibit a relationship between 2-factors in a hypergraph and eulerian properties of its dual.

\subsection{Preliminaries}

For any graph-theoretic terms not defined here, the reader is referred to \cite{BonMur}, and for hypergraph definitions, to our earlier manuscript \cite{BahSaj}. Note that, since 2-uniform hypergraphs can be thought of as loopless graphs, most terms defined below also extend to graphs.

A hypergraph $H$ is an ordered pair $(V,E)$, where  $V$ and $E$ are disjoint finite sets such that $V \ne \emptyset$, together with a function  $\psi: E \rightarrow 2^V$, called the {\em incidence function}. The elements of $V=V(H)$ are called {\em vertices}, and the elements of $E=E(H)$ are called {\em edges}. The number of vertices $|V|$ and number of edges $|E|$ are called the {\em order} and {\em size} of the hypergraph, respectively. Often we denote $n=|V|$ and $m=|E|$. A hypergraph with a single vertex is called {\em trivial}, and a hypergraph with no edges is called {\em empty}.

Two edges $e,e' \in E$ are said to be {\em parallel} if $\psi(e)=\psi(e')$, and the number of edges parallel to edge $e$ (including $e$) is called the {\em multiplicity} of $e$. A hypergraph $H$ is called {\em simple} if no edge has multiplicity greater than 1; that is, if $\psi$ is injective.

\bigskip

As is customary for graphs, the incidence function may be omitted when no ambiguity can arise (in particular, when the hypergraph is simple, or when we do not need to distinguish between distinct parallel edges). An edge $e$ is then identified with the subset $\psi(e)$ of $V$, and for $v \in V$ and $e \in E$, we then more conveniently write $v \in e$ or $v \not\in e$ instead of $v \in \psi(e)$ or $v \not\in \psi(e)$, respectively.
%Moreover, $E$ is then treated as a multiset, and we use double braces to emphasize this fact when needed. Thus, for example, $\{ 1,2\}=\mset{1,2}$ but $\{ 1,1,2\}=\{1,2\} \ne \mset{1,1,2}$.

\bigskip

Let $H=(V,E)$ be a hypergraph. If $v,w \in V$ are distinct vertices and there exists $e \in E$ such that $v,w \in e$, then $v$ and  $w$ are said to be {\em adjacent} in $H$ (via edge $e$). Similarly, if $e,f \in E$ are distinct (but possibly parallel) edges and $v \in V$ is  such that $v \in e \cap f$, then $e$ and $f$ are said to be {\em adjacent} in $H$ (via vertex $v$).

Each ordered pair $(v,e)$ such that $v \in V$, $e \in E$, and $v \in e$ is called a {\em flag} of $H$; the set of flags is denoted by $F(H)$. If $(v,e)$ is a flag of $H$, then we say that vertex $v$ is {\em incident} with edge $e$.

The {\em degree} of a vertex $v \in V$ (denoted by $\deg_H(v)$ or simply $\deg(v)$ if no ambiguity can arise) is the number of edges $e \in E$ such that $v \in e$. A vertex of degree 0 is called {\em isolated}, and a vertex of degree 1 is called {\em pendant}. A hypergraph $H$ is said to be {\em $r$-regular on $V'$}, for $V' \subseteq V$, if every vertex in $V'$ has degree $r$ in $H$, and simply {\em $r$-regular} if it is $r$-regular on $V$. Similarly, $H$ is said to be {\em even (odd) on $V'$} if every vertex of $V'$ has even (respectively, odd) degree in $H$, and simply {\em even (odd)} if it is even (respectively, odd) on $V$.

The maximum (minimum) cardinality $|e|$ of any edge $e \in E$ is called the {\em rank} ({\em corank}, respectively) of $H$. A hypergraph $H$ is {\em uniform of rank} $r$ (or {\em $r$-uniform}) if $|e|=r$ for all $e \in E$. An edge $e \in E$ is called %a {\em singleton edge} if $|e|=1$, and
{\em empty} if $|e|=0$.

\bigskip

A hypergraph $H'=(V',E')$ is called a {\em hypersubgraph} of a hypergraph $H=(V,E)$ if $V' \subseteq V$ and $E' \subseteq E$. For $E' \subseteq E$, the hypergraph $(\cup_{e \in E'} e,E')$, denoted by $H[E']$, is called the {\em hypersubgraph of $H$ induced by the edge set $E'$}.
If $e \in E$, we write shortly $H-e$ for the hypersubgraph $(V,E-\{e \})$, also called an {\em edge-deleted hypersubgraph}.
A hypersubgraph $H'=(V',E')$ of $H$ is called {\em spanning} if $V'=V$.
An {\em $r$-factor} of $H$ is a spanning $r$-regular hypersubgraph of $H$.
Furthermore, if $a$ and $b$ are integers with $0 \le a \le b$, then we define an {\em $(a,b)$-factor} of $H$ as a spanning hypersubgraph of $H$ in which every vertex has degree in the interval $[a,b]$.

\bigskip

If $H=(V,E)$ is a hypergraph  and $V' \subseteq  V$, then $H[V']$, called the {\em subhypergraph of $H$ induced by $V'$}, is the hypergraph obtained from $H$ by deleting all vertices of $V-V'$ from $V$ and from every edge of $H$, and subsequently deleting all empty edges. (See \cite{BahSaj} for a definition of general subhypergraphs.) For $v \in V$, the hypergraph $H[V-\{ v \}]$ is also denoted by $H \b v$ and  called a {\em vertex-deleted subhypergraph} of $H$.

\bigskip

A hypergraph is called {\em linear} if every pair of distinct edges intersect in at most one vertex.

\bigskip

The {\em union} of hypergraphs is a straight generalization of union of graphs.
If the edge set of a hypergraph $H$ is a disjoint union of the edge sets of its hypersubgraphs $H_1,\ldots,H_k$, then we say that $H$ {\em decomposes} into $H_1,\ldots,H_k$, and write $H=H_1 \oplus \ldots \oplus H_k$. A decomposition of a hypergraph $H$ into its $r$-factors is called an {\em $r$-factorization} of $H$.

\bigskip

The main tool used in this paper is the conversion of a problem about hypergraphs to a problem about graphs. The incidence graph, to be defined below, is particularly helpful since it contains complete information about its hypergraph.

Let  $H=(V,E)$ be a hypergraph with incidence function $\psi$. The {\em incidence graph} $\G(H)$ of $H$ is the graph $\G(H)=(V_{G},E_G)$ with $V_G=V \cup E$ and $E_G=\{ ve: v\in V, e \in E, v \in \psi(e)\}$.
Thus $\G(H)$ is a bipartite simple graph with bipartition $\{ V, E \}$. We call a vertex $x$ of $\G(H)$ a {\em v-vertex} if $x \in V$, and an {\em e-vertex} if $x \in E$. Note that the edge set of $\G(H)$ can be identified with the flag set $F(H)$; that is, $E_G=\{ ve: (v,e) \in F(H)\}$.

The {\em intersection graph} (or {\em line graph}) of the hypergraph $H=(V,E)$, denoted $\L(H)$, is the graph with vertex set $E$ and edge set $\{ e e': e, e'\in E, e \ne e', e \cap e' \ne \emptyset \}$. More generally, for any positive integer $\ell$, we define the {\em $\ell$-intersection graph} of the hypergraph $H=(V,E)$, denoted $\L_{\ell}(H)$, as the graph with vertex set $E$ and edge set $\{ e e': e, e'\in E, e \ne e', |e \cap e'| = \ell \}$, and the {\em $\ell^*$-intersection graph} of $H$, denoted $\L_{\ell}^*(H)$, as the graph with vertex set $E$ and edge set $\{ e e': e, e'\in E, e \ne e', |e \cap e'| \ge \ell \}$. Note that the intersection graphs do not contain full information about the hypergraph.

\bigskip

Let $H=(V,E)$ be a hypergraph, let $u,v \in V$, and let $k \ge 0$ be an integer. A {\em $(u,v)$-walk of length $k$}  in $H$ is a sequence $v_0 e_1 v_1 e_2 v_2 \ldots v_{k-1} e_k v_k$ of vertices and edges (possibly repeated) such that $v_0,v_1,\ldots,v_k \in V$, $e_1,\ldots,e_k \in E$, $v_0=u$, $v_k=v$, and for all $i=1,2,\ldots, k$, the vertices $v_{i-1}$ and $v_i$ are adjacent in $H$ via the edge $e_i$. Vertices $v_0,v_1,\ldots,v_k$ are called the {\em anchors} of $W$; $v_0$ and $v_k$ are the {\em endpoints}, and $v_1,\ldots,v_{k-1}$ are the {\em internal vertices}.
Observe that since adjacent vertices are by definition distinct, no two consecutive vertices in a walk are the same. Concatenation of walks is defined in the usual way.

A walk $W=v_0 e_1 v_1 e_2 v_2 \ldots v_{k-1} e_k v_k$  in a hypergraph $H=(V,E)$  is called (i) a {\em trail}, if the anchor flags $(v_0,e_1),(v_1, e_1),(v_1,e_2),\ldots,(v_{k-1},e_k),(v_k,e_k)$ are pairwise distinct; (ii) a {\em strict trail} if the edges $e_1,\ldots,e_k$ are pairwise distinct; and (iii) a {\em path} if both the vertices  $v_0,v_1,\ldots,v_k$ and the edges $e_1,\ldots,e_k$ are pairwise distinct. (Here,  ``distinct'' should be understood in the strict sense; that is, parallel edges need not be distinct.)

A walk  $W=v_0 e_1 v_1 e_2 v_2 \ldots v_{k-1} e_k v_k$ in a hypergraph $H=(V,E)$ is called {\em closed} if $k \ge 2$ and $v_0=v_k$. A {\em closed trail} and {\em closed strict trail} are defined analogously. If  the vertices  $v_0,v_1,\ldots,v_{k-1}$ and the edges $e_1,\ldots,e_k$ are pairwise distinct, then the closed walk $W$ is called a {\em cycle} (sometimes called a {\em Berge cycle} in the literature).

The following result describes the correspondence between the various types of walks in a hypergraph and its incidence graph.

\begin{lemma}\cite{BahSaj}\label{lem:W-W_G}
Let $H=(V,E)$  be a hypergraph and $G=\G(H)$ its incidence graph.  Let $v_i \in V$ for $i=0,1,\ldots,k$, and $e_i \in E$ for $i=1,\ldots,k$, and let $W=v_0 e_1 v_1 e_2 v_2 \ldots v_{k-1} e_k v_k$. Denote the corresponding sequence of vertices in $G$ by $W_G$. Then the following hold:
\begin{enumerate}
\item $W$ is a (closed) walk in $H$ if and only if $W_G$ is a (closed) walk in $G$ with no two consecutive v-vertices being the same.
\item $W$ is a trail (path, cycle) in $H$ if and only if $W_G$ is a trail (path, cycle, respectively) in $G$.
\item $W$ is a strict trail in $H$ if and only if $W_G$ is a trail in $G$ that visits every $e \in E$ at most once.
\end{enumerate}
\end{lemma}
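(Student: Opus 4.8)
The plan is to prove all three parts by directly unravelling the definitions, with a single translation between adjacency in $H$ and incidence in $\G(H)$ doing all the work. First I would set up notation: write $W_G$ as the sequence $v_0, e_1, v_1, e_2, \ldots, e_k, v_k$, regarding each $v_i$ as a v-vertex and each $e_i$ as an e-vertex of $G=\G(H)$, and record two basic observations. (i) For each $i \in \{1,\ldots,k\}$, the vertices $v_{i-1}$ and $v_i$ are adjacent in $H$ via $e_i$ if and only if $v_{i-1}\ne v_i$ and both $v_{i-1}e_i$ and $e_iv_i$ are edges of $G$; this is immediate from the definitions of $\G(H)$ and of adjacency in $H$. (ii) At step $i$, the walk $W_G$ traverses the edges $v_{i-1}e_i$ and $e_iv_i$ of $G$, which correspond respectively to the anchor flags $(v_{i-1},e_i)$ and $(v_i,e_i)$ of $W$; so the multiset of edges of $G$ traversed by $W_G$ and the multiset of anchor flags of $W$ are in a natural bijection, and in particular one has no repeats if and only if the other does.

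With (i) and (ii) in hand, part~1 is immediate: $W$ is a walk in $H$ iff every step is legal, which by (i) is iff $W_G$ is a walk in $G$ with $v_{i-1}\ne v_i$ for all $i$, i.e.\ with no two consecutive v-vertices equal. For the closed versions, since $V\cap E=\emptyset$ the walk $W_G$ is closed exactly when $v_0=v_k$, and the requirement $k\ge 2$ in the definition of a closed walk in $H$ is automatic here, since a closed $W_G$ with $k\le 1$ would force two consecutive v-vertices to coincide. Part~2 then follows by layering the extra constraints onto part~1: distinctness of the anchor flags of $W$ is, by (ii), distinctness of the edges traversed by $W_G$, giving ``trail in $H$ $\iff$ trail in $G$''; distinctness of all of $v_0,\ldots,v_k$ together with all of $e_1,\ldots,e_k$ is, since $V$ and $E$ are disjoint, distinctness of all vertices of $W_G$, giving the path statement; and the closed analogue gives the cycle statement, the usual length-$\ge 3$ requirement for a cycle in $G$ being automatic since $G$ is bipartite, so every cycle has length $\ge 4$.

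For part~3 I would argue both directions. If $W$ is a strict trail in $H$, then it is in particular a walk in $H$, so by part~1 $W_G$ is a walk in $G$ with no two consecutive v-vertices equal; since $e_1,\ldots,e_k$ are pairwise distinct, $W_G$ visits each e-vertex at most once, and the only way $W_G$ could repeat an edge of $G$ would be as $v_{i-1}e_i=e_iv_i$ for some $i$, i.e.\ $v_{i-1}=v_i$, which is excluded --- so $W_G$ is a trail. Conversely, if $W_G$ is a trail in $G$ visiting each e-vertex at most once, then $v_{i-1}e_i\ne e_iv_i$ for all $i$, hence $v_{i-1}\ne v_i$ for all $i$; as $W_G$ is then a walk in $G$ with no two consecutive v-vertices equal, part~1 gives that $W$ is a walk in $H$, and its edges $e_1,\ldots,e_k$ --- the distinct e-vertices of $W_G$ --- are pairwise distinct, so $W$ is a strict trail.

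I do not expect a genuine obstacle here: the argument is pure definition-chasing driven by observations (i) and (ii). The only points that need care are the degenerate cases --- length-zero walks, and closed walks or cycles of small length, where the length conditions on the two sides must be reconciled using the bipartiteness of $G$ --- and the small but essential remark that a trail in $G$ already forbids two consecutive equal v-vertices, which is exactly what makes the converse direction of part~3 go through.
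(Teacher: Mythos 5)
The paper does not actually prove this lemma: it is imported verbatim from the authors' earlier manuscript \cite{BahSaj}, so there is no in-paper argument to compare against. Your proof is correct and is exactly the definition-unwinding one would expect — the bijection between anchor flags of $W$ and traversed edges of $W_G$, plus the observation that a trail in $G$ already rules out $v_{i-1}=v_i$, carries all three parts — and you correctly flag the only delicate points (degenerate lengths, bipartiteness forcing cycles of $G$ to have length $\ge 4$, hence $k \ge 2$).
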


A hypergraph $H=(V,E)$ is said to be {\em connected} if for every pair of distinct vertices $u,v \in V$ there exists a $(u,v)$-walk (or equivalently, a $(u,v)$-path) in $H$. The {\em connected components} of $H$ are the maximal connected hypersubgraphs of $H$ that have no empty edges. The number of connected components of $H$ is denoted by $\cc(H)$.

\begin{theo}\cite{BahSaj}\label{the:conn}
Let $H=(V,E)$ be a hypergraph  without empty edges. Then $H$ is connected if and only if its incidence graph $G=\G(H)$ is connected.
\end{theo}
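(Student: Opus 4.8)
The plan is to prove both implications by translating walks in $H$ into walks in $G=\G(H)$ and back, the translation being supplied by Lemma~\ref{lem:W-W_G}. The only content beyond that dictionary is bookkeeping for the two kinds of vertices of $G$ (v-vertices and e-vertices), and that bookkeeping is precisely where the hypothesis ``$H$ has no empty edges'' is used.

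For the forward direction, assume $H$ is connected and take two distinct vertices $x,y$ of $G$; the goal is to exhibit an $x$--$y$ walk in $G$. If $x,y$ are both v-vertices, they are distinct vertices of $H$, so there is an $(x,y)$-path in $H$, and its image in $G$ is a path, hence a walk, by Lemma~\ref{lem:W-W_G}. If $y$ is an e-vertex, then since the edge $y$ is non-empty we may pick $w\in\psi(y)$, so $wy\in E(G)$; similarly, if $x$ is an e-vertex we pick $w'\in\psi(x)$ with $w'x\in E(G)$, and otherwise set $w'=x$. By the v-vertex case (or trivially, if $w'=w$) there is a $w'$--$w$ walk in $G$; prepending and appending the incident flags $w'x$ and $wy$ as needed yields an $x$--$y$ walk in $G$. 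Hence any two vertices of $G$ lie in a common component, so $G$ is connected.

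For the reverse direction, assume $G$ is connected and take distinct $u,v\in V$. There is a $u$--$v$ path $P$ in $G$; since $G$ is bipartite with parts $V$ and $E$ and both endpoints of $P$ are v-vertices, $P$ has even length and reads $u=v_0,e_1,v_1,\ldots,e_k,v_k=v$ with all $v_i\in V$ and all $e_i\in E$. Because $P$ is a path, consecutive $v_i$ are distinct, and because $v_{i-1}e_i,e_iv_i\in E(G)$ we have $v_{i-1},v_i\in\psi(e_i)$; thus $v_0e_1v_1\ldots e_kv_k$ is of the form to which Lemma~\ref{lem:W-W_G} applies and is a $(u,v)$-path (in particular a $(u,v)$-walk) in $H$. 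Since $u,v$ were arbitrary, $H$ is connected.

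The argument is short and each step is routine; the single point that genuinely requires care --- the ``main obstacle,'' such as it is --- is the treatment of e-vertices in the forward direction, which is exactly where ``no empty edges'' is indispensable (an empty edge would be an isolated vertex of $G$ even though $H$ could still be connected on its vertex set). Everything else is a direct application of the walk correspondence of Lemma~\ref{lem:W-W_G} and of bipartiteness of $\G(H)$. A tidy way to organize the writeup is to isolate the claim ``two vertices of $G$ lie in a common component of $G$ if and only if the vertices/edges they represent lie in a common component of $H$,'' from which the theorem is immediate; for our purposes, though, the two-direction argument above suffices.
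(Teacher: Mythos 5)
This theorem is imported from \cite{BahSaj} and the paper gives no proof of it, so there is no in-paper argument to compare yours against. Your proof is correct and is the natural one: the forward direction reduces e-vertices to v-vertices via the no-empty-edges hypothesis and then applies the walk correspondence of Lemma~\ref{lem:W-W_G}, while the reverse direction uses bipartiteness of $\G(H)$ to read a $u$--$v$ path of $G$ as an alternating vertex--edge sequence, which that same lemma certifies as a $(u,v)$-walk in $H$. I see no gaps.
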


%%%%%%%%%%%%%%%%%%%%%%%%
% Eulerian definitions %
%%%%%%%%%%%%%%%%%%%%%%%%

\subsection{Introduction to eulerian properties of hypergraphs}

An Euler tour in a graph is usually defined as a closed trail that traverses every edge of the graph. Equivalently, an Euler tour in a graph is a closed trail that traverses each flag exactly once. This observation suggests two natural ways to generalize Euler tours to hypergraphs. We add a third one, observing that in a connected graph, a family of closed trails that jointly traverse each edge exactly once, can always be concatenated into an Euler tour.

\begin{defn}{\rm
Let  $H=(V,E)$ be a hypergraph.
\begin{enumerate}
\item A {\em flag-traversing tour} of $H$ is a closed trail of $H$ traversing every flag of $H$.

\item An {\em  Euler tour} of $H$ is a closed strict trail of $H$ traversing every edge of $H$.

\item An {\em Euler family} of $H$ is a family ${\cal F}=\{ T_1,\ldots,T_k\}$ of closed strict  trails of $H$ such that: \\ (i) each edge of $H$ lies in exactly one trail of the family, and \\ (ii) the trails $T_1,\ldots,T_k$ are
pairwise anchor-disjoint.
\end{enumerate}}
\end{defn}

We remark that  a hypergraph admits a family of closed strict trails satisfying Property (i) if and only if it admits a family satisfying Properties (i) and (ii), since two closed strict trails with a common anchor can be concatenated into a longer closed  strict trail.

%If $H$ is a connected graph, then the strict closed trails in any Euler family of $H$ can be concatenated into an Euler tour of $H$; however, as we shall see, there are connected hypergraphs that admit an Euler family but no  Euler tour.

The main objective of this paper is to characterize hypergraphs with a flag-traversing tour,  Euler tour,  and Euler family, respectively. As we shall see, these three seemingly similar problems --- which are, in fact, equivalent for connected graphs --- greatly differ in their difficulty. In the next section, we completely solve the first problem. Then, in the remainder of the paper, we give partial solutions to the latter two problems. In particular, we show that the second problem is NP-complete even on a very restricted subclass of hypergraphs, while the third is polynomial on the set of all hypergraphs.

%%%%%%%%%%%%%%%
% Euler tours %
%%%%%%%%%%%%%%%

\subsection{Hypergraphs with a flag-traversing tour}\label{sec:ET}

\begin{theo}\label{the:Etour}
A connected hypergraph $H=(V,E)$ has a flag-traversing tour if and only if its incidence graph  has an Euler tour, that is, if and only if $\deg_H(v)$ and $|e|$ are even for all $v \in V$ and $e \in E$.
\end{theo}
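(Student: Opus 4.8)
The plan is to reduce the statement to Euler's Theorem for graphs via the incidence graph $G = \G(H)$. The key observation is that flags of $H$ correspond to edges of $G$, so a closed trail of $H$ traversing every flag should correspond to a closed trail of $G$ traversing every edge, i.e., an Euler tour of $G$. First I would invoke Lemma~\ref{lem:W-W_G}: given a closed trail $W = v_0 e_1 v_1 \ldots e_k v_k$ in $H$, the corresponding sequence $W_G$ is a closed trail in $G$; and conversely, a closed trail in $G$ with no two consecutive v-vertices equal pulls back to a closed trail in $H$. The only subtlety on the ``pull-back'' side is that a closed trail in $G$ alternates between v-vertices and e-vertices automatically (since $G$ is bipartite with bipartition $\{V, E\}$), so the ``no two consecutive v-vertices'' condition is free — every closed walk in $G$ is automatically of the form $v_0 e_1 v_1 \ldots$; hence every closed trail of $G$ corresponds to a closed trail of $H$, and conversely. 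Moreover, $W$ traverses every flag of $H$ exactly once iff $W_G$ traverses every edge of $G$ exactly once, since the flags of $W$ (the anchor flags $(v_{i-1}, e_i), (v_i, e_i)$) are precisely the edges $v_{i-1}e_i, v_ie_i$ of $G$ used by $W_G$. This establishes that $H$ has a flag-traversing tour iff $G$ has an Euler tour.

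Next I would apply Euler's Theorem (as cited in the Introduction): a connected graph has an Euler tour iff every vertex has even degree. Here I need $G$ to be connected, which follows from Theorem~\ref{the:conn} provided $H$ has no empty edges; and indeed if $H$ had an empty edge $e$, it could never be traversed by any closed trail, so $H$ would have no flag-traversing tour, and on the other side $|e| = 0$ is even but the claimed equivalence would still need checking — so I would handle this by noting that a connected hypergraph with more than one vertex has no empty edges anyway, or more cleanly: if $H$ is connected then either $H$ is trivial/empty (a degenerate case one dispatches directly) or $H$ has no empty edges, so Theorem~\ref{the:conn} applies and $G$ is connected. Then Euler's Theorem gives: $H$ has a flag-traversing tour iff every vertex of $G$ has even degree.

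Finally I would translate the degree condition on $G$ back to $H$. The vertices of $G$ are $V \cup E$. For $v \in V$, $\deg_G(v)$ is the number of edges $ve$ of $G$, which equals the number of $e \in E$ with $v \in e$, i.e. $\deg_G(v) = \deg_H(v)$. For $e \in E$, $\deg_G(e)$ is the number of edges $ve$ of $G$, which equals the number of $v \in V$ with $v \in e$, i.e. $\deg_G(e) = |e|$. Hence ``every vertex of $G$ has even degree'' is exactly ``$\deg_H(v)$ and $|e|$ are even for all $v \in V$, $e \in E$'', completing the proof.

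I do not expect any genuine obstacle here: the proof is essentially bookkeeping built on Lemma~\ref{lem:W-W_G}, Theorem~\ref{the:conn}, and Euler's Theorem. The only point requiring a moment's care is the handling of empty edges / degenerate hypergraphs so that Theorem~\ref{the:conn} can be legitimately applied to obtain connectedness of $G$; a one-line remark suffices.
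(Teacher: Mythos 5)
Your proposal is correct and follows essentially the same route as the paper: translate a flag-traversing tour of $H$ into an Euler tour of the incidence graph $\G(H)$ via Lemma~\ref{lem:W-W_G} and then apply Euler's Theorem, with the degree correspondence $\deg_G(v)=\deg_H(v)$ and $\deg_G(e)=|e|$. The paper's proof is just a terser version of yours; your extra care about connectedness of $G$ and empty edges (and the observation that the ``no two consecutive v-vertices the same'' condition is automatic for trails in the bipartite graph $G$) only makes the bookkeeping more explicit.
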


\begin{proof}
Let $H=(V,E)$ be a connected hypergraph, and $G$ its incidence graph. A flag-traversing of $H$ is a closed walk $W$ of $H$ that traverses each flag $(v,e)$ of $H$ exactly once. Hence, by Lemma~\ref{lem:W-W_G}, $W$ corresponds to a closed walk of $G$ that traverses each edge of $G$ exactly once, that is, an Euler tour of $G$; and vice-versa. The result follows by Euler's Theorem \cite{Eul}.
\end{proof}

\begin{cor}
Let $H=(V,E)$ be a hypergraph such that $\deg_H(v)$ and $|e|$ are even for all $v \in V$ and $e \in E$. Then $H$ is admits a collection of cycles such that each flag of $H$ is an anchor flag of exactly one of these cycles.
\end{cor}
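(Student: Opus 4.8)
The plan is to transfer the problem to the incidence graph $G=\G(H)$ and invoke the classical fact that a graph in which every vertex has even degree decomposes into cycles. First I would record that the hypotheses make $G$ an even graph: in $G$, a $v$-vertex $v\in V$ has degree $\deg_H(v)$ and an $e$-vertex $e\in E$ has degree $|e|$, both even by assumption (an empty edge simply contributes an isolated, hence even, vertex). Note that, unlike in Theorem~\ref{the:Etour}, we do not assume $H$ connected, so it is cleanest to use the cycle-decomposition theorem directly rather than Euler's theorem; the decomposition holds component by component and hence for $G$ as a whole.

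Next I would apply the theorem (Veblen's theorem; see \cite{BonMur}, or argue by the standard induction on $|E_G|$: an even graph with an edge contains a cycle, delete its edges, and the graph stays even) to write $G=C_1\oplus\cdots\oplus C_k$ with each $C_i$ a cycle of $G$. Since $G$ is a simple bipartite graph with bipartition $\{V,E\}$, each $C_i$ has even length at least $4$ and alternates between $v$-vertices and $e$-vertices; choosing a $v$-vertex as its start, I write $C_i$ as $v_0 e_1 v_1 \cdots v_{\ell-1} e_\ell v_0$ with $\ell\ge 2$, the $v_j$ pairwise distinct, and the $e_j$ pairwise distinct. By Lemma~\ref{lem:W-W_G}(2), the sequence $T_i=v_0 e_1 v_1 \cdots v_{\ell-1} e_\ell v_0$ is then a cycle of $H$.

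Finally I would carry out the anchor-flag bookkeeping. The anchor flags of $T_i$ are $(v_0,e_1),(v_1,e_1),(v_1,e_2),\ldots,(v_{\ell-1},e_\ell),(v_0,e_\ell)$, which under the identification $E_G=\{ve:(v,e)\in F(H)\}$ are precisely the $2\ell$ edges of $C_i$ in $G$. Since $\{E(C_i):1\le i\le k\}$ partitions $E_G=F(H)$, the family $\{T_1,\ldots,T_k\}$ is a collection of cycles of $H$ in which each flag of $H$ is an anchor flag of exactly one. There is no serious obstacle here: the only points needing a little care are choosing each $C_i$ to begin at a $v$-vertex so that Lemma~\ref{lem:W-W_G} applies, and the observation that the edge set of a cycle in $G$ matches the anchor-flag multiset of the corresponding cycle in $H$; the real content is just the cycle-decomposition theorem for even graphs.
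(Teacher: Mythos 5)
Your argument is correct and follows essentially the same route as the paper's proof: observe that the incidence graph $\G(H)$ is even, decompose it into cycles by Veblen's Theorem, transfer each cycle back to a cycle of $H$ via Lemma~\ref{lem:W-W_G}, and note that the edges of a cycle in $\G(H)$ are exactly the anchor flags of the corresponding cycle in $H$. Your version just spells out the bookkeeping (starting each cycle at a v-vertex, the length being at least $4$) that the paper leaves implicit.
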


\begin{proof}
With the assumptions of the corollary, the incidence graph $G=\G(H)$ is even, and hence is by Veblen's Theorem \cite{Veb} an edge-disjoint union of cycles. Each cycle $C_G$ of $G$ corresponds to a cycle $C_H$ in $H$ by Lemma~\ref{lem:W-W_G}, and the edges of $C_G$ correspond to the anchor flags of $C_H$. Hence $H$ admits a collection of cycles such that each flag of $H$ is an anchor flag of exactly one of them.
\end{proof}

Note that the above corollary does not claim that $H$ admits a decomposition into cycles; compare Theorem~\ref{the:CD}.

Since hypergraphs with a flag-traversing tour are completely characterized in Theorem~\ref{the:Etour}, we shall focus on  Euler tours and Euler families for the rest of the paper. We hence define the following terms.

\begin{defn}{\rm
A hypergraph is called {\em eulerian} if it admits an Euler tour, and {\em quasi-eulerian} if it admits an Euler family. }
\end{defn}

Clearly, every eulerian hypergraph is also quasi-eulerian, but as we shall see soon, the converse does not hold.

%%%%%%%%%%%%%%%%
% Main results %
%%%%%%%%%%%%%%%%

\section{Eulerian and quasi-eulerian hypergraphs}\label{sec:main}

All hypergraphs in this section are assumed to have no empty edges.

%%%%%%%%%%%%%%%%%%%%%%%%
% Necessary conditions %
%%%%%%%%%%%%%%%%%%%%%%%%

\subsection{Examining the necessary conditions}\label{sec:NC}

Lonc and Naroski \cite{LonNar} determined the following necessary conditions for a ($k$-uniform) hypergraph to be eulerian. We extend their observation to general quasi-eulerian hypergraphs, and since the two conditions are equivalent, we shall refer to them in the singular.

\begin{lemma}\label{lem:NecCond}
Let $H=(V,E)$ be a quasi-eulerian hypergraph, and $V_{odd}$ the set of odd-degree vertices in $H$. Then
\begin{equation}\label{eq1}
|E| \le \sum_{v \in V} \lfloor \frac{\deg_H(v)}{2} \rfloor
\end{equation}
and
\begin{equation}\label{eq2}
|V_{odd}| \le \sum_{e \in E} (|e|-2).
\end{equation}
Moreover, the two inequalities are equivalent.
\end{lemma}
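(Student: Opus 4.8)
The plan is to prove each inequality separately from the existence of an Euler family, and then show the two inequalities are equivalent by a direct counting identity that does not use the Euler family at all.

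First I would establish \eqref{eq1}. Let $\mathcal{F} = \{T_1, \ldots, T_k\}$ be an Euler family of $H$, and consider a single closed strict trail $T_i$ with anchor sequence $v_0 e_1 v_1 \cdots e_{\ell} v_{\ell}$ (with $v_0 = v_{\ell}$). Since $T_i$ is closed and traverses $\ell$ edges, it visits $\ell$ anchors (counting $v_0 = v_\ell$ once), and each time the trail passes through an anchor $v$ it "uses up" two of the edges incident with $v$ in $T_i$ — the one before and the one after. More precisely, for each trail $T_i$ and each vertex $v$, the number of edges of $T_i$ incident with $v$ that are traversed by $T_i$ at an occurrence of $v$ as an anchor is at least twice the number of times $v$ occurs as an anchor of $T_i$; summing the edges of $T_i$ over its anchors therefore shows that $T_i$ has at most $\sum_{v} \lfloor \deg_{T_i}(v)/2 \rfloor$ edges, where $\deg_{T_i}(v)$ counts edges of $T_i$ incident with $v$. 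Since the trails are anchor-disjoint and the edge sets partition $E$, summing over $i$ and using $\sum_i \lfloor a_i/2 \rfloor \le \lfloor (\sum_i a_i)/2 \rfloor$ together with $\sum_i \deg_{T_i}(v) \le \deg_H(v)$ gives \eqref{eq1}. (One must be slightly careful that an edge $e$ incident with $v$ but not "used" by $T_i$ at $v$ — possible since an edge may have size $>2$ and be traversed between two vertices different from $v$ — only helps the inequality, since such edges contribute to $\deg_{T_i}(v)$ without being forced.) Cleanest is probably to pass to the incidence graph via Lemma~\ref{lem:W-W_G}: each $T_i$ corresponds to a closed trail in $\G(H)$, these are edge-disjoint (by anchor-disjointness and since edges of $\G(H)$ are flags), and at each $v$-vertex $v$ a closed trail uses an even number of incident edges of $\G(H)$; hence the number of $e$-vertices covered, which is $|E(T_i)|$, is bounded using the degrees in $\G(H)$, i.e. the $|e|$'s and $\deg_H(v)$'s.

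Next, \eqref{eq2}: I claim $|E| \le \sum_{v\in V}\lfloor \deg_H(v)/2\rfloor$ is \emph{equivalent} (as an arithmetic statement about any hypergraph without empty edges) to $|V_{odd}| \le \sum_{e\in E}(|e|-2)$, so the second follows from the first with no further appeal to the Euler family. Write $d_v = \deg_H(v)$ and use $\lfloor d_v/2\rfloor = (d_v - \epsilon_v)/2$ where $\epsilon_v = 1$ if $v \in V_{odd}$ and $0$ otherwise, so $\sum_v \lfloor d_v/2\rfloor = \tfrac12\big(\sum_v d_v - |V_{odd}|\big)$. By the handshake identity for hypergraphs, $\sum_{v\in V} d_v = \sum_{e\in E} |e| = \sum_{v} \deg_H(v)$, and $\sum_{e}|e| = \sum_e (|e|-2) + 2|E|$. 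Substituting,
\[
\sum_{v\in V}\left\lfloor \frac{d_v}{2}\right\rfloor - |E| \;=\; \frac12\left(\sum_{e\in E}(|e|-2) + 2|E| - |V_{odd}|\right) - |E| \;=\; \frac12\left(\sum_{e\in E}(|e|-2) - |V_{odd}|\right).
\]
Thus $|E| \le \sum_v \lfloor d_v/2\rfloor$ holds if and only if $|V_{odd}| \le \sum_{e\in E}(|e|-2)$, which simultaneously proves the second inequality and the equivalence claim.

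The main obstacle is the first step — making the "each passage through an anchor consumes two incident edges" argument fully rigorous, in particular handling vertices that are incident with an edge of the trail without that edge being traversed "at" the vertex. Routing everything through the incidence graph $\G(H)$ and Euler's / the handshake lemma there sidesteps this: in $\G(H)$ a closed trail is just a closed trail, it uses an even number of edges at each vertex it visits, and the bookkeeping becomes the standard graph statement that a closed trail with $t$ edges at $e$-vertices has those $t$ edges distributed among the $v$-vertices with an even number at each. After that, the algebra in the last two steps is routine, so I would not belabor it.
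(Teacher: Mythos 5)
Your proposal is correct and follows essentially the same route as the paper: inequality \eqref{eq1} is obtained by counting anchor occurrences (each occurrence of $v$ consumes two distinct incident edges, and the total number of anchor occurrences over the family equals $|E|$), and the equivalence with \eqref{eq2} is the same computation $\sum_{v}\lfloor \deg_H(v)/2\rfloor=\tfrac12\bigl(\sum_{e}|e|-|V_{odd}|\bigr)$ via the handshake identity. The paper's bookkeeping is marginally cleaner --- it bounds $\sum_{T}m_T(v)\le\lfloor\deg_H(v)/2\rfloor$ directly rather than summing floors per trail --- but this is a cosmetic difference, not a different argument.
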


\begin{proof}
First, we show Inequality~(\ref{eq1}). Let $\cal F$ be an Euler family of $H$, and $T$ any closed strict trail in $\cal F$. For all $u \in V$, let $m_T(u)$ denote the number of times $u$ is traversed on $T$ as an anchor vertex. (Here, the endpoints of the trail together count as one traversal.) Since vertices and edges alternate along $T$, and each edge of $H$ is traversed exactly once by a trail in $\cal F$, we have $\sum_{T \in {\cal F}}\sum_{v \in V} m_T(v) = |E|$. Clearly, for all $v \in V$, we have $\sum_{T \in {\cal F}}m_T(v) \le \lfloor \frac{\deg_H(v)}{2} \rfloor$. Hence, as claimed,
$$\sum_{v \in V} \lfloor \frac{\deg_H(v)}{2} \rfloor \ge \sum_{v \in V} \sum_{T \in {\cal F}} m_T(v) = \sum_{T \in {\cal F}}\sum_{v \in V} m_T(v) =|E| .$$

It now suffices to show that Inequalities~(\ref{eq1}) and (\ref{eq2}) are equivalent. Observe that
\begin{eqnarray*}
\sum_{v \in V} \lfloor \frac{\deg_H(v)}{2} \rfloor  &=& \sum_{v \in V_{odd}} \frac{\deg_H(v)-1}{2} + \sum_{v \in V-V_{odd}} \frac{\deg_H(v)}{2} \\
&=& \frac{1}{2} \left( \sum_{v\in V} \deg_H(v) - |V_{odd}| \right)=\frac{1}{2} \left( \sum_{e\in E} |e| - |V_{odd}| \right).
\end{eqnarray*}
Hence $\sum_{v \in V} \lfloor \frac{\deg_H(v)}{2} \rfloor \ge |E|$ if and only if
$\sum_{e \in E} |e| - |V_{odd}| \ge 2|E|$, that is, if and only if $|V_{odd}| \le \sum_{e \in E} (|e|-2)$.
\end{proof}

We remark that a hypergraph with a single edge cannot be quasi-eulerian, both by the definition of a closed walk, as well as by Lemma~\ref{lem:NecCond}.

\begin{quest}\label{quest1}{\rm
Is the necessary condition in Lemma~\ref{lem:NecCond} also sufficient?
}
\end{quest}

The answer to Question~\ref{quest1} is positive for graphs: if $G=(V,E)$ is a graph, then $\sum_{e \in E} (|e|-2)=0$, and  if $G$ satisfies the condition in Lemma~\ref{lem:NecCond}, then $G$ has no vertices of odd degree, and every connected component of $G$ has an Euler tour. Hence $G$ is quasi-eulerian.

For hypergraphs, however, it is easily seen that in general the answer to the above question is negative. We shall now present some counterexamples, starting with the more trivial ones. We first state the most obvious limiting property, which follows straight from the definition of a walk.

\begin{lemma}\label{lem:CE-0}
A quasi-eulerian hypergraph has no edge of cardinality less than two.
\end{lemma}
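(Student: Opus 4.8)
The plan is to unwind the definitions, since the obstruction is essentially syntactic: an edge of cardinality less than two can never appear as one of the $e_i$ in any walk. First I would recall that if $H$ is quasi-eulerian, then by definition it admits an Euler family $\mathcal{F} = \{T_1, \ldots, T_k\}$, where each $T_j$ is a closed strict trail and every edge of $H$ lies in exactly one $T_j$. In particular, each edge $e \in E$ occurs as some $e_i$ in the defining sequence $v_0 e_1 v_1 \ldots v_{k-1} e_k v_k$ of one of these trails.

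Next I would invoke the definition of a walk: whenever $e_i$ appears in a $(u,v)$-walk, the vertices $v_{i-1}$ and $v_i$ are required to be adjacent via $e_i$, which by the definition of adjacency forces $v_{i-1}$ and $v_i$ to be \emph{distinct} vertices both lying in $e_i$. Consequently $\{v_{i-1}, v_i\} \subseteq \psi(e_i)$ with $v_{i-1} \ne v_i$, so $|e_i| \ge 2$. Applying this to the occurrence of an arbitrary edge $e$ within its trail in $\mathcal{F}$ yields $|e| \ge 2$, completing the argument.

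There is essentially no obstacle here; the lemma is an immediate consequence of the fact that adjacent vertices are by convention distinct, as already noted in the preliminaries just after the definition of a walk. The only thing to be careful about is to phrase it in terms of the Euler \emph{family} rather than an Euler tour, so that it applies to the full class of quasi-eulerian hypergraphs (including disconnected ones) and not merely to eulerian ones; but since every edge still must be traversed by some closed strict trail, the same reasoning goes through verbatim.
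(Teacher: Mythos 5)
Your proof is correct and matches the paper's intent: the paper gives no explicit proof, noting only that the lemma ``follows straight from the definition of a walk,'' which is precisely the observation you unwind (every edge in an Euler family is traversed between two distinct adjacent vertices, forcing cardinality at least two). Nothing further is needed.
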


\begin{example}{\rm
Fix any $n \ge 3$, and let $V=\{ v_1, v_2,\ldots,v_n \}$ and $E=\{ e_1, e_2,\ldots,e_n \}$, where $e_1=\{ v_1 \}$, $e_2=\{ v_1, v_2, v_n \}$, and $e_i=\{ v_{i-1},v_{i} \}$ for $i=3,\dots,n$.
Then $H=(V,E)$ is 2-regular, whence $\sum_{v \in V} \lfloor \frac{\deg(v)}{2} \rfloor = n=|E|$. Thus $H$ satisfies the necessary conditions in Lemma~\ref{lem:NecCond}. However, $H$ is not quasi-eulerian since $e_1$ is an edge of cardinality 1.
}
\end{example}

\begin{lemma}\label{lem:CE-1}
Every edge of a quasi-eulerian hypergraph contains at least two vertices that are not pendant.
\end{lemma}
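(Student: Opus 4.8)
The plan is to reduce the statement to a single structural fact about closed strict trails: \emph{every vertex that occurs as an anchor of a closed strict trail has degree at least two in the hypergraph}, hence is not pendant. Granting this, the lemma follows quickly. Let $H=(V,E)$ be quasi-eulerian, let ${\cal F}$ be an Euler family of $H$, and let $e\in E$. By definition $e$ is traversed by exactly one closed strict trail $T=v_0e_1v_1\ldots e_kv_k$ of ${\cal F}$; say $e=e_i$. Then $v_{i-1},v_i\in e$, and $v_{i-1}\ne v_i$ since no two consecutive vertices of a walk are the same. As $v_{i-1}$ and $v_i$ are anchors of $T$, the structural fact gives that neither is pendant, so $e$ contains two non-pendant vertices, as claimed.

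It remains to prove the structural fact. Let $T=v_0e_1v_1\ldots e_kv_k$ be a closed strict trail in a hypergraph $H$, so $k\ge 2$ and $v_0=v_k$, and let $v_j$ be one of its anchors. Reading indices cyclically and setting $e_0:=e_k$, the definition of a walk tells us that $v_j$ is adjacent to $v_{j-1}$ via $e_j$ and to $v_{j+1}$ via $e_{j+1}$, so $v_j\in e_j\cap e_{j+1}$. Because $T$ is a strict trail, $e_1,\ldots,e_k$ are pairwise distinct, and since $k\ge 2$ this forces $e_j\ne e_{j+1}$; hence $v_j$ lies in at least two distinct edges of $H$ and $\deg_H(v_j)\ge 2$. (Alternatively, one may translate to the incidence graph: by Lemma~\ref{lem:W-W_G}, $T$ corresponds to a closed trail in $\G(H)$, and passing through the v-vertex $v_j$ uses two distinct edges of $\G(H)$, which, as $\G(H)$ is simple and bipartite, end at two distinct e-vertices; thus $\deg_H(v_j)\ge 2$.)

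I do not expect a genuine obstacle. The only delicate point is the cyclic bookkeeping at the repeated endpoint $v_0=v_k$, where one must verify that the wrap-around edges $e_k$ and $e_1$ are genuinely distinct --- which is exactly where the hypotheses $k\ge 2$ and ``strict trail'' enter. Everything else is an unwinding of the definitions of walk, anchor, strict trail, and pendant vertex.
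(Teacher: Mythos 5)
Your proposal is correct and follows essentially the same route as the paper's proof: the paper likewise observes that an anchor vertex of a closed (strict) trail, which necessarily traverses at least two distinct edges, must have degree at least 2, and that each edge of a quasi-eulerian hypergraph lies in such a trail and hence contains two distinct anchors. You have merely spelled out the cyclic bookkeeping that the paper leaves implicit.
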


\begin{proof}
An anchor vertex of a closed trail (necessarily traversing at least two edges)  must have degree at least 2. If a hypergraph is quasi-eulerian, every edge lies in a closed trail, and hence every edge contains at least 2 vertices that are not pendant.
\end{proof}

\begin{example}{\rm
Fix any $n \ge 7$, and let $V=\{ v_1, v_2,\ldots,v_n \}$ and $E=\{ e_1, e_2,\ldots,e_n \}$, where $e_1=\{ v_1,v_2,v_3 \}$, $e_i=\{ v_{i+1},v_{i+2},v_{i+3} \}$ for $i=2,\dots,n-3$, $e_{n-2}=\{ v_3,v_{n-1},v_{n} \}$, $e_{n-1}=\{ v_4,v_{n-1},v_{n} \}$, $e_{n}=\{ v_5,v_{n-1},v_{n} \}$.
Then $H=(V,E)$ has no isolated vertices, has exactly two vertices of degree 1 (namely, $v_1$ and $v_2$), and at least two vertices of degree at least 4. It follows that  $\sum_{v \in V} \lfloor \frac{\deg(v)}{2} \rfloor \ge n=|E|$. Thus $H$ satisfies the necessary conditions in Lemma~\ref{lem:NecCond}. However, since $e_1$ is an edge with exactly one non-pendant vertex, $H$ is not quasi-eulerian by Lemma~\ref{lem:CE-1}.
}
\end{example}

Observe that if $V'$ is the set of pendant vertices in a hypergraph $H$, then $H$ is quasi-eulerian (eulerian) if and only if $H[V-V']$ is. Hence we shall now look for counterexamples to the sufficiency of the condition in Lemma~\ref{lem:NecCond} among hypergraphs without pendant vertices (and of course, without edges of cardinality less than 2).

\medskip

A {\em cut edge} in a hypergraph $H=(V,E)$ is an edge $e \in E$ such that $\cc(H-e) > \cc(H)$. A graph with a cut edge obviously does not admit an Euler tour. The issue is more complex for hypergraphs.  First, we need to distinguish between two types of cut edges. As we showed in \cite[Lemma 3.15]{BahSaj}, if $e$ is a cut edge in a hypergraph  $H=(V,E)$, then $\cc(H-e) \le \cc(H)+|e|-1$. A cut edge that achieves the upper bound in this inequality is called {\em strong}; all other cut edges are called {\em weak}. Observe that a cut edge has cardinality at least two, and that any cut edge of cardinality two (and hence any cut edge in a graph) is necessarily strong.

\begin{theo}\label{the:cut-edges}
Let $H=(V,E)$ be a hypergraph with a cut edge $e$.
\begin{enumerate}
\item If $e$ is a strong cut edge, then $H$ is not quasi-eulerian.
\item If $H-e$ has at least two non-trivial connected components, then $H$ is not eulerian.
\end{enumerate}
\end{theo}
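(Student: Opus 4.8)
The plan is to reason about how a closed strict trail, or a family of such trails, interacts with the "bridge" created by deleting the cut edge $e$. Throughout, set $k = |e|$ and let $C_1, \dots, C_t$ be the connected components of $H - e$. Since $e$ is a cut edge, $t \ge \cc(H) + 1$; the key structural fact from \cite[Lemma 3.15]{BahSaj} is that $t \le \cc(H) + k - 1$, and $e$ being \emph{strong} means equality holds, i.e.\ $t = \cc(H) + k - 1$. The first step is to understand what this says about the distribution of the vertices of $e$ among the components of $H - e$: each component of $H$ other than the one(s) containing $e$ survives deletion of $e$, so the "extra" $k - 1$ components all arise from splitting the single component of $H$ that contains $e$. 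For that component, the vertices of $e$ must be spread across exactly $k$ of the components $C_i$ (one vertex of $e$ per new component, with the bookkeeping that $k$ components are glued by $e$ but only $k-1$ are "new"). In other words, when $e$ is strong, each vertex of $e$ lies in a distinct component of $H - e$, and deleting $e$ disconnects these $k$ vertices pairwise.

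For part (1), I would argue by contradiction: suppose $H$ is quasi-eulerian with Euler family $\mathcal{F}$, and let $T \in \mathcal{F}$ be the (unique) closed strict trail containing $e$. Write $T$ as $v_0 e_1 v_1 \dots e_j v_j \dots$ where some $e_{j} = e$, so $T$ uses $e$ to go from its anchor $v_{j-1} \in e$ to its anchor $v_j \in e$. Deleting the single step "$e_j = e$" from the cyclic sequence $T$ leaves a closed strict trail (or walk) in $H - e$ from $v_j$ back to $v_{j-1}$ using only edges of $H - e$; this exhibits a $(v_j, v_{j-1})$-walk in $H - e$. But $v_{j-1}$ and $v_j$ are two distinct vertices of $e$ (distinct because consecutive anchors of a walk are distinct), and since $e$ is strong they lie in different components of $H - e$ — contradiction. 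So no such $T$ can exist, and $H$ is not quasi-eulerian.

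For part (2), suppose for contradiction that $H$ has an Euler tour $T$, and suppose $H - e$ has at least two non-trivial components, say $D_1$ and $D_2$ (each containing at least one edge). The Euler tour $T$ traverses \emph{every} edge of $H$, in particular all edges of $D_1$ and all edges of $D_2$, as well as $e$ — and it does so in one single closed strict trail. Now consider removing the unique occurrence of the step through $e$ from the cyclic sequence $T$: this breaks $T$ into a single (open) strict trail $T'$ in $H - e$ whose two endpoints are the two anchor vertices of $e$ on $T$; every edge of $H - e$ appears on $T'$. But $T'$ is a walk, hence connected, so all its edges lie in a single component of $H - e$; since $D_1$ and $D_2$ each contain an edge of $T'$, this forces $D_1$ and $D_2$ to be the same component, contradicting that they are distinct. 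Hence $H$ is not eulerian. (One should note in passing that part (1) does \emph{not} subsume part (2): a weak cut edge need not be strong, so part (1) may fail to apply, yet the weaker connectivity obstruction in part (2) still rules out an Euler tour.)

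The only genuinely delicate point — and the one I would write out most carefully — is the structural claim underpinning part (1): that a \emph{strong} cut edge $e$ separates its own vertices pairwise in $H - e$. I expect this to follow by a clean counting argument from \cite[Lemma 3.15]{BahSaj}: deleting $e$ from $H$ can only merge-via-$e$ at most $|e|$ components back together, so to create $|e| - 1$ \emph{new} components the vertices of $e$ must sit in $|e|$ mutually distinct components of $H - e$; if any two vertices of $e$ shared a component of $H - e$, we would gain at most $|e| - 2$ new components, contradicting strength. Everything else (surgery on the cyclic trail, connectedness of a walk, distinctness of consecutive anchors via the definition of adjacency) is routine and uses only Lemma~\ref{lem:W-W_G} and basic walk properties already established. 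The main obstacle, then, is purely in pinning down this combinatorial consequence of "strong"; once that is in hand, both parts are short contradiction arguments of the same flavour.
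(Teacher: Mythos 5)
Your argument is correct, and the underlying mechanism in both parts --- surgery on the (closed strict) trail through $e$, followed by a disconnection contradiction in $H-e$ --- is the same one the paper uses. The differences are presentational but worth noting. For part (1), the paper simply observes that an edge on a closed strict trail lies on a cycle and then cites \cite[Theorem 3.18]{BahSaj} (``no strong cut edge lies in a cycle''), whereas you re-derive the essential content of that theorem inline: your counting argument that strength of $e$ forces the $|e|$ vertices of $e$ into $|e|$ pairwise distinct components of $H-e$ is sound (adding $e$ back merges exactly the components of $H-e$ meeting $e$, so $\cc(H-e)=\cc(H)+r-1$ where $r$ is the number of such components, and $r=|e|$ iff the bound is attained), and the trail surgery then produces a $(v_j,v_{j-1})$-walk in $H-e$ between two \emph{distinct} (by the definition of adjacency) vertices of $e$, which is the desired contradiction. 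This buys self-containedness at the cost of a paragraph. For part (2), the paper passes to the incidence graph $\G(H)$, uses the correspondence of Lemma~\ref{lem:W-W_G} and the fact that $e$ is a cut e-vertex of $\G(H)$, and derives the contradiction there; you work directly in $H$, deleting the unique step through $e$ to obtain an open strict trail covering all of $E-\{e\}$, whose edges must then lie in a single component of $H-e$. The two are equivalent; yours avoids invoking the cited facts about cut e-vertices of the incidence graph, at the price of having to argue (correctly, as you do) that all edges of a walk lie in one component.
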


\begin{proof}
\begin{enumerate}
\item Assume that $e$ is a strong cut edge. If $H$ admits an Euler family, then $e$ lies in a closed strict trail, and consequently in a cycle of $H$. However, by \cite[Theorem 3.18]{BahSaj}, no strong cut edge lies in a cycle ---  a contradiction. Hence $H$ is not quasi-eulerian.
\item Assume that $H-e$ has at least two non-trivial connected components. Since $e$ is a cut edge of $H$, it is a cut e-vertex in its incidence graph $G=\G(H)$ \cite[Theorem 3.23]{BahSaj}, and the connected components of $G\b e$ are the incidence graphs of the connected components of $H-e$ \cite[Lemma 2.8 and Corollary 3.12]{BahSaj}. Hence, by assumption,  $G\b e$ has at least two connected components with e-vertices.

    Suppose $H$ has an Euler tour. By Lemma~\ref{lem:W-W_G}, $G$ has a closed trail $T$ traversing each e-vertex (including $e$) exactly once. Hence $T\b e$ is a trail that traverses every e-vertex in $G\b e$, contradicting the above. Thus $H$ is not eulerian.
\end{enumerate}
\vspace{-10mm}
\end{proof}

\begin{center}
\begin{figure}[t]
\centerline{\includegraphics[scale=0.5]{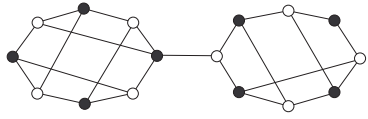}}
\caption{The incidence graph of a 3-uniform quasi-eulerian hypergraph with a cut edge.  (Black dots represent the v-vertices.)}\label{fig:pic4}
\end{figure}
\end{center}

Note that if a hypergraph has no strong cut edges, then it may or may not be quasi-eulerian; an example of each kind is given in Figures~\ref{fig:pic4} and \ref{fig:pic5}, respectively. Both of these examples have weak cut edges and satisfy the necessary condition from Lemma~\ref{lem:NecCond}; they are also 3-uniform.

\begin{center}
\begin{figure}[t]
\centerline{\includegraphics[scale=0.5]{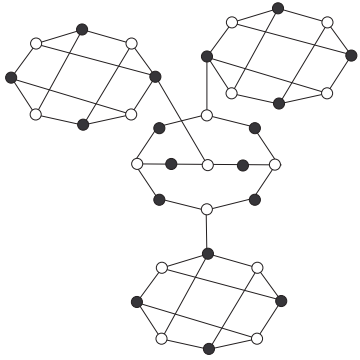}}
\caption{The incidence graph of a 3-uniform hypergraph with cut edges that is not quasi-eulerian.  (Black dots represent the v-vertices.)} \label{fig:pic5}
\end{figure}
\end{center}

As we saw in Theorem~\ref{the:cut-edges}, a cut edge may prevent a hypergraph from admitting an Euler family. We now give a counterexample to the sufficiency of the condition in Lemma~\ref{lem:NecCond} that has no cut edges. It can be easily generalized to give an infinite family of such hypergraphs.

\begin{example}\label{count:2}{\rm
Let  $H=(V,E)$ be a hypergraph whose incidence graph is shown in Figure~\ref{fig:counter2}. Then $H$ has no cut edges and no Euler family, yet it satisfies the necessary condition  from Lemma~\ref{lem:NecCond}.
}
\end{example}

\begin{center}
\begin{figure}[b]
\centerline{\includegraphics[scale=0.5]{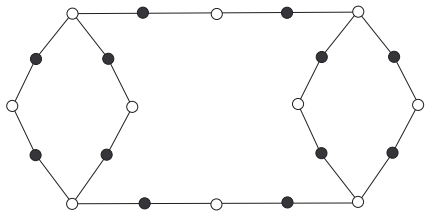}}
\caption{The incidence graph of a hypergraph without cut edges that satisfies the necessary condition from Lemma~\ref{lem:NecCond} but is not quasi-eulerian. (Black dots represent the v-vertices.)}\label{fig:counter2}
\end{figure}
\end{center}

Note that the previous counterexample contains edges of cardinality 2; Stamplecoskie \cite{Sta} recently showed that for every $c \ge 2$ and $m \ge5$,  there exists a connected hypergraph of corank $c$, size $m$, and without cut edges that satisfies the necessary condition from Lemma~\ref{lem:NecCond} but is not quasi-eulerian. However, in Theorem~\ref{the:3-uniform} we shall see that every 3-uniform hypergraph without cut edges is quasi-eulerian. Hence the following  question.

\begin{quest}{\rm
Does there exist a connected $k$-uniform hypergraph (for $k \ge 4$) with no cut edges that satisfies the necessary condition from Lemma~\ref{lem:NecCond} but  is not quasi-eulerian? }
\end{quest}

The following example shows that Theorem~\ref{the:3-uniform} does not extend to eulerian hypergraphs; that is, not all 3-uniform hypergraphs without cut edges are eulerian.

\begin{example}\label{count:3}{\rm
Let  $H=(V,E)$ be a hypergraph whose incidence graph is shown in Figure~\ref{fig:counter3}. Observe that $H$ is 3-uniform and has no cut edges. Furthermore, it is quasi-eulerian but not eulerian, and it satisfies the necessary condition in Lemma~\ref{lem:NecCond}. Observe that its 2-intersection graph is disconnected  --- see Theorem~\ref{the:LonNar} below.
}
\end{example}

\begin{center}
\begin{figure}[t]
\centerline{\includegraphics[scale=0.5]{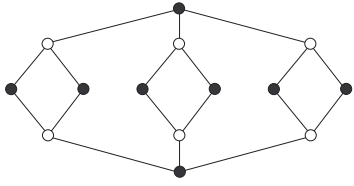}}
\caption{The incidence graph of a 3-uniform hypergraph without cut edges that is quasi-eulerian but not eulerian.  (Black dots represent the v-vertices.)}\label{fig:counter3}
\end{figure}
\end{center}

To conclude this section, we shall present a new class of hypergraphs for which the necessary condition from Lemma~\ref{lem:NecCond} is also sufficient, extending the following result by Lonc and Naroski.

\begin{theo}\cite{LonNar}\label{the:LonNar}
Let $k \ge 3$, and let $H=(V,E)$ be a $k$-uniform hypergraph with a connected $(k-1)$-intersection graph. Then $H$ is eulerian if and only if  $$\sum_{v \in V} \lfloor \frac{\deg_H(v)}{2} \rfloor \ge |E|.$$
\end{theo}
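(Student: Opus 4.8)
The plan is to reduce the problem to the incidence graph $G = \G(H)$ and build an Euler tour of $H$ by concatenating short trails, using the connectivity of $\L_{k-1}(H)$ to glue local pieces together. Since necessity is already established in Lemma~\ref{lem:NecCond}, the work is entirely in the sufficiency direction: assume $\sum_{v \in V} \lfloor \deg_H(v)/2 \rfloor \ge |E|$ and construct an Euler tour. First I would observe that, by Theorem~\ref{the:conn}, a $k$-uniform hypergraph with connected $(k-1)$-intersection graph is itself connected (any two edges meeting in $k-1$ vertices certainly share a vertex, so $\L_{k-1}(H)$ connected forces $\L(H)$ connected, hence $H$ connected), so we may work with a single connected structure. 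We may also assume there are no pendant vertices, since deleting them (passing to $H[V - V']$ as in the remark before the definition of a cut edge) changes neither the hypothesis nor the conclusion — though care is needed that this operation preserves $k$-uniformity, so one should instead keep pendant vertices but simply never route a trail through them.

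The core idea is the standard ``parity pairing plus splicing'' argument adapted to hypergraphs. For each vertex $v$, the inequality $\sum_v \lfloor \deg_H(v)/2 \rfloor \ge |E|$ says there is enough ``turning capacity'' at the vertices to account for all $|E|$ edge-traversals: each time a closed trail passes through an anchor vertex $v$ it consumes two flags at $v$, so $v$ can serve as an anchor at most $\lfloor \deg_H(v)/2 \rfloor$ times. The plan is to first show $H$ has an Euler \emph{family} $\mathcal{F} = \{T_1, \dots, T_r\}$ — this should follow from the weaker machinery, e.g. from the fact that $H$ has no strong cut edge (which one deduces from the inequality: a strong cut edge $e$ splits off $|e|$ components, forcing a large deficit in (\ref{eq1})) together with a decomposition-into-closed-trails argument on $G$. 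Then the heart of the proof is to merge the trails of $\mathcal{F}$ into a single closed strict trail. Two closed strict trails sharing a common anchor vertex can be concatenated immediately; the obstruction is a pair of trails that are ``far apart.'' Here is where connectedness of $\L_{k-1}(H)$ enters: if $T_i$ uses edge $e$ and $T_j$ uses edge $f$ with $|e \cap f| = k-1$, then $e$ and $f$ overlap in $k - 1 \ge 2$ vertices, so in particular $e$ and $f$ share a vertex $w$; one then reroutes $T_i$ and $T_j$ through $w$ to amalgamate them — but this requires that the trail can be rerouted so that it is anchored at a \emph{common} vertex of $e$ and $f$ rather than merely passing through $e$ and $f$ at possibly different vertices. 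The $k-1$ overlap (rather than a bare intersection) is exactly what gives the slack: with $k-1 \ge 2$ shared vertices there is room to choose the anchor.

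The precise merging step I would formulate as a lemma: if $\mathcal{F}$ is an Euler family and $e, f$ are edges with $|e \cap f| = k-1$ lying in distinct trails $T_i, T_f$ of $\mathcal{F}$, then there is an Euler family $\mathcal{F}'$ with $|\mathcal{F}'| < |\mathcal{F}|$. Granting this, connectedness of $\L_{k-1}(H)$ lets us chain such merges: as long as $|\mathcal{F}| \ge 2$, pick trails $T_i \ne T_j$; since $\L_{k-1}(H)$ is connected there is a path in it from some edge of $T_i$ to some edge of $T_j$, hence somewhere along that path two \emph{consecutive} edges (adjacent in $\L_{k-1}(H)$, i.e. with $(k-1)$-element intersection) lie in two different trails of $\mathcal{F}$, and we apply the lemma. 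Iterating terminates with $|\mathcal{F}| = 1$, i.e. an Euler tour. To prove the merging lemma itself one passes to $G = \G(H)$ via Lemma~\ref{lem:W-W_G}: $T_i$ corresponds to a closed trail in $G$ through the $e$-vertex $e$, and $T_j$ through the $f$-vertex $f$; using that $e$ and $f$ have $k-1$ common neighbours (= common $v$-vertices) in $G$, one can find a $v$-vertex $w \in e \cap f$ and a detour that reroutes one trail near $e$ and the other near $f$ so they both pass through $w$, then splice at $w$ — the $k-1 \ge 2$ condition ensures that even after $T_i$ occupies two flags at some vertices of $e \cap f$ there remains a free vertex of $e \cap f$ available to $T_j$, or more carefully, one can swap the roles of $e, f$ inside the trails so that a common anchor is forced.

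The main obstacle I expect is precisely this merging lemma — making the ``reroute through a common vertex'' operation rigorous while respecting that the trails are \emph{strict} (each edge used at most once globally) and that parity/anchor-disjointness is maintained. In the graph case this is trivial (a common anchor vertex always exists once the pieces touch), but for hypergraphs the anchors of a trail at an edge $e$ are specific vertices of $e$, and two trails can pass through the same edge-neighbourhood region of $G$ without sharing a $v$-vertex anchor; the $(k-1)$-intersection hypothesis is the tool that buys back a shared anchor, and the careful bookkeeping of which flags at the $k-1$ common vertices are already used by which trail is the delicate part. A secondary technical point is establishing the existence of the initial Euler family $\mathcal{F}$ from the inequality — this requires ruling out strong cut edges and then invoking a closed-trail decomposition of an appropriate even spanning subgraph of $G$, which is routine but must be stated carefully.
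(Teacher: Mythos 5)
First, a point of orientation: the paper does not actually prove this theorem --- it is quoted from \cite{LonNar} --- so the fair comparison is with the paper's machinery around it. For $k\ge 4$ the paper's route (Theorem~\ref{the:SuffCond} and Corollary~\ref{cor:SuffCond}) is quite different from yours: it never first produces an Euler family, but builds the tour directly by induction, deleting a leaf edge of a spanning arborescence of $\D_3(H)$ and rerouting the inductively obtained tour through the deleted edge. Your second phase (take an Euler family, and use connectivity of $\L_{k-1}(H)$ to find two adjacent edges in distinct trails and merge them) is sound and is essentially what the paper does for $k=3$ in the corollary following Theorem~\ref{the:3-uniform}: there, since each e-vertex has degree $2$ in the subgraph $G'$ of $\G(H)$ and the two edges have at least two common v-neighbours which cannot both serve both components (anchor-disjointness), one can swap two incidence-graph edges to fuse the components. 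That part of your plan would go through.

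The genuine gap is your first step: producing the initial Euler family. You assert it follows from the degree inequality plus the absence of strong cut edges ``together with a decomposition-into-closed-trails argument on $G$,'' calling this routine. It is not: the paper exhibits explicit counterexamples. Example~\ref{count:2} (Figure~\ref{fig:counter2}) is a hypergraph with \emph{no cut edges at all} that satisfies the inequality of Lemma~\ref{lem:NecCond} yet admits no Euler family, and Figure~\ref{fig:pic5} gives a $3$-uniform example with only weak cut edges that is likewise not quasi-eulerian; Stamplecoskie's result quoted in Subsection~\ref{sec:NC} shows such examples exist for every corank $c\ge 2$. So absence of strong cut edges plus the inequality simply does not yield an Euler family, and this is exactly where all the real work lives: for $k=3$ the paper gets the Euler family from Fleischner's theorem (Theorem~\ref{the:Fle} via Theorem~\ref{the:3-uniform}, requiring no cut edges of any kind), and for $k\ge 4$ it bypasses the Euler family entirely. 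A secondary error: your parenthetical claim that a strong cut edge forces a deficit in Inequality~(\ref{eq1}) is false (one can attach dense even clusters at each vertex of a strong cut edge to make the left side of (\ref{eq1}) as large as desired); what actually excludes strong cut edges under the theorem's hypotheses is the connectivity of $\L_{k-1}(H)$, since an edge sharing $k-1\ge 2$ vertices with another edge keeps at least two of its vertices in one component of $H-e$.
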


We propose the following two generalizations (Theorem~\ref{the:SuffCond} and Corollary~\ref{cor:SuffCond} below). The main idea of the proof is based on the proof of the above theorem  from \cite{LonNar} for $k\ge 4$.

For any hypergraph $H=(V,E)$, define a digraph $\D_3(H)$ as follows: its vertex set is $E$ and its arc set is $\{ (e,f): e,f \in E, |f-e|=1, |e \cap f|\ge 3 \}$. Recall that an {\em arborescence} is a directed graph whose underlying undirected graph is a tree, and whose arcs are all directed towards a root.

\begin{theo}\label{the:SuffCond}
Let $H=(V,E)$ be a hypergraph such that its digraph $\D_3(H)$ has a spanning subdigraph that is a vertex-disjoint union of non-trivial arborescences. Then $H$ is quasi-eulerian.
\end{theo}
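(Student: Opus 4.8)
The plan is to construct, for each non-trivial arborescence $A$ in the given spanning subdigraph of $\D_3(H)$, a \emph{single} closed strict trail $T_A$ that traverses exactly the edges of $A$. Since the arborescences are vertex-disjoint and together span $\D_3(H)$, their edge sets partition $E(H)$; hence $\{T_A\}$ is a family of closed strict trails in which each edge of $H$ lies in exactly one trail. By the remark following the definition of an Euler family, such a family can be replaced by one that is moreover pairwise anchor-disjoint, so $H$ is quasi-eulerian. (There is no trouble with small edges: every edge occurring in a non-trivial arborescence is an endpoint of an arc of $\D_3(H)$ and hence has at least $3$ vertices.)

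The key tool is the following absorption step. Suppose $T$ is a closed strict trail that traverses an edge $g$ but not an edge $e$, where $|g\cap e|\ge 3$ and $|g-e|\le 1$. Writing the traversal of $g$ in $T$ as a segment $a\,g\,b$, the condition $|g-e|\le 1$ forces at least one of the (distinct) vertices $a,b$ to lie in $e$; reversing $T$ if necessary, assume $b\in e$. Since $|g\cap e|\ge 3$ we may choose $z\in (g\cap e)\setminus\{a,b\}$. Replacing the segment $a\,g\,b$ in $T$ by $a\,g\,z\,e\,b$ then produces a closed strict trail $T'$: it is a valid walk because $a,z\in g$ with $a\ne z$ and $z,b\in e$ with $z\ne b$, and its edge set is exactly that of $T$ with $e$ adjoined, so the edges remain pairwise distinct.

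To build $T_A$, let $r$ be the root of $A$ and list the edges of $A$ as $r=g_1,g_2,\dots,g_t$ so that for each $i\ge 2$ the parent $p(g_i)$ of $g_i$ --- the head of the unique arc of $A$ leaving $g_i$ --- occurs among $g_1,\dots,g_{i-1}$ (for instance, a breadth-first order from $r$). As $A$ is non-trivial, $g_2$ is a child of $r$; since $(g_2,r)$ is an arc of $\D_3(H)$ we have $|g_1\cap g_2|\ge 3$, and picking distinct $a,b\in g_1\cap g_2$ yields the initial closed strict trail $T_2=a\,g_2\,b\,g_1\,a$, which traverses exactly $\{g_1,g_2\}$. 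For $i=3,\dots,t$ apply the absorption step to $T_{i-1}$ with $g=p(g_i)$ and $e=g_i$: this is legitimate because $T_{i-1}$ traverses exactly $g_1,\dots,g_{i-1}$ (which contains $p(g_i)$ but not $g_i$) and because the arc $(g_i,p(g_i))$ of $\D_3(H)$ gives $|g_i\cap p(g_i)|\ge 3$ and $|p(g_i)-g_i|=1$. Setting $T_A=T_t$ completes the construction.

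The one point requiring care --- and the reason the hypothesis speaks of arborescences \emph{directed towards a root} rather than arbitrary trees in $\D_3(H)$ --- is exactly the absorption step: one must splice $g_i$ into the current trail using only the two edges $p(g_i)$ and $g_i$, which is possible precisely when one of the two current anchor vertices of $p(g_i)$ lies in $g_i$. This would fail if $p(g_i)$ had several vertices outside $g_i$; but an arc $(g_i,p(g_i))$ of $\D_3(H)$ guarantees that $p(g_i)$ has exactly one such vertex, so among its two anchors at most one is ``bad'', and no invariant on the anchors needs to be tracked beyond the trivial one. Processing the edges from the root outward is what ensures that $p(g_i)$ (rather than some child of $g_i$) is the already-present neighbour when $g_i$ is handled. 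Everything else --- the initial two-edge trail, the disjointness of the arborescences, and the passage from a family satisfying (i) to a genuine Euler family --- is routine.
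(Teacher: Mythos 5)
Your proof is correct and takes essentially the same approach as the paper: the absorption step (splicing $e$ into the segment $a\,g\,b$ as $a\,g\,z\,e\,b$, using $|g-e|=1$ to guarantee one good anchor and $|g\cap e|\ge 3$ to supply $z$) is exactly the paper's rerouting $ufvW \mapsto ufwevW$. The only difference is organizational --- the paper runs an induction that deletes a leaf of an arborescence and splices it back, while you build each trail iteratively from the root outward --- and both handle the key point (repeated absorption at the same parent) correctly.
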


\begin{proof}
For convenience, we say that a digraph satisfies Property P if has a spanning subdigraph that is a vertex-disjoint union of non-trivial arborescences.
We shall prove by induction on the number of edges that every hypergraph $H$ whose  digraph $\D_3(H)$ satisfies Property P possesses an Euler family. Observe that such a hypergraph necessarily has at least two edges.

First, let $H=(V,E)$ be a hypergraph with $E=\{ e,f \}$ such that its  digraph $\D_3(H)$ satisfies Property P. Then $\D_3(H)$ must have a spanning arborescence. Moreover, we have that $|e \cap f| \ge 3$. Take any $u,v \in e \cap f$ such that $ u \ne v$. Then $T=uevfu$ is an Euler tour of $H$. Thus $H$ possesses an Euler family as claimed.

Assume that for some $m\ge 2$, every hypergraph $H$ with at least $m$ edges whose digraph $\D_3(H)$ satisfies Property P possesses an Euler family. Let $H=(V,E)$ be a hypergraph with $|E|=m+1$ such that its  digraph $\D_3(H)$ has a spanning subdigraph $D'$ that is a vertex-disjoint union of non-trivial arborescences. If each arborescence in $D'$ is of order 2, then (just as in the base case above) each gives rise to a closed strict trail of length 2 in $H$, and the union of all these trails is an Euler family in $H$.

Hence assume that $D'$ has a weakly connected component $A$ that is an arborescence of order at least 3. Let $e\in E$ be a leaf (that is, vertex of indegree 0 and outdegree 1) of $A$ and $f$ its outneighbour in $A$. Then $|f-e|=1$ and $|e \cap f| \ge 3$. Now $\D_3(H-e)$ has a spanning digraph $D' \setminus e$ that is a vertex-disjoint union of non-trivial arborescences. Hence by the induction hypothesis, the hypergraph $H-e$ possesses an Euler family $\cal F$. Let $T=ufvW$ ---  where $u$ and $v$ are distinct vertices in $f$, and $W$ is an appropriate $(v,u)$-walk --- be a closed strict trail in $\cal F$. We now reroute $T$ to include the edge $e$, resulting in a closed strict trail $T'$ of $H$, as follows.

Since $|f-e|=1$, at least one of $u$ and $v$ --- say $v$ without loss of generality --- is also in $e$, and since $|e \cap f| \ge 3$, there exists $w \in e \cap f$ such that $w \ne u,v$. Then  $T'=ufwevW$ is a closed strict trail of $H$. Finally, replace $T$ in $\cal F$ by $T'$ to obtain an Euler family of $H$.

The result follows by induction.
\end{proof}

With very minor changes to the above proof we obtain the following.

\begin{cor}\label{cor:SuffCond}
Let $H=(V,E)$ be a hypergraph such that its digraph $\D_3(H)$ has a non-trivial spanning arborescence. Then $H$ is eulerian.
\end{cor}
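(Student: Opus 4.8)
The plan is to run the same induction on $|E|$ as in the proof of Theorem~\ref{the:SuffCond}, with only cosmetic changes: replace ``Euler family'' by ``Euler tour'' throughout, and replace ``$\D_3(H)$ satisfies Property P'' by ``$\D_3(H)$ has a non-trivial spanning arborescence.'' As before, such a hypergraph automatically has $|E|\ge 2$. For the base case $|E|=2$, say $E=\{e,f\}$, a non-trivial spanning arborescence of $\D_3(H)$ exists only if $|e\cap f|\ge 3$, so choosing distinct $u,v\in e\cap f$ yields the Euler tour $T=uevfu$.

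For the inductive step, suppose $|E|=m+1$ and let $A$ be a non-trivial spanning arborescence of $\D_3(H)$. If $A$ has order $2$ we are in the base case, so assume $A$ has order at least $3$. Pick a leaf $e$ of $A$ (indegree $0$, outdegree $1$) with outneighbour $f$; then $|f-e|=1$ and $|e\cap f|\ge 3$. The key observation is that $A\setminus e$ is still an arborescence, now of order at least $2$, hence a non-trivial spanning arborescence of $\D_3(H-e)$; by the induction hypothesis $H-e$ has an Euler tour $T$. Writing $T=ufvW$ with $u,v$ distinct vertices of $f$ and $W$ an appropriate $(v,u)$-walk, use $|f-e|=1$ to assume without loss of generality $v\in e$, and $|e\cap f|\ge 3$ to choose $w\in e\cap f$ with $w\ne u,v$. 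Then $T'=ufwevW$ is a closed strict trail of $H$ traversing every edge of $H$ exactly once, i.e.\ an Euler tour of $H$, which completes the induction.

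The ``main obstacle'' here is genuinely minor, but it is the single place where the argument diverges from Theorem~\ref{the:SuffCond}: because we now have only one arborescence rather than a vertex-disjoint union of them, I must make sure that deleting the chosen leaf leaves a \emph{non-trivial} arborescence, so that the induction hypothesis still applies. This is why the case of an arborescence of order exactly $2$ has to be handled through the base case rather than the inductive step, and why the leaf must be selected from an arborescence of order at least $3$. Everything else --- that $T'$ is a strict trail, that it is closed, and that it traverses all edges of $H$ --- carries over verbatim from the proof of Theorem~\ref{the:SuffCond}.
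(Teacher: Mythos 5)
Your proof is correct and is exactly the argument the paper intends: the paper proves Corollary~\ref{cor:SuffCond} only by remarking that it follows from the proof of Theorem~\ref{the:SuffCond} ``with very minor changes,'' and you have carried out precisely those changes, correctly noting that the only point requiring care is that deleting a leaf from the single spanning arborescence must leave a non-trivial arborescence (which holds since a spanning arborescence of $\D_3(H)$ has order $|E|\ge 3$ in the inductive step).
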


Observe that Corollary~\ref{cor:SuffCond} extends  Theorem~\ref{the:LonNar} to a (much) larger family of hypergraphs, while Theorem~\ref{the:SuffCond} generalizes it to quasi-eulerian hypergraphs. Still, the sufficient conditions in Theorem~\ref{the:SuffCond} and Corollary~\ref{cor:SuffCond} are very strong, and the converses clearly do not hold.

%%%%%%%%%%%%%%%%%%%%%%
% Intersection graph %
%%%%%%%%%%%%%%%%%%%%%%

\subsection{Characterization using the intersection graphs}\label{sec:L(H)}

We shall now take another look at the intersection graphs of a hypergraph $H$ to determine some necessary and some sufficient conditions for $H$ to be eulerian or quasi-eulerian. The following observation will be an essential tool to establish the necessary conditions.

\begin{lemma}\label{lem:line}
Let $H=(V,E)$  be a hypergraph and $L=\L(H)$ its intersection graph. Furthermore, let $W=v_0 e_1 v_1 e_2 v_2 \ldots v_{k-1} e_k v_k$ be a walk in $H$ (so that all $v_i \in V$ and all $e_i \in E$), and let $W_L= e_1 e_2 \ldots e_k$ and $W_L^*= e_1 e_2 \ldots e_k e_1$. Then:
\begin{enumerate}
    \item $W_L$ is a walk in $L$.
    \item If $W$ is a strict trail, then $W_L$ is a path.
    \item If $W$ is a closed strict trail and $k \ge 3$, then $W_L^*$ is a cycle.
\end{enumerate}
\end{lemma}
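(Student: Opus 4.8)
The plan is to verify each of the three claims directly from the definitions, exploiting the fact that consecutive edges $e_i, e_{i+1}$ along a walk share the anchor vertex $v_i$, hence intersect, hence are adjacent in $L=\L(H)$ (provided they are distinct — which is where the trail hypotheses enter).

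First I would prove (1). For each $i = 1, \ldots, k-1$, we have $v_i \in e_i \cap e_{i+1}$, so $e_i \cap e_{i+1} \ne \emptyset$. If $e_i \ne e_{i+1}$, this means $e_i e_{i+1}$ is an edge of $L$; if $e_i = e_{i+1}$, then $e_i e_{i+1}$ is a "stationary step" which is permitted in a walk. Either way $W_L = e_1 e_2 \ldots e_k$ is a walk in $L$ (a walk being allowed to repeat vertices and to stay put). This is the easy case and needs only a sentence.

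Next, (2): suppose $W$ is a strict trail, so the edges $e_1, \ldots, e_k$ are pairwise distinct. Then in particular consecutive edges are distinct, so by the computation above each $e_i e_{i+1}$ is a genuine edge of $L$, making $W_L$ a walk; and since no vertex $e_i$ of $L$ is repeated, $W_L$ is in fact a path. For (3), assume $W$ is a closed strict trail with $k \ge 3$; then additionally $v_0 = v_k$, and I would argue that $e_k$ and $e_1$ are adjacent in $L$ via the vertex $v_0 = v_k$ (since $v_k \in e_k$ and $v_0 \in e_1$), so that $e_k e_1$ is an edge of $L$, provided $e_k \ne e_1$ — which holds because the edges of a strict trail are pairwise distinct. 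Combined with (2), the closed sequence $W_L^* = e_1 e_2 \ldots e_k e_1$ is a closed walk in $L$ with $e_1, \ldots, e_k$ pairwise distinct, i.e.\ a cycle; here the hypothesis $k \ge 3$ guarantees the length is at least $3$, so it is a genuine cycle rather than a degenerate one. (I would note in passing that for $k = 2$ the sequence $e_1 e_2 e_1$ is not a cycle, which is exactly why the hypothesis $k \ge 3$ is needed.)

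I do not anticipate a real obstacle here — the statement is essentially a bookkeeping exercise reading off the definition of adjacency in $\L(H)$ from the definition of a walk/trail in $H$. The only points requiring a moment's care are: (i) making sure the "stationary step" issue ($e_i = e_{i+1}$) is handled correctly for part (1) versus ruled out by the strict-trail hypothesis in parts (2) and (3); (ii) confirming in part (3) that closure of $W$ really does force $v_k \in e_k$ and $v_0 \in e_1$ with $v_0 = v_k$, so the wrap-around edge $e_k e_1$ exists; and (iii) using $k \ge 3$ to ensure $W_L^*$ has the right length to be a cycle and that its first $k$ entries form a path so no internal repetition occurs.
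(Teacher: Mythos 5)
Your proof is correct and follows essentially the same route as the paper's: a direct verification that the shared anchor vertex $v_i \in e_i \cap e_{i+1}$ (and $v_0=v_k \in e_1 \cap e_k$ for the wrap-around step) yields adjacency in $\L(H)$, with distinctness of the $e_i$ supplied by the strict-trail hypothesis in parts (2) and (3) and $k\ge 3$ ruling out the degenerate closed sequence. The only difference is that you explicitly confront the possibility $e_i=e_{i+1}$ in part (1), which the paper's one-sentence argument passes over silently.
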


\begin{proof}
\begin{enumerate}
\item As defined, $W_L$ is a sequence of vertices in $L$ such that any two consecutive vertices are adjacent; that is, $W_L$ is a walk.
\item If $W$ is a strict trail, then none of the edges in $W$ are repeated, and hence none of the vertices in $W_L$ are repeated. Thus $W_L$ is a path.
\item If $W$ is a closed strict trail, then $e_1,\ldots,e_k$ are pairwise distinct and $v_0=v_k$, implying that $e_k$ and $e_1$ are adjacent in $L$. If $k \ge 3$, we may conclude that $W_L^*$ is a cycle in $L$.
\end{enumerate}
\vspace{-10mm}
\end{proof}

\begin{theo}\label{the:L(H)a}
Let $H=(V,E)$  be a hypergraph with at least 3 edges, and $L=\L(H)$ its intersection graph.
\begin{enumerate}
\item If $H$ is eulerian, then $L$ has a Hamilton cycle.
\item If $H$ is quasi-eulerian, then $L$ has a spanning subgraph whose connected components are 1-regular or 2-regular.
\item If $H$ has an Euler family with no strict closed trail of length less than 3, then $L$ has a 2-factor.
\end{enumerate}
\end{theo}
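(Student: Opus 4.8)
The plan is to deduce all three parts from Lemma~\ref{lem:line}, which turns (closed strict) trails in $H$ into paths and cycles in $L=\L(H)$, combined with the single structural fact that the trails of an Euler family partition $E=V(L)$.

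For part 1, I would take an Euler tour $T=v_0 e_1 v_1 \ldots e_k v_k$ of $H$. Since $T$ is a closed strict trail traversing every edge of $H$ exactly once, its length is $k=|E|\ge 3$ by hypothesis, so Lemma~\ref{lem:line}(3) applies and $T_L^*=e_1 e_2 \ldots e_k e_1$ is a cycle in $L$. As $e_1,\ldots,e_k$ is an enumeration of $E=V(L)$ with no repetition, this cycle visits every vertex of $L$ exactly once, i.e.\ it is a Hamilton cycle of $L$.

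For part 2, I would take an Euler family ${\cal F}=\{T_1,\ldots,T_r\}$ and handle each trail separately, writing $T_i=v_0^{(i)}e_1^{(i)}v_1^{(i)}\ldots e_{k_i}^{(i)}v_{k_i}^{(i)}$. If $k_i=2$, then $e_1^{(i)}\ne e_2^{(i)}$ (as $T_i$ is strict) and $e_1^{(i)}e_2^{(i)}$ is an edge of $L$; let $S_i$ be this copy of $K_2$. If $k_i\ge 3$, let $S_i=(T_i)_L^*$, a cycle in $L$ by Lemma~\ref{lem:line}(3). In either case $V(S_i)=\{e_1^{(i)},\ldots,e_{k_i}^{(i)}\}$, and by Property~(i) of an Euler family these vertex sets are pairwise disjoint with union $E=V(L)$. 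Hence $S=\bigcup_{i=1}^r S_i$ is a spanning subgraph of $L$ whose connected components are exactly $S_1,\ldots,S_r$ (each $S_i$ is connected, and no edge of $S$ joins distinct $S_i$ since every edge of $S_i$ has both ends in $V(S_i)$), and each component is $1$-regular (the $K_2$'s) or $2$-regular (the cycles). Part 3 is the same argument run on an Euler family in which every trail has length at least $3$: then every $S_i$ is a cycle, every component of $S$ is $2$-regular, so $S$ is a $2$-factor of $L$.

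I do not anticipate a real obstacle; the argument is bookkeeping layered on Lemma~\ref{lem:line}. The one step requiring care is the passage from ``each trail yields a cycle (or $K_2$) in $L$'' to ``these assemble into a spanning subgraph of the stated regularity'': this is precisely where the defining property that the trails of an Euler family partition $E(H)$ (equivalently, $V(L)$) is needed, both to get spanning and to ensure the pieces are vertex-disjoint so that no vertex of $S$ acquires degree larger than $2$. A secondary point is merely to remember that closed strict trails of length $2$ are permitted and must be treated as the $1$-regular case, which is exactly why parts 2 and 3 have different conclusions; and that invoking Lemma~\ref{lem:line}(3) in part 1 is legitimate thanks to the hypothesis $|E|\ge 3$.
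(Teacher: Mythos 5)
Your proof is correct and follows essentially the same route as the paper: both deduce all three parts from Lemma~\ref{lem:line}, observing that length-$2$ trails yield $K_2$'s and longer trails yield cycles in $L$, and that the edge-disjointness of the trails in an Euler family makes the resulting pieces vertex-disjoint and spanning. Your write-up is, if anything, slightly more explicit than the paper's about why the pieces assemble into a spanning subgraph of the stated regularity.
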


\begin{proof}
Let $W$ be an Euler tour in $H$. By Lemma~\ref{lem:line}, since $H$ has at least 3 edges, the corresponding sequence $W_L^*$ of vertices in $L$ is a cycle, and since $W$ traverses each edge of $H$ exactly once, the cycle $W_L^*$ traverses each vertex of $L$ exactly once. Thus, $W_L^*$ is a Hamilton cycle of $L$.

Similarly, let ${\cal F}$ be an Euler family of $H$. Each closed strict trail in ${\cal F}$ of length 2 gives rise to a path of length 1 in $L$, while each closed strict trail of length at least 3 corresponds to a cycle in $L$. Since the closed strict trails in ${\cal F}$ are pairwise edge-disjoint, the corresponding subgraphs in $L$ are pairwise vertex-disjoint, and since the members of ${\cal F}$ jointly cover all the edges of $H$, the corresponding subgraphs in $L$ form a spanning subgraph whose connected components are 1-regular or 2-regular. If  ${\cal F}$ contains no strict closed trails of length 2, then this spanning subgraph is in fact a 2-factor.
\end{proof}

Note that in general, the converse of Theorem~\ref{the:L(H)a} does not hold: for a walk $W$ in $\L(H)$, it may happen that every corresponding sequence of vertices and edges in $H$ contains two consecutive vertices that are the same. In Theorem~\ref{the:L(H)b} below, however, we present three families of hypergraphs for which the converse does hold. But first, we need the following lemma.

\begin{lemma}\label{lem:line2}
Let $H=(V,E)$  be a hypergraph and $L=\L(H)$ its intersection graph. Furthermore, let
$W_L= e_0 e_1  \ldots e_{k-1} e_0$ be a cycle in $L$. Assume that one of the following hold:
\begin{description}
    \item[(a)] $\deg_H(v) \le 2$ for all $v \in V$; or
    \item[(b)] $k$ is even and $|e_i \cap e_{i+1}| \ge 2$ for all $i\in \ZZ_k$; or
    \item[(c)] $|e_i \cap e_{i+1}| \ge 2$ for all $i\in \ZZ_k$, and $|e_{k-2} \cap e_{k-1}| \ge 3$.
\end{description}
Then there exist $v_0, v_1, \ldots, v_{k-1} \in V$ such that  $W=v_0 e_0 v_1 e_1 v_2 \ldots v_{k-1} e_{k-1} v_0$ is a closed strict trail in $H$.
\end{lemma}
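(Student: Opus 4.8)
The plan is to construct the trail $W$ by a ``threading'' argument: we walk along the cycle $W_L = e_0 e_1 \ldots e_{k-1} e_0$ in $L$, and at each step $i$ we choose an anchor vertex $v_i \in e_{i-1} \cap e_i$, taking care that the resulting sequence $v_0 e_0 v_1 e_1 \ldots v_{k-1} e_{k-1} v_0$ is a genuine closed strict trail of $H$ --- i.e.\ that consecutive anchors $v_i$ and $v_{i+1}$ are distinct (they automatically lie in $e_i$, and the edges $e_0,\ldots,e_{k-1}$ are already pairwise distinct since $W_L$ is a cycle, so strictness is free). The only thing that can go wrong is $v_i = v_{i+1}$ for some $i$, which would force us to merge two consecutive edge-steps; so the whole difficulty is a \emph{local} one of avoiding repeated anchors at each junction, and each of the three hypotheses (a), (b), (c) is exactly a device for guaranteeing enough room to do this.

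First I would dispose of case (a). If $\deg_H(v) \le 2$ for all $v \in V$, then for each $i$ the vertex $v_i \in e_{i-1} \cap e_i$ can be chosen, and once chosen it has degree exactly $2$, so the two edges through $v_i$ are precisely $e_{i-1}$ and $e_i$; in particular $v_i \ne v_{i+1}$ since $v_{i+1}$ lies on $e_i$ and $e_{i+1}$ and $e_{i-1}\ne e_{i+1}$ would be needed to make them equal --- and if $e_{i-1} = e_{i+1}$, that cannot happen on a cycle of length $k \ge 3$ unless $k = $ a small value, which I would check directly (for $k=3$ the three edges are distinct, so no collision; for $k \ge 4$, $e_{i-1}$ and $e_{i+1}$ are distinct vertices of the cycle). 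Actually the cleanest phrasing: pick any $v_i \in e_{i-1}\cap e_i$ for each $i$; if $v_i = v_{i+1}$ then this common vertex lies in $e_{i-1}, e_i, e_{i+1}$, three edges, contradicting $\deg_H(v_i)\le 2$ once we observe $e_{i-1}, e_i, e_{i+1}$ are pairwise distinct (true for $k\ge 3$ as consecutive vertices on a cycle, plus for $k=3$ all three are distinct). So in case (a) an arbitrary choice works.

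For cases (b) and (c) the arbitrary choice no longer works, and the idea is to make the choices \emph{sequentially} so that each new anchor differs from the previous one. In case (c), $|e_i \cap e_{i+1}| \ge 2$ for all $i$ gives at least two candidates at each junction, so starting from any $v_0 \in e_{k-1}\cap e_0$ I can greedily pick $v_i \in e_{i-1}\cap e_i$ with $v_i \ne v_{i-1}$ for $i = 1, \ldots, k-1$; the only junction not yet controlled is the wrap-around, i.e.\ $v_{k-1} \ne v_0$, and the extra hypothesis $|e_{k-2}\cap e_{k-1}| \ge 3$ is there to rescue this: when we reach step $k-1$ we have $\ge 3$ choices for $v_{k-1}$, so we can avoid both $v_{k-2}$ and $v_0$. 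In case (b), $k$ even, the trick is a parity/alternation argument in the style of the Lonc--Naroski proof: going around the cycle once, each junction has $\ge 2$ choices; doing the greedy walk we might be forced into $v_{k-1} = v_0$ at the end, but because $k$ is even we can instead run a two-coloring / alternating argument --- concretely, I would argue that the ``conflict graph'' on the junctions is a single cycle of even length $k$, hence $2$-colorable, and translate a proper $2$-coloring into a valid choice of anchors. I would want to be a little careful here: the right formulation is probably to note that it suffices to choose, for each $i$, $v_i \in e_{i-1}\cap e_i$ avoiding a single ``forbidden'' value inherited from the neighbor, and that such a system of choices around an even cycle with $\ge 2$ options each always exists (whereas around an odd cycle it can fail --- which is exactly why (b) needs $k$ even and (c) compensates with an extra vertex).

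The main obstacle is the wrap-around / global consistency of the anchor choices --- locally everything is trivial, but closing the cycle up is where parity (case (b)) or the extra intersection (case (c)) genuinely enters, and I expect the bulk of the write-up to be the even-cycle selection lemma needed for (b). I would state and prove that as a small combinatorial sublemma: \emph{given an even cycle $C_k$ and, for each edge of $C_k$, a set of $\ge 2$ colors, with the constraint that adjacent edges receive different colors, a valid coloring exists} --- or rather the dual statement matching our setup --- and then the three cases all follow by specializing it.
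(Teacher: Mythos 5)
Your proposal is correct and takes essentially the same route as the paper: the paper's proof is exactly this greedy threading (pick $v_0\in e_{k-1}\cap e_0$, then $v_i\in e_{i-1}\cap e_i-\{v_{i-1}\}$ for $i=1,\ldots,k-2$, then $v_{k-1}\in e_{k-2}\cap e_{k-1}-\{v_0,v_{k-2}\}$), with cases (a) and (c) disposed of by the same degree and cardinality observations you give. For case (b), the paper's fix --- when the last step is blocked because $e_{k-2}\cap e_{k-1}=\{v_0,v_{k-2}\}$, alternately relabel along the (even) cycle those junctions whose intersection is exactly this pair --- is precisely the ad hoc instance of the even-cycle selection sublemma (2-choosability of even cycles) that you isolate; that sublemma is true and standard, so your plan closes the same gap the paper does.
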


\begin{proof}
It suffices to choose, for each $i \in \ZZ_k$, a vertex $v_i \in e_{i-1} \cap e_{i}$  such that $v_i \ne v_{i-1}$. Then $W=v_0 e_0 v_1 e_1 v_2 \ldots v_{k-1} e_{k-1} v_0$ will be a closed strict trail in $H$. Consider the following algorithm:
\begin{enumerate}
\item Choose any $v_0 \in e_{k-1} \cap e_0$.
\item For all $i=1,\ldots,k-2$, let $v_i \in e_{i-1} \cap e_{i} - \{ v_{i-1} \}$.
\item Choose $v_{k-1} \in e_{k-2} \cap e_{k-1} - \{ v_0,v_{k-2} \}$.
\end{enumerate}
Steps 1--2 of the algorithm will be successful with any of the assumptions (a), (b), and (c), since $\deg_H(v) \le 2$ for all $v \in V$, or $|e_i \cap e_{i+1}| \ge 2$ for all $i\in \ZZ_k$. Step 3 will also be successful in Cases (a) and (c) since $\deg_H(v) \le 2$ for all $v \in V$, or $|e_{k-2} \cap e_{k-1}| \ge 3$. Hence consider Step 3 in Case (b).

If Step 3 cannot be executed, then we must have that $e_{k-2} \cap e_{k-1}=\{ v_0,v_{k-2} \}$. Consider the subgraph of the cycle $W_L$ induced by the edges of the form $e_{i-1} e_{i}$ such that $e_{i-1} \cap e_{i}=\{ v_0,v_{k-2} \}$. This subgraph is either the (even-length) cycle $W_L$ itself, or it is a vertex-disjoint union of paths. In either case, its edges can be alternately labelled with vertices $v_0$ and $v_{k-2}$, resulting in a revised choice of vertices $v_0,v_1,\ldots,v_{k-1} \in V$ that yields a closed strict trail in $H$.
\end{proof}

\begin{theo}\label{the:L(H)b}
Let $H=(V,E)$  be a hypergraph. Assume $L$ is a graph satisfying one of the following:
\begin{description}
    \item[(a)] $L=\L(H)$ if $\deg_H(v) \le 2$ for all $v \in V$; or
    \item[(b)] $L=\L_2^*(H)$ and $L$ is bipartite; or
    \item[(c)] $L=\L_3^*(H)$.
\end{description}
Then $H$ is eulerian (quasi-eulerian) whenever $L$ has a Hamilton cycle (2-factor, respectively). Moreover, in Cases (b) and (c), $H$ is quasi-eulerian whenever $L$ has a spanning subgraph whose connected components are 1-regular or 2-regular.
\end{theo}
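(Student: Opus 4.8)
The plan is to reduce everything to Lemma~\ref{lem:line2}, which converts a cycle in the appropriate intersection graph into a closed strict trail of $H$ as soon as consecutive edges of the cycle overlap sufficiently. The proof is then just a matter of checking that, in each of the cases (a), (b), (c), the object we are handed in $L$ --- a Hamilton cycle, a $2$-factor, or a spanning subgraph with only $1$- and $2$-regular components --- automatically satisfies the corresponding hypothesis (a), (b), or (c) of Lemma~\ref{lem:line2}.

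First I would do the Hamilton cycle / Euler tour implication. Suppose $L$ has a Hamilton cycle $C = e_0 e_1 \ldots e_{m-1} e_0$, where $m = |E|$; since a cycle in a simple graph has at least three vertices, $m \ge 3$. In Case (a), adjacency of $e_i$ and $e_{i+1}$ in $L = \L(H)$ means exactly $e_i \cap e_{i+1} \ne \emptyset$, and by hypothesis every vertex of $H$ has degree at most $2$, so hypothesis (a) of Lemma~\ref{lem:line2} holds for $C$. In Case (b), adjacency in $L = \L_2^*(H)$ gives $|e_i \cap e_{i+1}| \ge 2$ for all $i$, and bipartiteness of $L$ forces $C$ to have even length $m$, so hypothesis (b) holds. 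In Case (c), adjacency in $L = \L_3^*(H)$ gives $|e_i \cap e_{i+1}| \ge 3$ for all $i$, so in particular hypothesis (c) holds. In every case, Lemma~\ref{lem:line2} supplies a closed strict trail of $H$ with edge set $\{ e_0, \ldots, e_{m-1} \} = E$, i.e.\ an Euler tour, so $H$ is eulerian. (In Case (a) this implication is in fact an equivalence, the converse being Theorem~\ref{the:L(H)a}(1).)

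Next I would do the $2$-factor / Euler family implication together with the ``moreover'' clause. Let $F$ be a spanning subgraph of $L$ all of whose components are $1$-regular or $2$-regular; a $2$-factor is the case with no $1$-regular components. Each $2$-regular component of $F$ is a cycle, of length at least $3$ since $L$ is simple, and the case analysis of the previous paragraph applies to it verbatim (using bipartiteness in Case (b) to keep the cycle even), producing a closed strict trail of $H$. Each $1$-regular component is a single edge $ef$ of $L$; in Cases (b) and (c) this forces $|e \cap f| \ge 2$, so choosing distinct $u, v \in e \cap f$ yields the closed strict trail $u\,e\,v\,f\,u$ of length $2$. The closed strict trails so obtained are pairwise edge-disjoint, because the components of $F$ are pairwise vertex-disjoint in $L$, and jointly they cover $E$ because $F$ spans $L$. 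Hence $H$ admits a family of closed strict trails satisfying Property~(i) of the definition of an Euler family; by the remark following that definition, $H$ admits an Euler family, and so is quasi-eulerian. For a genuine $2$-factor there are no $1$-regular components, so no hypothesis on $L$ beyond (a), (b), or (c) is needed, which is why that part holds in all three cases.

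The one point requiring care --- and the reason Case (a) is missing from the ``moreover'' clause --- is that a $1$-regular component of $\L(H)$ merely records that two edges of $H$ meet, and if they meet in a single vertex there is no closed strict trail on just those two edges; it is precisely the reinforced adjacency built into $\L_2^*$ and $\L_3^*$ that makes the length-$2$ trails available. Beyond this, the argument is bookkeeping layered on Lemma~\ref{lem:line2}, and I expect no real obstacle.
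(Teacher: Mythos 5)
Your proposal is correct and follows essentially the same route as the paper: apply Lemma~\ref{lem:line2} to each cycle of the Hamilton cycle or of the spanning subgraph (checking that hypotheses (a)--(c) of that lemma are met in the respective cases), handle $1$-regular components in Cases (b) and (c) via length-$2$ closed strict trails, and concatenate trails sharing an anchor to obtain an Euler family. Your write-up is merely more explicit than the paper's about why each hypothesis of Lemma~\ref{lem:line2} is satisfied and why Case (a) is excluded from the ``moreover'' clause.
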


\begin{proof}
Assuming one of the Conditions (a)--(c), by Lemma~\ref{lem:line2}, a cycle in $L$ corresponds to a closed strict  trail in $H$. Moreover, a Hamilton cycle in $L$ corresponds to a closed strict trail containing all the edges of $H$ (that is, an Euler tour), and a 2-factor corresponds to a family of edge-disjoint closed strict  trails that jointly traverse all the edges. Sequentially concatenating any closed strict trails in this family with a common anchor vertex we obtain an Euler family of $H$.

In Cases (b) and (c), each 1-regular component of a spanning subgraph of $L$ gives rise to closed strict trail of length 2 in $H$, and an Euler family is obtained as above.
\end{proof}

%%%%%%%%%%%%%%%%%%%%%%%%%%%%%%%%%%%%%%%%%%%%%%%%%%%%%%%%%%%%%%%%%%%%%%%%%%%%%%%%%

%%%%%%%%%%%%%%%%%%%%%%%%%%%%%%%%%%%%
% Incidence graph characterization %
%%%%%%%%%%%%%%%%%%%%%%%%%%%%%%%%%%%%

\subsection{Characterization in terms of the incidence graph}\label{sec:G}

The following characterization of eulerian and quasi-eulerian hypergraphs in terms of their incidence graph will be henceforth our main tool.

\begin{theo}\label{the:ET-G}
Let $H=(V,E)$ be a connected hypergraph and $G$ its incidence graph. Then
$H$ is quasi-eulerian if and only if $G$ has an even subgraph  that is  2-regular on $E$, and it is eulerian if and only if $G$ has such a  subgraph  with a single non-trivial connected component.
\end{theo}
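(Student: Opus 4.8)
The plan is to prove both equivalences by translating the closed strict trails in $H$ into subgraphs of the incidence graph $G$ via Lemma~\ref{lem:W-W_G}, and then checking exactly which subgraphs of $G$ arise this way. Recall that a closed strict trail in $H$ corresponds (by Lemma~\ref{lem:W-W_G}) to a closed trail in $G$ that visits every e-vertex at most once; such a closed trail, being a closed trail in a graph, is an edge-disjoint union of cycles, i.e. an even subgraph of $G$ — and the condition ``visits every e-vertex at most once'' translates to: every e-vertex used has degree exactly $2$ in that even subgraph (since e-vertices cannot be endpoints of repeated anchors, they appear at most once, hence with degree $0$ or $2$). So a single closed strict trail of $H$ traversing the edge set $E' \subseteq E$ corresponds to a connected even subgraph of $G$ that is $2$-regular on $E'$ and avoids the e-vertices of $E \setminus E'$.

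First I would prove the quasi-eulerian direction. ($\Rightarrow$) Given an Euler family $\mathcal{F} = \{T_1,\dots,T_k\}$, each $T_i$ traverses a nonempty set $E_i$ of edges, these sets partition $E$, and the $T_i$ are pairwise anchor-disjoint. Pushing each $T_i$ through Lemma~\ref{lem:W-W_G} gives a connected even subgraph $G_i$ of $G$, $2$-regular on $E_i$; since the trails are anchor-disjoint, the $G_i$ are pairwise vertex-disjoint, hence their union is an even subgraph of $G$ that is $2$-regular on all of $E = \bigcup_i E_i$. ($\Leftarrow$) Conversely, given an even subgraph $G'$ of $G$ that is $2$-regular on $E$, decompose each connected component of $G'$ into an Eulerian closed trail (Veblen/Euler, using that $G'$ is even); each such closed trail visits each e-vertex it contains exactly once (since $G'$ is $2$-regular on $E$), and hence by Lemma~\ref{lem:W-W_G} corresponds to a closed strict trail in $H$. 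There is one subtlety to handle: a component of $G'$ could be a single edge (two v-vertices joined by... no — $G$ is bipartite, so a component with an edge contains an e-vertex), but more relevantly a component could consist of a single e-vertex with an isolated... no, even subgraph $2$-regular on $E$ forces every e-vertex to have degree $2$. The genuine subtlety is that a component might be ``short'': in $G$ a cycle through an e-vertex $e$ has the form $v\, e\, w\, e' \ldots$; the shortest such closed trail in $G$ corresponds to a closed walk $v\, e\, w\, e'\, v$ in $H$ of length $2$, which is a legitimate closed strict trail. So every component yields a valid closed strict trail, these are pairwise anchor-disjoint (components are vertex-disjoint), and together they traverse each edge exactly once — an Euler family. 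I should also note the components with no e-vertices (if any) are discarded; but $2$-regularity on $E$ together with taking $G'$ minimal, or simply restricting attention to components containing e-vertices, handles this.

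Then the eulerian direction follows the same template with ``single non-trivial connected component'' replacing the decomposition into a family. ($\Rightarrow$) An Euler tour is one closed strict trail traversing all of $E$, giving a connected even subgraph of $G$ that is $2$-regular on $E$; it is non-trivial (it contains edges since $E \ne \emptyset$ and $H$ is connected with no empty edges, hence $|E|\ge 2$ as a hypergraph with a closed walk has at least two edges). ($\Leftarrow$) Given such a subgraph $G'$ with a single non-trivial component $C$ (the other components, if any, being trivial, i.e. isolated vertices), $C$ contains every e-vertex (by $2$-regularity on $E$), is even, hence has an Eulerian closed trail visiting each e-vertex exactly once, which by Lemma~\ref{lem:W-W_G} is an Euler tour of $H$.

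The main obstacle, and the point requiring care, is the bookkeeping around degenerate cases: reconciling the graph-theoretic notion ``even subgraph'' (which may include isolated vertices and need not be connected) with the hypergraph notion of closed strict trail (which must have length $\ge 2$ and be ``genuinely closed''), and making sure the correspondence of Lemma~\ref{lem:W-W_G} is applied with the hypothesis ``no two consecutive v-vertices the same'' automatically satisfied — it is, because a cycle in bipartite $G$ alternates v-vertices and e-vertices, so consecutive v-vertices never occur. I would state explicitly at the start that by ``even subgraph $2$-regular on $E$'' we may assume (by discarding trivial components) that every component contains an e-vertex, which streamlines both $(\Leftarrow)$ arguments. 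Everything else is a routine application of Euler's/Veblen's theorem on graphs plus Lemma~\ref{lem:W-W_G}.
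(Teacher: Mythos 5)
Your proposal is correct and follows essentially the same route as the paper: translate closed strict trails of $H$ into closed trails of $G$ visiting each e-vertex at most once via Lemma~\ref{lem:W-W_G}, take the union of the resulting vertex-disjoint even subgraphs for the forward direction, and for the converse take an Euler tour of each non-trivial component of the even subgraph that is $2$-regular on $E$. The extra bookkeeping you supply (bipartiteness forcing alternation, length-$2$ closed strict trails from $4$-cycles, discarding trivial components) is sound and merely makes explicit what the paper leaves implicit.
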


\begin{proof}
Assume $H$ has an Euler family ${\cal F}$.  Then each edge of $H$ is traversed exactly once by a closed strict trail in ${\cal F}$. Hence ${\cal F}$ corresponds to a family ${\cal F}_G$ of closed trails of $G$ such that each $e \in E$ is traversed exactly once by a trail in ${\cal F}_G$. Let $G'$ be the  subgraph of $G$ corresponding to ${\cal F}_G$.  Then clearly $G'$ is even on $V$ and 2-regular on $E$.

Similarly, if $H$ has an Euler tour $T$, then $T$ is a closed strict trail that traverses each edge of $H$ exactly once, and hence  corresponds to a closed trail $T_G$ of $G$ that traverses each $e \in E$ exactly once. Then $T_G$ may be viewed as the unique connected component of a  subgraph $G'$ of $G$ that is even on $V$ and 2-regular on $E$.

Conversely, suppose $G'$ is an even subgraph of $G$ that is 2-regular on $E$.  Then each non-trivial connected component of $G'$ has an Euler tour; let ${\cal T}$ be the family of these closed trails in $G$.
The closed trails in ${\cal T}$ are pairwise vertex-disjoint and jointly traverse each e-vertex exactly once. Hence ${\cal T}$ corresponds to a family ${\cal F}$ of closed strict trails in $H$ that are pairwise anchor-disjoint and edge-disjoint, and together traverse each edge of $H$ exactly once; that is, an Euler family of $H$. If $G'$ has a single non-trivial connected component, then ${\cal F}$ contains a single closed strict trail; that is, an Euler tour of $H$.
\end{proof}

Using Theorem~\ref{the:ET-G}, certain families of hypergraphs can be easily seen to be eulerian or quasi-eulerian. The first of the following corollaries is immediate.

\begin{cor}\label{cor:2-factor}
Let $H$ be a hypergraph with the incidence graph $G$. If $G$ has a 2-factor, then $H$ is quasi-eulerian. If $G$ is hamiltonian, then $H$ is eulerian.
\end{cor}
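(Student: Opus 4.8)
The plan is to derive this immediately from Theorem~\ref{the:ET-G}, which characterizes quasi-eulerian (eulerian) hypergraphs via the existence of an even subgraph of the incidence graph $G$ that is $2$-regular on $E$ (with a single non-trivial component, respectively). So the whole task reduces to showing that a $2$-factor of $G$ is such a subgraph, and that a Hamilton cycle of $G$ is such a subgraph with a single non-trivial component.

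First I would treat the quasi-eulerian part. Suppose $G$ has a $2$-factor $F$. By definition, $F$ is a spanning subgraph of $G$ in which every vertex has degree exactly $2$. In particular $\deg_F(x)=2$ for every $x\in V\cup E$, so $F$ is $2$-regular on $E$, and since every degree equals $2$, $F$ is even on $V$ (indeed even everywhere). Thus $F$ is an even subgraph of $G$ that is $2$-regular on $E$, and Theorem~\ref{the:ET-G} gives that $H$ is quasi-eulerian. One should note that Theorem~\ref{the:ET-G} is stated for connected $H$; if $H$ is not assumed connected one either invokes the theorem componentwise (a $2$-factor of $G$ restricts to a $2$-factor on the incidence graph of each component, using Theorem~\ref{the:conn} and the fact that the components of $G$ are the incidence graphs of the components of $H$) or simply remarks that ``quasi-eulerian'' for a disconnected hypergraph means each component is quasi-eulerian.

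Next the eulerian part. Suppose $G$ is hamiltonian, and let $C$ be a Hamilton cycle of $G$. Then $C$ is a spanning subgraph in which every vertex has degree $2$, so exactly as above $C$ is an even subgraph of $G$ that is $2$-regular on $E$; moreover $C$ is connected and, since $G$ has at least one edge (else $H$ would have an edge of cardinality $0$ or $H$ would be empty and the statement is vacuous), $C$ is non-trivial, and it is the unique non-trivial connected component of itself. Hence by the second part of Theorem~\ref{the:ET-G}, $H$ is eulerian.

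I do not anticipate a genuine obstacle here: the corollary is flagged in the text as ``immediate'', and the only points requiring a word of care are (i) checking the trivial side-condition that $G$ has an edge so that the Hamilton cycle is non-trivial, and (ii) reconciling the connectedness hypothesis of Theorem~\ref{the:ET-G} with a possibly disconnected $H$ — both of which are routine. Accordingly the proof will be just a couple of sentences citing Theorem~\ref{the:ET-G}.
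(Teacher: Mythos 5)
Your proof is correct and follows exactly the route the paper intends: the paper offers no written proof, remarking only that the corollary is ``immediate'' from Theorem~\ref{the:ET-G}, and your observation that a $2$-factor (respectively, Hamilton cycle) of $G$ is an even subgraph that is $2$-regular on $E$ (respectively, with a single non-trivial component) is precisely that immediate deduction. The side remarks about connectedness and non-triviality are fine but not needed beyond what you already say.
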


\begin{cor}
Let $H$ be an $r$-regular $r$-uniform hypergraph for $r \ge 2$. Then $H$ is quasi-eulerian.
\end{cor}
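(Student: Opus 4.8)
The plan is to apply Corollary~\ref{cor:2-factor}, so the whole task reduces to showing that the incidence graph $G = \G(H)$ of an $r$-regular $r$-uniform hypergraph $H$ has a 2-factor. Recall that $G$ is bipartite with bipartition $\{V, E\}$: every v-vertex has degree $\deg_H(v) = r$ and every e-vertex has degree $|e| = r$, so $G$ is in fact $r$-regular. Moreover $|V| = |E|$, since double-counting flags gives $r|V| = |F(H)| = r|E|$ and $r \ge 2$. So $G$ is an $r$-regular bipartite graph with equal-sized parts, and the goal is to extract a 2-factor from it.

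The cleanest route is via perfect matchings. By the König--Hall theorem (or directly: every regular bipartite graph with parts of equal size satisfies Hall's condition, since for $S \subseteq V$ the $r|S|$ edge-endpoints in $S$ are spread over $N(S)$ with each vertex of $N(S)$ absorbing at most $r$ of them, forcing $|N(S)| \ge |S|$), $G$ has a perfect matching $M_1$. Deleting the edges of $M_1$ leaves an $(r-1)$-regular bipartite graph, still with equal parts; if $r - 1 \ge 1$ we again find a perfect matching $M_2$. Iterating, $G$ decomposes into $r$ perfect matchings $M_1, \ldots, M_r$ (this is just the statement that an $r$-regular bipartite graph is $r$-edge-colourable). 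Since $r \ge 2$, the union $M_1 \cup M_2$ is a spanning $2$-regular subgraph of $G$, i.e.\ a 2-factor. Then Corollary~\ref{cor:2-factor} gives that $H$ is quasi-eulerian.

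I anticipate no serious obstacle here — the result is essentially a packaging of standard bipartite-graph facts (regularity of $\G(H)$, equality of part sizes, Hall's condition, and the $1$-factorization of regular bipartite graphs). The one point worth stating carefully is that $r \ge 2$ is needed twice: once to conclude $r|V| = r|E| \Rightarrow |V| = |E|$ cannot be vacuous, and once to guarantee that we can take \emph{two} of the perfect matchings to build the 2-factor (an $r$-regular bipartite graph with $r = 1$ is just a perfect matching and has no 2-factor, which is consistent with the fact that a $1$-regular hypergraph consists of disjoint single edges and is not quasi-eulerian). A reader-friendly alternative to the $1$-factorization argument would be to invoke the classical theorem of Petersen that every $2k$-regular graph has a $2$-factor, applied to $G$ when $r$ is even, combined with the perfect-matching argument when $r$ is odd; but the perfect-matching decomposition handles all $r \ge 2$ uniformly and is the shorter write-up.
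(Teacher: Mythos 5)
Your proof is correct and follows essentially the same route as the paper: the paper also observes that $\G(H)$ is an $r$-regular bipartite graph, invokes Hall's Theorem to obtain two edge-disjoint perfect matchings whose union is a 2-factor, and concludes via Corollary~\ref{cor:2-factor}. You merely spell out the intermediate steps (equality of part sizes, verification of Hall's condition, the full $1$-factorization) that the paper leaves implicit.
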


\begin{proof}
The incidence graph $G$ of $H$ is an $r$-regular bipartite graph with $r \ge 2$. Therefore, as a corollary of Hall's Theorem \cite{Hal}, $G$ admits two edge-disjoint perfect matchings, and hence a 2-factor. Thus $H$ is quasi-eulerian by Corollary~\ref{cor:2-factor}.
\end{proof}

\begin{cor}
Let $H$ be a $2k$-uniform even hypergraph. Then $H$ is quasi-eulerian.

Moreover, $H$ has a collection of Euler families $\{{\cal F}_1,\ldots, {\cal F}_k\}$ such that each flag of $H$ occurs as an anchor flag of exactly one family ${\cal F}_i$ in this collection.
\end{cor}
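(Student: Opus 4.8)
The plan is to establish the refined statement directly via the incidence graph $G=\G(H)$. Since $H$ is $2k$-uniform, every e-vertex of $G$ has degree $2k$, and since $H$ is even, every v-vertex of $G$ has even degree; thus $G$ is an even bipartite graph in which each e-vertex has degree $2k$. Following the proof of Theorem~\ref{the:ET-G}, but keeping track of flags, it will suffice to partition $E(G)$ --- equivalently, the flag set $F(H)$ --- into sets $F_1,\ldots,F_k$ so that each spanning subgraph $G_i:=(V\cup E,F_i)$ is $2$-regular on $E$ and even on $V$. Indeed, each non-trivial component of such a $G_i$ is connected and even, hence has an Euler tour; this Euler tour visits every e-vertex of the component exactly once (and no other e-vertex), so by Lemma~\ref{lem:W-W_G} it corresponds to a closed strict trail of $H$ whose anchor flags are exactly the edges of the Euler tour (the component has at least two e-vertices, so this trail has length at least $2$ in $H$). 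Since $G_i$ is $2$-regular on $E$, each e-vertex lies in exactly one component of $G_i$, so these trails --- anchor-disjoint, as they come from vertex-disjoint components --- form an Euler family ${\cal F}_i$ of $H$ with anchor-flag set $F_i$. As $F_1,\ldots,F_k$ partition $F(H)$, this yields the desired collection, and $H$ is quasi-eulerian (witnessed by ${\cal F}_1$).

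It remains to construct the partition $F_1,\ldots,F_k$. First, at every v-vertex $v$, fix an arbitrary partition of the $\deg_H(v)$ edges of $G$ at $v$ into $\tfrac12\deg_H(v)$ pairs, called \emph{transitions at $v$}; this uses only that $\deg_H(v)$ is even. Form an auxiliary multigraph $\hat M$ on vertex set $E$ by turning each transition $\{ve,vf\}$ into an edge of $\hat M$ joining $e$ and $f$. Since $G$ is simple and bipartite, no two edges of $G$ at $v$ share an e-endpoint, so $e\ne f$ and $\hat M$ is loopless. Every edge $ve$ of $G$ belongs to exactly one transition (the one at $v$), hence $\deg_{\hat M}(e)=\deg_G(e)=2k$ for all $e\in E$; that is, $\hat M$ is a loopless $2k$-regular multigraph. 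By Petersen's $2$-factor theorem, $\hat M$ decomposes into $k$ $2$-factors $M_1,\ldots,M_k$.

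Finally, pull this decomposition back to $G$: let $F_i$ be the union, over the edges of $M_i$, of the corresponding transitions, viewed as $2$-element subsets of $E(G)$. The $M_i$ partition the edge set of $\hat M$, i.e.\ the set of all transitions, and distinct transitions are edge-disjoint in $G$, so $F_1,\ldots,F_k$ partition $E(G)$. For an e-vertex $e$: exactly two edges of the $2$-factor $M_i$ are incident with $e$, and each contributes exactly one $G$-edge at $e$ to $F_i$, so $\deg_{G_i}(e)=2$. For a v-vertex $v$: every transition at $v$ lying in $M_i$ contributes its two $G$-edges at $v$ to $F_i$, so $\deg_{G_i}(v)$ is even. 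Thus each $G_i$ is $2$-regular on $E$ and even on $V$, as required.

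The crux --- and the step I expect to require the most care --- is the passage to the auxiliary multigraph $\hat M$: the observation that splitting the $2k$ edges at each e-vertex consistently across the $k$ families is, after ``short-circuiting'' through an arbitrary transition system at the v-vertices, exactly a $2$-factorization of a $2k$-regular multigraph, together with the verification that $\hat M$ is genuinely $2k$-regular and loopless (so that Petersen's theorem applies). The remaining ingredients --- the dictionary between subgraphs of $G$ that are even on $V$ and $2$-regular on $E$ and Euler families of $H$, and the identification of anchor flags with edges of $G$ --- follow the already-established Theorem~\ref{the:ET-G} and Lemma~\ref{lem:W-W_G}.
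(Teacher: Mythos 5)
Your proof is correct, but it replaces the key step of the paper's argument with a different one. Both proofs reduce the corollary, via Theorem~\ref{the:ET-G}, to partitioning the edge set of the incidence graph $G$ into $k$ classes, each inducing a subgraph that is even on $V$ and $2$-regular on $E$. The paper obtains this partition in one stroke by citing Hilton's theorem on evenly equitable $k$-edge-colourings \cite{Hil}: for an e-vertex of degree $2k$, the conditions ``each colour class even at every vertex'' and ``colour counts at a vertex differ by at most two'' force exactly two edges of each colour, and evenness at v-vertices is immediate. You instead build the partition by hand: fix an arbitrary pairing (transition system) of the edges at each v-vertex, contract through these pairs to get a loopless $2k$-regular multigraph on $E$, apply Petersen's $2$-factorization theorem, and pull the $2$-factors back. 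Your verifications (looplessness and $2k$-regularity of the auxiliary multigraph, the degree counts for each $G_i$, and the fact that each nontrivial component of $G_i$ contains at least two e-vertices so the resulting closed trails have length at least $2$) are all sound. What each approach buys: the paper's is shorter and also yields equitability at the v-vertices (colour counts there differ by at most two), a property you discard by choosing the transitions arbitrarily but which is not needed for the statement; yours is more self-contained, rests on the more standard Petersen theorem rather than Hilton's result, and is visibly constructive (Euler tours plus K\H{o}nig's theorem give the $2$-factorization algorithmically). Either argument proves the corollary in full.
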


\begin{proof}
Let $G$ be the incidence graph of $H$. In $G$, every e-vertex has degree $2k$, and every v-vertex has even degree. A result by Hilton \cite[Theorem 8]{Hil} then shows that $G$ has an {\em evenly equitable} $k$-edge colouring; that is, a $k$-edge colouring such that (i) every vertex is incident with an even number of edges of each colour, and (ii) for each vertex, the numbers of edges of any two colours that are incident with this vertex differ by at most two. Hence the $i$-th colour class, for $i=1,2,\ldots,k$, induces an even subgraph $G_i$ of $G$ that is 2-regular on $E$. By Theorem~\ref{the:ET-G}, each $G_i$ corresponds to an Euler family ${\cal F}_i$ of $H$, and $H$ is quasi-eulerian. Since every edge of $G$ lies in exactly one of $G_1,\ldots,G_k$, it follows that each flag of $H$ occurs as an anchor flag of exactly one family among ${\cal F}_1,\ldots, {\cal F}_k$.
\end{proof}

%%%%%%%%%%
% Blocks %
%%%%%%%%%%

\subsection{Characterization using blocks}\label{sec:blocks}

In this section, we reduce the problem of existence of an Euler family in a hypergraph to the identical problem on its blocks (to be defined below, analogously to blocks in graphs). As expected, this reduction is a little more complicated, and perhaps not as useful, in the case of Euler tours. We refer the reader to \cite{BahSaj} for more information on blocks in a hypergraph.

\begin{defn} {\rm \cite{BahSaj}
Let $H=(V,E)$ be a connected hypergraph without empty edges. A vertex $v \in V$ is a {\em separating vertex} for $H$ if $H$ decomposes into two non-empty connected hypersubgraphs with just vertex $v$ in common. That is, $H=H_1 \oplus H_2$, where  $H_1$ and $H_2$ are two non-empty connected hypersubgraphs of $H$ with  $V(H_1) \cap V(H_2)=\{ v \}$.
}
\end{defn}

\begin{defn} {\rm \cite{BahSaj}
A connected hypergraph without empty edges that has no separating vertices is called {\em non-separable}. A {\em block} of a hypergraph $H$ is a maximal non-separable hypersubgraph of $H$.
}
\end{defn}

\begin{theo}\label{the:blocks}
Let $H=(V,E)$ be a hypergraph. Then:
\begin{enumerate}
\item $H$ has an Euler family if and only if each block of $H$ has an Euler family.
\item $H$ has an Euler tour (necessarily  traversing every separating vertex of $H$) if and only if each block $B$ of $H$ has an Euler tour that traverses every separating vertex of $H$ contained in $B$.
\end{enumerate}
\end{theo}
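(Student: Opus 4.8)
The plan is to work with the incidence graph $G = \G(H)$ throughout and transfer the statement to a statement about blocks of $G$, using Theorem~\ref{the:ET-G} as the bridge. Recall from \cite{BahSaj} (as cited in the excerpt) that a cut edge of $H$ is a cut e-vertex of $G$, that separating vertices of $H$ are cut v-vertices of $G$, and that the blocks of $H$ correspond naturally to the blocks of $G$ (a block of $G$ containing at least one edge of $G$ is the incidence graph of a block of $H$, and cut edges of $H$ are trivial blocks). The first thing I would establish is that every e-vertex of $G$ lies in exactly one non-trivial block of $G$, since an e-vertex cannot be a cut-vertex of $G$ that separates two pieces both containing edges of $G$ without itself being a cut edge --- and we may handle cut edges separately, noting that a hypergraph with a cut edge is, by Theorem~\ref{the:cut-edges}(1), not quasi-eulerian unless... no: actually a cut edge that is a strong cut edge kills quasi-eulerianity, but an edge of cardinality $2$ that is a cut edge is strong, so any block consisting of a single cut edge prevents an Euler family. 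Thus for part (1) both sides are simultaneously false if $H$ has a cut edge, and otherwise every block of $H$ is non-trivial; I would phrase the argument to cover this cleanly.

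For part (1): by Theorem~\ref{the:ET-G}, $H$ is quasi-eulerian iff $G$ has an even subgraph $G'$ that is $2$-regular on $E$. The key structural fact is that such a subgraph $G'$, restricted to any block $B_G$ of $G$, remains even and $2$-regular on the e-vertices of $B_G$: this is because each connected component of $G'$ is an Eulerian graph (even and connected), hence $2$-edge-connected up to trivial issues, so it lies inside a single block of $G$; more carefully, each non-trivial connected component of $G'$ is a union of edge-disjoint cycles (Veblen's Theorem \cite{Veb}), and each cycle lies within a single block of $G$. Conversely, if each block $B$ of $H$ has an Euler family, pick for each block $B$ an even subgraph of $\G(B)$ that is $2$-regular on $E(B)$; since distinct blocks of $G$ share at most one (v-)vertex, and that shared vertex has even degree in each piece, the union of all these subgraphs over all blocks is an even subgraph of $G$ that is $2$-regular on all of $E$. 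Apply Theorem~\ref{the:ET-G} again. The only subtlety is checking that ``$2$-regular on $E$'' is preserved under the union: each e-vertex lies in exactly one block, so no doubling of degrees occurs at e-vertices; at a v-vertex shared by several blocks the degrees add but parity is preserved since each summand is even. This I would spell out as the main technical step.

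For part (2), the situation is genuinely more delicate, which is why the statement is phrased with the extra condition about traversing separating vertices. By Theorem~\ref{the:ET-G}, $H$ is eulerian iff $G$ has an even subgraph $G'$ that is $2$-regular on $E$ with a single non-trivial connected component $C$. Such a $C$ is a connected even subgraph containing every e-vertex; since it is connected and even it lies, as noted, within... no --- $C$ need not lie within a single block of $G$, because $C$ may pass through cut v-vertices of $G$ (these are exactly the separating vertices of $H$). So $C$ meets several blocks $B_G$, and within each such block $B_G$, $C \cap B_G$ is an even subgraph of $B_G$ that is $2$-regular on $E(B_G)$ and connected (an Eulerian graph intersected with a block stays connected because $C$ enters and leaves the block only through the one shared cut-vertex), hence an Euler tour of $B_G$; and it must traverse that cut-vertex, i.e., it traverses every separating vertex of $H$ contained in $B$. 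Conversely, given an Euler tour of each block $B$ traversing all separating vertices of $H$ in $B$, the corresponding closed trails in the $\G(B)$ share vertices according to the block-tree structure of $H$, and since each such trail visits every relevant separating vertex, we can splice them together along the block-cut-vertex tree into a single closed trail of $G$ traversing every e-vertex exactly once --- this uses that the blocks of a connected graph, glued at cut-vertices, form a tree, and that at each cut-vertex both incident block-trails pass through it, so concatenation is possible; formally one inducts on the number of blocks. Then apply Theorem~\ref{the:ET-G}.

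The main obstacle I anticipate is the ``gluing'' direction of part (2): one must argue that the individual block Euler tours, each passing through the shared separating vertices, can be concatenated into one global closed strict trail of $H$ visiting every edge exactly once --- this requires care about the order of concatenation around a separating vertex belonging to three or more blocks, and about the base case of the induction. The clean way is to induct on the number of blocks of $H$: peel off a ``leaf'' block $B$ of the block-tree, attached to the rest of $H$ at a single separating vertex $v$; by induction $H' = H - (E(B))$ restricted appropriately has an Euler tour through all its separating vertices (in particular through $v$, provided $v$ is still a separating vertex or an anchor --- one must ensure $v$ is traversed, which is where the hypothesis ``traverses every separating vertex of $H$ contained in $B$'' is exactly what is needed), and the Euler tour of $B$ also passes through $v$; splice the two closed trails at a common occurrence of $v$. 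Parts (1) and the ``only if'' direction of (2) are comparatively routine given Theorem~\ref{the:ET-G} and Veblen's Theorem.
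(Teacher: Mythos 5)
Your overall route --- transferring everything to the incidence graph $G=\G(H)$ via Theorem~\ref{the:ET-G}, decomposing the even subgraph into cycles by Veblen's Theorem, and splicing block tours along the block--cut-vertex tree --- is genuinely different from the paper's proof, which stays inside the hypergraph: the paper defines, for a closed strict trail $T$ of $H$ and a block $B$, a restriction $T\vert_B$ obtained by deleting from $T$ all vertices and edges outside $B$, shows $T\vert_B$ is a closed strict trail of $B$ using the fact that every cycle of $H$ lies within a single block \cite[Theorem 3.36]{BahSaj}, and handles the converse directions by plain concatenation. Your version is viable and makes the parity bookkeeping in part (1) quite transparent, but it forces you to verify a dictionary between blocks of $H$ and subgraphs of $G$, and that dictionary is where you go wrong.

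Specifically, your claims that ``every e-vertex of $G$ lies in exactly one non-trivial block of $G$,'' that ``a block of $G$ containing at least one edge of $G$ is the incidence graph of a block of $H$,'' and above all that ``for part (1) both sides are simultaneously false if $H$ has a cut edge'' are false. Blocks of $H$ are cut out only by separating \emph{vertices}; a block of $H$ may contain a weak cut edge $e$, in which case the e-vertex $e$ is a cut vertex of $G$ and $\G(B)$ splits into several blocks of $G$ glued at $e$. The hypergraph of Figure~\ref{fig:pic4} is quasi-eulerian and has a cut edge, which refutes your ``both sides are simultaneously false'' escape hatch; Theorem~\ref{the:cut-edges}(1) applies only to \emph{strong} cut edges. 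The facts you actually need are: each edge of $H$ lies in exactly one block of $H$; two distinct blocks of $H$ share at most one vertex (necessarily a separating vertex); and each block of $G$ --- hence each cycle of $G'$ --- is contained in $\G(B)$ for a unique block $B$ of $H$ (which follows from the cycle-in-a-block theorem of \cite{BahSaj}, not from an identification of the two block structures). With these substituted for your false premises, your arguments for part (1) and for both directions of part (2), including the leaf-block induction for the splicing step, go through; as written, the opening reduction of your proof rests on a claim that the paper's own examples contradict.
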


\begin{proof}
Let $B$ be any block of $H$, and $T$ a closed strict trail of $H$. In $T$, delete all vertices and edges of $H$ that are not in $B$. By \cite[Theorem 3.36]{BahSaj}, every cycle of $H$ is contained within a block; consequently, each of the remaining subsequences of $T$ ends with the first vertex of the next subsequence (in cyclical order). Denote the concatenation of these remaining subsequences of $T$ by $T\vert_B$, and observe that $T\vert_B$ is a closed strict trail in $B$.
\begin{enumerate}
\item Assume $H$ has an Euler family $\cal F$, and let $B$ be any block of $H$. Then the set of all closed strict trails of the form $T\vert_B$, for all trails $T$ in  ${\cal F}$ that contain edges of $B$, is an Euler family of $B$.

    Conversely, suppose that each block $B$ of $H$ has an Euler family ${\cal F}_B$. Take the union of all ${\cal F}_B$ and pairwise concatenate any closed strict trails in this union that have an anchor in common, until every pair of resulting trails are anchor-disjoint. The result is an Euler family of $H$.
\item Assume $H$ has an Euler tour $T$ (necessarily  traversing every separating vertex of $H$), and let $B$ be a block of $H$. Then $T\vert_B$ is an Euler tour of the block $B$ traversing  every separating vertex of $H$ contained in $B$.

    Conversely, if each block $B$ of $H$ has an Euler tour $T_B$ traversing  every separating vertex of $H$ contained in $B$, then all the $T_B$ can be concatenated to give an Euler tour of $H$.
\end{enumerate}
\vspace{-10mm}
\end{proof}

Using our characterization in terms of the incidence graph (Theorem~\ref{the:ET-G}), the above theorem can be immediately augmented as follows.

\begin{cor}
Let $H=(V,E)$ be a hypergraph. Then:
\begin{enumerate}
\item $H$ has an Euler family if and only if for each block $B$ of $H$, the incidence graph $G_B$ of $B$ has an even subgraph $G_B'$ that is 2-regular on $E(B)$.
\item $H$ has an Euler tour  if and only if for each block $B$ of $H$, the incidence graph $G_B$  of $B$ has an even subgraph $G_B'$ that is 2-regular on $E(B)$ and has a unique non-trivial connected component, which necessarily contains every separating vertex of $H$ that lies in $B$.
\end{enumerate}
\end{cor}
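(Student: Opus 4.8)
The plan is to combine Theorem~\ref{the:blocks}, which reduces the existence of an Euler family (respectively, Euler tour) in $H$ to the corresponding question on the blocks of $H$, with Theorem~\ref{the:ET-G}, which characterizes Euler families and Euler tours of a connected hypergraph in terms of its incidence graph. The one preliminary observation is that every block $B$ of $H$ is, by definition, a (maximal) non-separable hypersubgraph, hence a connected hypergraph without empty edges; thus Theorem~\ref{the:ET-G} does apply to $B$ with its incidence graph $G_B = \G(B)$.

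For part 1, I would simply chain the equivalences. By Theorem~\ref{the:blocks}(1), $H$ has an Euler family if and only if each block $B$ of $H$ has an Euler family; and by Theorem~\ref{the:ET-G} applied to the connected hypergraph $B$, the latter holds if and only if $G_B$ has an even subgraph that is 2-regular on $E(B)$. Combining the two equivalences gives part 1 as stated.

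For part 2 the chain is the same, but one must carry along the condition on separating vertices. By Theorem~\ref{the:blocks}(2), $H$ has an Euler tour if and only if each block $B$ of $H$ has an Euler tour that traverses every separating vertex of $H$ contained in $B$. I would then translate the property ``$B$ has an Euler tour traversing a prescribed set of v-vertices'' into incidence-graph language, following the argument in the proof of Theorem~\ref{the:ET-G}: an Euler tour $T_B$ of $B$ corresponds to an even subgraph $G_B'$ of $G_B$ that is 2-regular on $E(B)$ and whose unique non-trivial connected component is exactly the closed trail of $G_B$ realizing $T_B$; moreover $T_B$ has a v-vertex $v$ as an anchor precisely when $v$ has positive degree in $G_B'$, equivalently when $v$ lies in that non-trivial component (and then $\deg_{G_B'}(v) \ge 2$, since $G_B'$ is even). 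Hence $B$ has an Euler tour traversing every separating vertex of $H$ in $B$ if and only if $G_B$ has an even subgraph that is 2-regular on $E(B)$, has a unique non-trivial connected component, and this component contains every separating vertex of $H$ lying in $B$; the phrase ``necessarily contains'' merely records that this last membership is forced by the requirement. Chaining with Theorem~\ref{the:blocks}(2) yields part 2.

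The only mildly fiddly point is the back-and-forth bookkeeping in part 2: namely, that the v-vertices traversed by the incidence-graph Euler trail are exactly those of positive degree in $G_B'$, and that a separating vertex forced into the non-trivial component automatically has even (hence $\ge 2$) degree there, so that the corresponding closed strict trail of $B$ does pass through it. I expect this to be the only step requiring care, and it is already implicit in the proof of Theorem~\ref{the:ET-G}, so the corollary is genuinely immediate.
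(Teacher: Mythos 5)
Your proposal is correct and follows exactly the route the paper intends: the paper states this corollary without proof, remarking only that Theorem~\ref{the:blocks} is ``immediately augmented'' by Theorem~\ref{the:ET-G}, and your chaining of the two equivalences (including the bookkeeping that a separating vertex is an anchor of the block's Euler tour precisely when it lies in the unique non-trivial component of $G_B'$) is precisely the intended argument.
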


With the insight of Theorem~\ref{the:blocks}, one can easily construct a connected hypergraph $H$ that is quasi-eulerian but not eulerian, namely, one with a separating vertex. Let $B$ be an eulerian non-separable hypergraph that has more vertices than edges, and let $A$ be any eulerian hypergraph. For each vertex $v$ of $B$, let $A_v$ be a copy of the hypergraph $A$ such that $V(A_v) \cap V(B)=\{ v \}$ and the hypergraphs $A_v$ are all pairwise vertex-disjoint. Now let $H$ be the union of $B$ with all $A_v$. Each $v \in V(B)$ is now a separating vertex of $H$, and since $B$ cannot have an Euler tour traversing every vertex, $H$ has no Euler tour. However, the  Euler tours of $B$ and all the $A_v$ (concatenating any that have common anchor vertices) will give rise to an Euler family for $H$.

One may then ask whether a connected hypergraph without separating vertices that admits an Euler family necessarily admits an Euler tour. The answer is negative, as shown by the counterexample below.

\begin{example}{\rm
Let $H$ be a hypergraph whose incidence graph is shown in Figure~\ref{fig:Count1}. Observe that $H$ is quasi-eulerian but not eulerian, has no cut edges and no separating vertices, and  satisfies the necessary condition  from Lemma~\ref{lem:NecCond}.
}
\end{example}

\begin{center}
\begin{figure}[h!]
\centerline{\includegraphics[scale=0.5]{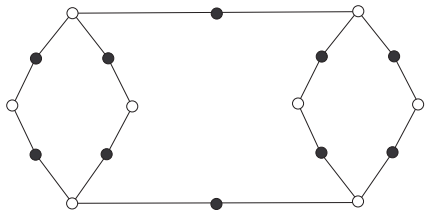}}
\caption{The incidence graph of a quasi-eulerian but not eulerian hypergraph, without cut edges and without separating vertices.  (Black dots represent the v-vertices.)}\label{fig:Count1}
\end{figure}
\end{center}

%%%%%%%%%%%%%%
% Complexity %
%%%%%%%%%%%%%%

\subsection{Complexity of the {\sc Euler Tour} and {\sc Euler Family} problems}\label{sec:compl}

We shall now turn our attention to the complexity of the two problems. First, we define the {\em bigness} of a hypergraph  as the maximum of the order (number of vertices) and the size (number of edges) of the hypergraph. In this section, we show that the problem of determining whether or not a given hypergraph is eulerian is NP-complete, while --- perhaps surprisingly --- the problem of determining whether or not a given hypergraph is quasi-eulerian is polynomial in the bigness of the hypergraph.

We begin by formally defining our first decision problem.

\begin{problem}
{\sc Euler Tour}

{\sc Given:} A hypergraph $H$.

{\sc Decide:} Does $H$ have an Euler tour?
\end{problem}

Recall that a hypergraph is called {\em linear} if every pair of distinct edges intersect in at most one vertex.

Lonc and Naroski \cite{LonNar} showed that {\sc Euler Tour} is NP-complete on the set of $k$-uniform hypergraphs for any $k\ge 3$ (as well as on the set of 3-uniform hypergraphs with a connected skeleton). Their proof for $k=3$ actually shows that the problem is NP-complete on the smaller class of linear 2-regular 3-uniform hypergraphs, as stated in Theorem~\ref{the:ETproblem1} below. For completeness, we include the proof from \cite{LonNar}.
%The {\em skeleton} of a hypergraph $H=(V,E)$ is defined to be the graph with edge set $\{ e \cap f: e,f \in E, |e \cap f|=2\}$ and vertex set obtained by taking the union of these edges. The skeleton of a linear hypergraph is thus undefined, and the result from \cite{LonNar} mentioned above says nothing about the complexity of {\sc  Euler Tour} on the class of linear 2-regular 3-uniform hypergraphs, which will be considered  in our Theorem~\ref{the:ETproblem1} below.
In the polynomial reduction, the following known NP-complete problem is used.

\begin{problem}
{\sc Hamilton Cycle}

{\sc Given:} A graph $G$.

{\sc Decide:} Does $G$ have a Hamilton cycle?
\end{problem}

\begin{theo}\label{the:ETproblem1}
Let ${\cal LH}_3^2$ denote the family of linear 2-regular 3-uniform hypergraphs. Then {\sc  Euler Tour} is NP-complete on ${\cal LH}_3^2$.
\end{theo}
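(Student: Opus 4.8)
The plan is to give a polynomial reduction from \textsc{Hamilton Cycle} to \textsc{Euler Tour} restricted to the class ${\cal LH}_3^2$, thereby establishing NP-hardness; NP-membership is immediate, since a closed strict trail of $H$ has length exactly $|E|$ and can be given as a certificate and verified in polynomial time. The key idea is to encode a graph $G$ as a linear $2$-regular $3$-uniform hypergraph $H_G$ whose intersection structure mirrors $G$, in such a way that an Euler tour of $H_G$ exists precisely when $G$ is hamiltonian.

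First I would describe the gadget. Given a graph $G=(V_G,E_G)$, for each edge $xy \in E_G$ introduce a ``middle'' vertex $w_{xy}$, and for each original vertex $x \in V_G$ introduce a small cyclic arrangement of $\deg_G(x)$ auxiliary vertices that will let the tour ``pass through $x$'' in each of its incident edge-gadgets. Concretely, one replaces each edge $xy$ by a $3$-element hyperedge containing $w_{xy}$ together with one auxiliary vertex from the $x$-cluster and one from the $y$-cluster, and one adds, around each vertex $x$, a cyclic chain of $3$-element hyperedges on the $x$-cluster that forces every auxiliary vertex of $x$ to be used. The clusters are sized and the incidences chosen so that every vertex of $H_G$ lies in exactly two hyperedges (so $H_G$ is $2$-regular), any two hyperedges share at most one vertex (so $H_G$ is linear), and every hyperedge has size $3$. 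This construction is clearly computable in time polynomial in $|V_G|+|E_G|$, so the bigness of $H_G$ is polynomially bounded.

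Next I would prove the equivalence. For the forward direction, a Hamilton cycle $C$ of $G$ induces a natural closed strict trail in $H_G$: traverse the edge-gadgets of the edges of $C$ in cyclic order, and at each vertex $x$ splice in the entire $x$-cluster chain so that all the $x$-gadgets get used; the $2$-regularity and linearity guarantee that following $C$ forces a consistent choice of anchor vertices, and a short check shows every hyperedge of $H_G$ is traversed exactly once. For the converse, one argues that any Euler tour of $H_G$ must, at each vertex-cluster, enter and leave through the edge-gadgets in a way that selects exactly two incident edges of $G$ at $x$ (because each auxiliary vertex has degree $2$ and the cluster chain is itself a cycle), so contracting each cluster recovers a $2$-regular spanning subgraph of $G$ that is connected, i.e.\ a Hamilton cycle. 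Here the necessary conditions of Lemma~\ref{lem:NecCond} and the degree bookkeeping of Lemma~\ref{lem:line} (or simply the incidence-graph translation of Lemma~\ref{lem:W-W_G}) make the local analysis clean.

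The main obstacle I expect is the converse direction: ensuring that the gadget is rigid enough that an Euler tour cannot ``cheat'' by using the cluster chains to locally reconnect pieces of a disconnected $2$-factor of $G$, or by passing through a cluster more times than intended. Getting the cluster gadget to simultaneously satisfy $2$-regularity, linearity, $3$-uniformity, and this rigidity is the delicate part; it typically requires choosing the cluster to be a short cycle of hyperedges together with careful matching of which auxiliary vertices are shared with which edge-gadget, and then a case analysis of how a closed strict trail can enter and exit the cluster. Once the gadget is pinned down, the polynomial-time bound and NP-membership are routine, and the theorem follows.
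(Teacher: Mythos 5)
Your overall strategy (reduce from \textsc{Hamilton Cycle}, with NP-membership being routine) is the right one, but the reduction itself is never actually constructed, and the sketch you give has concrete problems. First, the ``middle'' vertex $w_{xy}$ as described lies in only one hyperedge, so $H_G$ would not be $2$-regular; you never say what second hyperedge contains it. Second, in the forward direction you only describe how to traverse the edge-gadgets of the edges \emph{on} the Hamilton cycle $C$ together with the cluster chains, but an Euler tour must traverse \emph{every} hyperedge of $H_G$ exactly once, including the gadgets for the edges of $G$ not on $C$; your sketch does not account for these. Third, you yourself flag the converse direction --- ruling out tours that use the cluster chains to reconnect a disconnected $2$-factor --- as an unresolved ``delicate part.'' A proof that openly defers the definition of its central gadget and the verification of its key rigidity property is not a proof; as it stands there is no way to check that any gadget simultaneously achieving $2$-regularity, linearity, $3$-uniformity, and the required rigidity exists.

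The paper (following Lonc and Naroski) avoids all of this by reducing from \textsc{Hamilton Cycle} on \emph{cubic} graphs, which is known to be NP-complete. Given a simple cubic graph $G$, one takes $H$ to be the dual of $G$: the vertices of $H$ are the edges of $G$, and for each $v\in V(G)$ the hyperedge $e_v$ is the set of edges of $G$ incident with $v$. Cubicity gives $3$-uniformity, the fact that each edge of $G$ has two endpoints gives $2$-regularity, and simplicity of $G$ gives linearity --- no gadgets at all. Because $H$ is $2$-regular, each vertex of $H$ can serve as an anchor of a closed strict trail at most once, so an Euler tour $e_0 e_{v_1} e_1 \cdots e_{v_{n}} e_0$ of $H$ reads off directly as a Hamilton cycle of $G$, and conversely. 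If you want to salvage your approach, the cleanest fix is to restrict the source problem to cubic graphs and observe that the dual already lands in ${\cal LH}_3^2$, making your clusters and middle vertices unnecessary.
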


\begin{proof} \cite{LonNar}
Clearly, {\sc  Euler Tour} is in the class NP since a potential solution can be verified in time that is polynomial in the number of edges, and hence also in the bigness of the hypergraph.

Let $G=(V_G,E_G)$ be a simple cubic graph. Define a hypergraph $H=(V_H,E_H)$ as follows: $V_H=E_G$ and $E_H=\{ e_v: v\in V_G \}$, where for each vertex $v \in V_G$, we let $e_v$ be the set of edges of $G$ incident with $v$. (In other words, $H$ is the dual of $G$.) Observe that $H$ is linear, 2-regular, and 3-uniform. Now, if $C=v_0 e_1 v_1 e_2 \ldots v_{n-1} e_n v_0$ is a Hamilton cycle in $G$, then $e_1 e_{v_1} e_2 \ldots e_{v_{n-1}} e_n e_{v_0} e_1$ is an Euler tour of $H$. Conversely, suppose $T=e_0 e_{v_1} e_1 \ldots e_{v_{n-1}} e_{n-1} e_{v_n} e_0$ is an Euler tour of $H$. Then $C=v_1 e_1 \ldots v_{n-1} e_{n-1} v_n e_0 v_1$ is a Hamilton cycle in $G$.

These conversions are clearly polynomial in the number of vertices of $G$, and hence also in the size and bigness of $H$. Therefore, since {\sc Hamilton Cycle} is NP-complete on the set of all cubic graphs \cite{GarJohTar}, {\sc  Euler Tour} is NP-complete on the set ${\cal LH}_3^2$.
\end{proof}

Next, we define our second decision problem.

\begin{problem}
{\sc Euler Family}

{\sc Given:} A hypergraph $H$.

{\sc Decide:} Does $H$ have an Euler family?
\end{problem}

Below, we show that {\sc Euler Family} is a polynomial problem on the set of all hypergraphs. In the reduction, the following known polynomial problem \cite{Edm} will be used.

\begin{problem}
{\sc 1-Factor}

{\sc Given:} A graph $G$.

{\sc Decide:} Does $G$ have a 1-factor?
\end{problem}

For a graph $X$ and a function $f: V(X) \rightarrow \NN$, an {\em $f$-factor} of $X$ is a spanning subgraph $X'$ of $X$ such that $\deg_{X'}(v)=f(v)$ for all $v \in V(G)$. Bondy and Murty \cite{BonMur} describe a polynomial reduction, originally due to Tutte \cite{Tut}, of the problem of existence of an $f$-factor in a graph without loops to the problem of existence of a 1-factor in a graph. This reduction can be extended to graphs with loops as follows.

\begin{lemma}\label{lem:EFconversion}
Let $X=(V,E)$ be a graph obtained from a simple graph by adjoining $\ell(u)$ loops to each vertex $u$, where $\ell(u)$ is polynomial in the order of $X$. Let $f: V \rightarrow \NN$ be a function with $f(u) \le \deg_X(u)$ for all $u \in V$.

For each $v \in V$, construct a graph $Y_v$ as follows:
\begin{itemize}
\item The vertex set of $Y_v$ has a partition $\{ S_v, T_v, U_v \}$, with $|S_v|=\deg_{X}(v)-f(v)$, $|T_v|=\deg_{X}(v)-2\ell(v)$, and $|U_v|=2\ell(v)$.
\item The edge set of $Y_v$ consists of all edges of the form $uv$ for $u\in S_v$ and $v \in T_v \cup U_v$, as well as a perfect matching on the set $U_v$.
\end{itemize}
A graph $X_f$ is obtained from $X$ by taking the vertex-disjoint union of the graphs $Y_v$, for all $v \in V$, and inserting a single linking edge with one endpoint in $T_u$ and the other in $T_v$ if and only if $uv \in E$ and $u \ne v$. The endpoints of these edges are chosen so that each vertex in $T_v$, for each $v \in V$, is an endpoint of exactly one of these linking edges.

A 1-factor in $X_f$ then corresponds to an $f$-factor in $X$, and vice-versa, and this conversion is polynomial in the order of $X$.
\end{lemma}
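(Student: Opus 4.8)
The plan is to verify that the construction reproduces, at each vertex $v$, Tutte's classical gadget for $f$-factors (modified to absorb loops), and then check that the two key properties that make such gadgets work are preserved: every $1$-factor of $X_f$ restricts to an $f$-factor of $X$, and every $f$-factor of $X$ extends to a $1$-factor of $X_f$. The only genuinely new feature compared with the loopless reduction in \cite{BonMur, Tut} is the presence of the loop-sets $U_v$, so the bulk of the argument is bookkeeping about how the loops at $v$ get encoded; the degree/polynomiality claim is immediate once the construction is pinned down.

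First I would set up notation: for $v \in V$, write $d(v) = \deg_X(v)$ (so each loop contributes $2$ to $d(v)$, and $d(v) = \ell(v)\cdot 2 + (\text{number of non-loop edges at }v)$, whence $|T_v| = d(v) - 2\ell(v)$ is exactly the number of non-loop edge-ends at $v$). The linking edges of $X_f$ are then in bijection with the non-loop edges of $X$, with the $i$-th edge-end at $v$ attached to a distinct vertex of $T_v$; this bijection is forced because $|T_v|$ equals the number of such ends and each vertex of $T_v$ receives exactly one linking edge. Next I would record the degrees inside $Y_v$: each vertex of $S_v$ is joined to all of $T_v \cup U_v$, hence has degree $d(v)$ in $Y_v$ (and no other edges, since $S_v$-vertices are not endpoints of linking edges); each vertex of $T_v$ has degree $|S_v| = d(v) - f(v)$ inside $Y_v$ plus exactly one linking edge; each vertex of $U_v$ has degree $|S_v| = d(v)-f(v)$ inside $Y_v$ plus one matching edge within $U_v$, and no linking edges.

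Now the forward direction. Let $M$ be a $1$-factor of $X_f$. In $Y_v$, the set $S_v$ is independent and every $S_v$-vertex must be matched into $T_v \cup U_v$; since the $U_v$-vertices already have their own perfect matching available, I would argue (exactly as in Tutte's proof) that in any $1$-factor, $M$ restricted to $Y_v$ matches $S_v$ entirely into $T_v \cup U_v$, matches the $U_v$-vertices not hit by $S_v$ via the internal perfect matching on $U_v$ — which forces an even number, hence all of them if $S_v$ misses $U_v$, and in general a ``linked pairs'' structure — and leaves exactly $|T_v \cup U_v| - |S_v| = f(v)$ vertices of $T_v \cup U_v$ to be matched outside $Y_v$. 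Those vertices can only be matched by linking edges, and linking edges only touch $T$-vertices, so in fact exactly $f(v)$ vertices of $T_v$ are matched by linking edges. Declaring an edge $uv \in E$ to be in the $f$-factor iff its corresponding linking edge lies in $M$ (and for a loop at $v$, using the internal $U_v$-matching to detect it) yields a spanning subgraph of $X$ in which $v$ has degree $f(v)$. The reverse direction runs the same dictionary backwards: given an $f$-factor $F$ of $X$, put the $f(v)$ linking edges corresponding to the chosen non-loop edges at $v$ into $M$; this leaves $|T_v| - f(v) + |U_v| = d(v) - 2\ell(v) - f(v) + 2\ell(v) = d(v) - f(v) = |S_v|$ unmatched vertices in $T_v \cup U_v$ (after also deciding the $U_v$ internal matching according to which loops at $v$ are chosen), and since $S_v$ is completely joined to $T_v \cup U_v$, Hall's condition is trivially met and we can match $S_v$ onto them bijectively. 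Doing this at every $v$ and taking the union of all these local matchings with the chosen linking edges gives a $1$-factor of $X_f$.

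The main obstacle — and the one place to be careful rather than hand-wavy — is the interaction between $S_v$, $U_v$, and the internal perfect matching on $U_v$: one must make sure the parity constraint coming from ``$U_v$ has a perfect matching on it, and a $1$-factor of $X_f$ restricted to $U_v \cup S_v$ must cover $U_v$'' does not over- or under-constrain the count of $T_v$-vertices freed for linking edges, and that loops really are encoded faithfully (each loop at $v$ corresponds to one matching edge inside $U_v$, so $\ell(v)$ loops use $2\ell(v)$ vertices of $U_v$, consistent with $|U_v| = 2\ell(v)$). Once that accounting is confirmed, polynomiality is clear: $|V(X_f)| = \sum_v (|S_v| + |T_v| + |U_v|) = \sum_v \big((d(v)-f(v)) + (d(v)-2\ell(v)) + 2\ell(v)\big) = \sum_v (2d(v) - f(v))$, which is polynomial in the order of $X$ because $d(v) \le (\text{order}) + 2\ell(v)$ and $\ell(v)$ is assumed polynomial; and the whole graph $X_f$ is constructed in time polynomial in its size. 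This establishes the correspondence and the complexity bound, completing the proof.
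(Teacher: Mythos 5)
Your overall strategy is the same as the paper's: analyze each gadget $Y_v$ locally and count how a $1$-factor of $X_f$ distributes the vertices of $S_v$, $T_v$, $U_v$ among linking edges, edges into $S_v$, and internal $U_v$-matching edges. But the one computation you defer --- the interaction between $S_v$, $U_v$, and the internal matching on $U_v$ --- is exactly where your sketch goes wrong, and it is the only content of the lemma beyond the standard loopless reduction of Tutte. In the forward direction you conclude that ``exactly $f(v)$ vertices of $T_v$ are matched by linking edges.'' That is false whenever the $1$-factor $M$ uses $\ell_f(v) > 0$ edges internal to $U_v$: of the $|T_v \cup U_v| - |S_v| = f(v)$ vertices of $T_v \cup U_v$ not matched into $S_v$, the $2\ell_f(v)$ of them that lie in $U_v$ are matched \emph{inside} $Y_v$, so only $f(v) - 2\ell_f(v)$ vertices of $T_v$ carry linking edges. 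With your count, putting both the $f(v)$ alleged linking edges and the $\ell_f(v)$ detected loops into the candidate $f$-factor would give $v$ degree $f(v) + 2\ell_f(v) \neq f(v)$. The same conflation appears in the reverse direction (``the $f(v)$ linking edges corresponding to the chosen non-loop edges at $v$'': there are only $f(v) - 2\ell_f(v)$ such edges when $F$ uses $\ell_f(v)$ loops at $v$), although there the two errors of size $2\ell_f(v)$ happen to cancel in your final tally of $|S_v| = \deg_X(v) - f(v)$ unsaturated vertices.

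The paper closes this gap with an explicit system of equations: setting $\tau(v)$, $\sigma(v)$, $\nu(v)$ to be the numbers of $M$-edges joining $T_v$ to some $T_u$ with $u \ne v$, joining $T_v$ to $S_v$, and joining $S_v$ to $U_v$, respectively, one reads off $|T_v| = \tau(v)+\sigma(v)$, $|S_v| = \sigma(v)+\nu(v)$, and $|U_v| = \nu(v)+2\ell_f(v)$, and solving gives $f(v) = \tau(v) + 2\ell_f(v)$, which is precisely the degree of $v$ in the recovered subgraph (one per linking edge, two per loop). You should replace your count with this, or equivalent, bookkeeping; your degree computations inside $Y_v$, the bijection between linking edges and non-loop edges of $X$, and the polynomiality argument are all fine.
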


\begin{proof}
First, observe that the graph $X_f$ is well defined: each vertex $v \in V$ has exactly  $\deg_X(v)-2\ell(v)=|T_v|$ neighbours $u \ne v$ in $X$, and so it is indeed possible to join each vertex of $T_v$ to exactly one vertex in some $T_u$ such that $u \ne v$ and $uv \in E$.

Next, we show that a 1-factor in $X_f$ corresponds to an $f$-factor in $X$. Let $M_f$ be the edge set of a 1-factor (that is, a perfect matching) in $X_f$. For each $v \in V$, let $\ell_f(v)$ denote the number of edges of $M_f$ with both ends in $U_v$. Then let $E'$ be the subset of $E$ containing all edges
$uv$ such that $u, v \in V$, $u \ne v$, for which there exist $x \in T_u$ and $y \in T_v$ with $xy\in M_f$; in addition, let $E'$ contain  exactly $\ell_f(v)$ loops incident with $v$ for each vertex $v \in V$.

Let $F=(V,E')$. We claim $F$ is an $f$-factor of $X$. Fix any vertex $v\in V$. Let $\tau(v)$ be the number of edges of $M_f$ with one endpoint in $T_v$ and the other in a set $T_u$ for all $u \ne v$, and observe that $\deg_F(v)=\tau(v) +2\ell_f(v)$. Furthermore, let $\sigma(v)$ be the number of edges of $M_f$ with one endpoint in $T_v$ and the other in $S_v$, and let $\nu(v)$ be the number of edges of $M_f$ with one endpoint in $S_v$ and the other in $U_v$. Then
\begin{eqnarray*}
|T_v| &=& \deg_X(v)-2\ell(v)=\tau(v)+\sigma(v), \\
|S_v| &=& \deg_X(v)-f(v) = \sigma(v)+\nu(v), \\
|U_v|&=& 2\ell(v)= \nu(v)+ 2\ell_f(v),
\end{eqnarray*}
which yields $f(v)=\tau(v)+2\ell_f(v)=\deg_F(v)$. We conclude that, indeed, $F$ is an $f$-factor of $X$.

Conversely, take any $f$-factor $F$ of $X$, and construct a subset $M_f$ of edges of $X_f$ as follows.
\begin{itemize}
\item[(1)] For each $u,v \in V$, $u \ne v$, if $uv \in E(F)$, then let $M_f$ contain the unique edge with one endpoint in $T_u$ and the other in $T_v$.
\item[(2)] For each $v \in V$, if $E(F)$ contains $\ell_f(v)$ loops incident with $v$, then let $M_f$ contain $\ell_f(v)$ edges with both ends in $U_v$.
\item[(3)] For each $v \in V$, let $M_f$ contain $|S_v|$ independent edges with one endpoint in $S_v$ and the other in $U_v \cup T_v$.
\end{itemize}
We now show that the edges  in (3) can be chosen to be independent from the edges chosen in (1) and (2). Indeed, in (1), for each $v \in V$, $f(v)-2\ell_f(v)$ edges with one endpoint in $T_v$ were chosen, and in (2), $2\ell_f(v)$ edges with both ends in $U_v$ were put into $M_f$. This leaves
$$|T_v|-\left(f(v)-2\ell_f(v)\right)=\left(\deg_X(v)-2\ell(v)\right)-\left(f(v)-2\ell_f(v)\right)$$ vertices in $T_v$, and
$$|U_v|-2\ell_f(v)=2\ell(v)-2\ell_f(v)$$
vertices in $U_v$ unsaturated.
Since
$$\left(\deg_X(v)-2\ell(v)\right)-\left(f(v)-2\ell_f(v)\right)+\left(2\ell(v)-2\ell_f(v)\right)=\deg_X(v)-f(v)=|S_v|,$$
the edges in (3) can be chosen so that $M_f$ is an independent set and  every vertex in $X_f$ is $M_f$-saturated. We conclude that $(V(X_f),M_f)$ is a 1-factor in $X_f$.

Since for each $v \in V$, the number of loops $\ell(v)$ incident with $v$ is polynomial in the order of $X$, these conversions are polynomial in the order of $X$.
\end{proof}

%Note that the lemma above can be further extended to graphs with multiple edges (in addition to loops), as long as the multiplicity of each edge is polynomial in the order of the graph.

\begin{theo}\label{the:EFcomplexity}
Let ${\cal H}$ be the family of all hypergraphs. Then {\sc Euler Family} is polynomial on ${\cal H}$.
\end{theo}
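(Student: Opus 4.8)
The plan is to reduce {\sc Euler Family} to {\sc 1-Factor} through the incidence-graph characterization of Theorem~\ref{the:ET-G}, using Lemma~\ref{lem:EFconversion} as the bridge. First I would dispose of the degenerate cases. If $H$ has an edge $e$ with $|e|\le 1$ (in particular, an empty edge), then $H$ is not quasi-eulerian by Lemma~\ref{lem:CE-0} (equivalently, the corresponding e-vertex of $\G(H)$ has degree less than $2$, so no even subgraph of $\G(H)$ can be $2$-regular on $E$), and we output ``no''. Otherwise every edge of $H$ has at least two vertices; and since a closed strict trail is contained in a single connected component, an Euler family of $H$ restricts to an Euler family on each component, and conversely Euler families of the components (whose trails are automatically anchor-disjoint across components) union to an Euler family of $H$. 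Hence we may treat each non-trivial connected component $C$ separately and answer ``yes'' iff every such $C$ is quasi-eulerian. The number of components is at most $|V(H)|$.

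Fix a non-trivial component $C$ with incidence graph $G_C=\G(C)$; all edges of $C$ have size at least $2$. By Theorem~\ref{the:ET-G}, $C$ is quasi-eulerian if and only if $G_C$ has a spanning subgraph $G'$ with $\deg_{G'}(e)=2$ for every e-vertex $e$ and $\deg_{G'}(v)$ even for every v-vertex $v$. The crux is to replace this parity-constrained subgraph problem by an exact-degree ($f$-factor) problem, absorbing the parity slack at v-vertices with loops. Form $X$ from $G_C$ by adjoining $\ell(v):=\lfloor \deg_{G_C}(v)/2\rfloor$ loops at each v-vertex $v$ and no loops at e-vertices, and set $f(e):=2$ for each e-vertex and $f(v):=2\lfloor \deg_{G_C}(v)/2\rfloor$ for each v-vertex; since every e-vertex has degree $|e|\ge 2$ in $X$ and every v-vertex has degree $\deg_{G_C}(v)+2\ell(v)\ge 2\ell(v)=f(v)$, the hypotheses of Lemma~\ref{lem:EFconversion} are met. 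I would then verify that $G_C$ admits the desired subgraph if and only if $X$ admits an $f$-factor: from $G'$, adjoining $(f(v)-\deg_{G'}(v))/2$ loops at each $v$ --- a non-negative integer bounded by $\ell(v)$, precisely because $f(v)$ is the largest even integer not exceeding $\deg_{G_C}(v)$ and $\ell(v)=f(v)/2$ --- yields an $f$-factor of $X$; conversely the loopless part of any $f$-factor of $X$ has degree exactly $2$ at each e-vertex and, since $f(v)$ is even, even degree at each v-vertex, hence is a subgraph of $G_C$ of the required kind.

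Finally, Lemma~\ref{lem:EFconversion} applies (each $\ell(v)\le \deg_{G_C}(v)$ is at most the order of $X$) and converts the existence of an $f$-factor in $X$ into the existence of a $1$-factor in a graph $X_f$ whose size is polynomial in that of $X$; and {\sc 1-Factor} is polynomial by \cite{Edm}. Since $\G(H)$, the component decomposition, and each resulting $X$ and $X_f$ all have size polynomial in $|V(H)|+|E(H)|$, hence in the bigness of $H$, and there are at most $|V(H)|$ components, the entire procedure runs in time polynomial in the bigness of $H$.

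The step I expect to be the main obstacle is the middle one: calibrating $\ell(v)$ and $f(v)$ so that the number of available loops at each v-vertex is exactly enough to realize every admissible even degree there (and no more, so the reduction is faithful in both directions), together with the bookkeeping needed to confirm that the degenerate configurations --- empty edges, edges of size one, trivial components, and single-edge components --- are all handled correctly. The factor-to-matching machinery itself is entirely black-boxed via Lemma~\ref{lem:EFconversion} and Edmonds' algorithm, so it poses no difficulty.
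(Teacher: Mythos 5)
Your proposal is correct and follows essentially the same route as the paper: reduce via Theorem~\ref{the:ET-G} to finding an even subgraph of the incidence graph that is $2$-regular on $E$, append $\lfloor \deg_G(v)/2\rfloor$ loops at each v-vertex with $f(e)=2$ and $f(v)=2\lfloor \deg_G(v)/2\rfloor$, and invoke Lemma~\ref{lem:EFconversion} together with Edmonds' algorithm. Your extra bookkeeping (splitting into connected components and pre-screening edges of size at most one, which would violate the hypothesis $f(e)\le\deg(e)$ of Lemma~\ref{lem:EFconversion}) is a harmless refinement of the same argument.
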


\begin{proof}
Let $H=(V,E)$ be a hypergraph, and $G$ its incidence graph. By Theorem~\ref{the:ET-G}, $H$ admits an Euler family if and only if $G$ has an even  subgraph $G'$ that is 2-regular on $E$; in this proof, we shall call such a subgraph $G'$ an {\em EF-factor} of $G$.

Starting from $G$, construct a graph $G^*$ by appending $\lfloor \frac{\deg_G(v)}{2} \rfloor$ loops to each $v \in V$. Then define a function $f: V \cup E \rightarrow \NN$ by $f(e)=2$ for all $e \in E$, and $f(v)=2\lfloor \frac{\deg_G(v)}{2} \rfloor$ for all $v \in V$.

We claim that $G$ has an EF-factor $G'$ if and only if $G^*$ has an $f$-factor. Indeed, take an EF-factor $G'$ of $G$. Appending $\frac{1}{2}(f(v)-\deg_{G'}(v))$ loops to each vertex $v \in V$ results in an $f$-factor of $G^*$. Conversely, removing the loops from any $f$-factor of $G^*$ will result in an EF-factor $G'$ of $G$. This conversion is clearly polynomial in the order of $G$, and hence in the bigness of $H$.

By Lemma~\ref{lem:EFconversion} and \cite{Edm}, the problem of finding an $f$-factor in the graph $G^*$ is polynomial in the order of $G^*$. Hence the problem of finding an Euler family in $H$ is polynomial in the bigness of $H$. We conclude that {\sc Euler Family} is polynomial on ${\cal H}$.
\end{proof}

Note that the reduction to the problem of a $1$-factor in a graph as described in Lemma~\ref{lem:EFconversion}, together with Edmonds' Algorithm \cite{Edm} for finding a maximum matching in an arbitrary graph, gives us a polynomial-time algorithm for constructing an Euler family in a quasi-eulerian hypergraph. We should also mention that in the proof of Theorem~\ref{the:EFcomplexity}, instead of Lemma~\ref{lem:EFconversion}, we could have used the fact that the general  $f$-factor problem  is polynomial \cite[Theorem 6.2]{AnsJAlg85}.

%%%%%%%%%%%%%%%%%%%%%%%%%%%%%%%%%%
% Euler families: NAS conditions %
%%%%%%%%%%%%%%%%%%%%%%%%%%%%%%%%%%

\subsection{Quasi-eulerian hypergraphs: necessary and sufficient conditions}\label{NAC}

Theorem~\ref{the:ET-G} shows that a hypergraph $H=(V,E)$ admits an Euler family if and only if its incidence graph $\G(H)$ has an even  subgraph  that is 2-regular on $E$. We shall now combine this observation with Lovasz's Theorem~\ref{the:Lovasz} (below) to give more easily verifiable necessary and sufficient conditions.

For a graph $G$ and functions $f,g: V(G) \rightarrow \NN$,  a {\em $(g,f)$-factor} of $G$ is a spanning subgraph $F$ of $G$ such that $g(x) \le \deg_F(x) \le f(x)$ for all $x \in V(G)$. An $f$-factor is then simply an $(f,f)$-factor.  For any subgraph $G_1$ of $G$ and any sets $V_1, V_2 \subseteq V(G)$, let $\eps_{G_1}(V_1,V_2)$ denote the number of edges of $G_1$ with one end in $V_1$ and the other in $V_2$.

\begin{theo} \cite{Lov} \label{the:Lovasz}
Let G be a graph and let $f,g : V (G) \rightarrow \NN$ be functions such that
$g(x) \le f(x)$ and $g(x) \equiv f(x) \pmod{2}$ for all $x \in V (G)$. Then G has a $(g, f)$-factor $F$ such that $\deg_F(x) \equiv f(x) \pmod{2}$ for all $x \in V(G)$ if and only if all disjoint subsets $S$ and $T$ of $V(G)$ satisfy
\begin{equation}\label{eq:Lov}
\sum_{x \in S} f(x) + \sum_{x \in T} (\deg_G(x)-g(x))- \eps_G(S,T)-q(S,T) \ge 0,
\end{equation}
where $q(S,T)$ is the number of connected components $C$ of $G \setminus (S \cup T)$ such that $$\sum_{x \in V(C)} f(x) + \eps_G(V(C),T) \quad \mbox{is odd}.$$
\end{theo}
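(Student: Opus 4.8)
Since this is a theorem of Lov\'asz, I only sketch a route, and the natural one is to pass through a perfect‑matching problem. The plan is to construct an auxiliary graph $G^{*}$ such that $G$ has a $(g,f)$-factor $F$ with $\deg_F(x)\equiv f(x)\pmod 2$ for all $x$ if and only if $G^{*}$ has a perfect matching; then apply Tutte's $1$-factor theorem to $G^{*}$ (see, e.g., \cite{BonMur}); and finally prove that Tutte's deficiency condition for $G^{*}$ holds if and only if inequality~(\ref{eq:Lov}) holds for all disjoint $S,T\subseteq V(G)$. Chaining these three equivalences yields the theorem, so no separate treatment of the ``easy'' direction is needed.

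\emph{The reduction.} For each $v\in V(G)$ with $d=\deg_G(v)$, replace $v$ by a gadget $D_v$ carrying $d$ distinguished \emph{contact} vertices, one for each edge-end at $v$, together with an internal structure obtained by combining Tutte's complete-bipartite exact-degree gadget \cite{Tut} with a small ``slack'' component, arranged so that for every subset $S$ of the contact set, $D_v-S$ has a perfect matching precisely when $|S|\in\{g(v),g(v)+2,\dots,f(v)\}$; one may assume $g(v)\le f(v)\le d$ here, since otherwise the instance of~(\ref{eq:Lov}) with $T=\{v\}$ already decides the theorem for $v$. Each edge $uv$ of $G$ is then subdivided by two new vertices $x_{uv},x'_{uv}$, joined to the appropriate contacts of $D_u$ and $D_v$. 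For a perfect matching $M$ of $G^{*}$, setting $F=\{uv:x_{uv}x'_{uv}\notin M\}$ makes the number of contacts of $D_v$ matched outward equal to $\deg_F(v)$, so by the gadget property $F$ is exactly a $(g,f)$-factor with the prescribed degree parities; conversely every such factor is realized by a perfect matching of $G^{*}$.

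\emph{Translating Tutte's condition.} Tutte's theorem states that $G^{*}$ has a perfect matching if and only if $o(G^{*}-X)\le|X|$ for every $X\subseteq V(G^{*})$, where $o$ counts odd components. Given disjoint $S,T\subseteq V(G)$, form the ``canonical'' set $X^{*}\subseteq V(G^{*})$ containing, for $v\in S$, all of $D_v$; for $v\in T$, the $\deg_G(v)-g(v)$ ``deficiency'' contacts of $D_v$; and, for each edge with an endpoint in $T$, an appropriate one of its two subdivision vertices. A direct count then gives $|X^{*}|-o(G^{*}-X^{*})$ equal to the left-hand side of~(\ref{eq:Lov}): the term $-\eps_G(S,T)$ records the edge-paths joining $S$ to $T$, and $-q(S,T)$ records exactly those components of $G\setminus(S\cup T)$ whose union of gadgets and incident edge-paths forms an odd component of $G^{*}-X^{*}$. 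Hence a failure of~(\ref{eq:Lov}) produces a failure of Tutte's condition. For the converse, an uncrossing/shifting argument shows that any $X$ violating Tutte's condition may be replaced, without decreasing $|X|-o(G^{*}-X)$, by one of canonical form (local moves inside a single $D_v$, or along a single edge-path, are never harmful), and such an $X$ then exhibits disjoint $S,T$ violating~(\ref{eq:Lov}).

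I expect the two genuine obstacles to be: (i) constructing the vertex gadget $D_v$ that realizes exactly the allowed set $\{g(v),g(v)+2,\dots,f(v)\}$ while being symmetric in its contacts, so that the matching test depends only on $|S|$ and not on which contacts $S$ consists of; and (ii) the bookkeeping in the last step — the uncrossing reduction of an arbitrary Tutte-violating $X$ to canonical form, and the precise verification that $q(S,T)$ as defined in the statement coincides with the number of odd components of $G^{*}-X^{*}$ contributed by $G\setminus(S\cup T)$. The remaining steps are formal manipulations standard in factor theory.
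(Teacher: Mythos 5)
The paper does not prove this statement at all: it is quoted verbatim from Lov\'asz \cite{Lov} and used as a black box (its only role is to feed Corollary~\ref{cor:Lovasz}), so there is no in-paper argument to compare yours against. Your proposed route --- replace each vertex by a degree-selecting gadget, reduce to a perfect matching in an auxiliary graph $G^{*}$, and translate Tutte's condition $o(G^{*}-X)\le |X|$ into inequality~(\ref{eq:Lov}) --- is indeed the classical Tutte-style reduction by which parity $(g,f)$-factor theorems are usually derived, so the strategy itself is sound.

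However, as written this is an outline rather than a proof, and the two items you defer as ``obstacles'' are exactly where all of the mathematical content lives. First, you never construct the gadget $D_v$: Tutte's complete-bipartite gadget realizes only the exact value $|S|=f(v)$, and extending it so that $D_v-S$ is perfectly matchable precisely when $|S|\in\{g(v),g(v)+2,\dots,f(v)\}$, symmetrically in the contacts, is a genuine construction that must be exhibited and verified, not merely named. Second, the translation of Tutte's condition into~(\ref{eq:Lov}) is asserted in one sentence; in particular the reverse direction --- that an arbitrary Tutte-violating set $X$ can be ``uncrossed'' to canonical form without decreasing $|X|-o(G^{*}-X)$ --- is the hardest step of any such proof and is not even sketched, nor is the claimed coincidence between $q(S,T)$, defined via the parity of $\sum_{x\in V(C)}f(x)+\eps_G(V(C),T)$, and the number of odd components of $G^{*}-X^{*}$ actually checked. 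A smaller point: your remark that the case $f(v)>\deg_G(v)$ can be dismissed because ``the instance with $T=\{v\}$ decides the theorem'' is not right as stated, since truncating $f(v)$ changes both $\sum_{x\in S}f(x)$ and the parities defining $q(S,T)$, so the equivalence of condition~(\ref{eq:Lov}) before and after truncation itself requires an argument. None of this matters for the paper, which cites the theorem rather than proving it, but your text should not be mistaken for a proof of Lov\'asz's theorem.
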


\begin{cor}\label{cor:Lovasz}
Let $H=(V,E)$ be a hypergraph and $G=\G(H)$ its incidence graph. Then $H$ is quasi-eulerian if and only if  all disjoint sets $S \subseteq E$ and $T\subseteq V \cup E$ of $V(G)$ satisfy
\begin{equation}\label{eq:LovH}
2|S| + \sum_{x \in T} \deg_G(x)  - 2|T \cap E|  - \eps_G(S,T \cap V)-q(S,T) \ge 0,
\end{equation}
where $q(S,T)$ is the number of connected components $C$ of $G \setminus (S \cup T)$ such that $\eps_G(V(C),T)$ is odd.
\end{cor}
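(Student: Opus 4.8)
The plan is to apply Lov\'asz's Theorem~\ref{the:Lovasz} to the incidence graph $G=\G(H)$, via the characterization of quasi-eulerian hypergraphs in Theorem~\ref{the:ET-G}. By that theorem (applied to each connected component of $H$ if necessary), $H$ is quasi-eulerian if and only if $G$ has a spanning subgraph $F$ with $\deg_F(e)=2$ for every $e\in E$ and $\deg_F(v)$ even for every $v\in V$; note an even subgraph that is $2$-regular on $E$ already contains every $e$-vertex, and we may adjoin any missing $v$-vertices as isolated vertices to make it spanning. To realize such an $F$ through Theorem~\ref{the:Lovasz}, I would take $f,g:V\cup E\to\NN$ with $f(e)=g(e)=2$ for $e\in E$, and $g(v)=0$, $f(v)=2\lfloor\deg_G(v)/2\rfloor$ for $v\in V$. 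Then $g(x)\le f(x)$ and $g(x)\equiv f(x)\pmod 2$ for all $x$ (every value is even), so Theorem~\ref{the:Lovasz} applies, and a $(g,f)$-factor $F$ with $\deg_F(x)\equiv f(x)\pmod 2$ for all $x$ is precisely a subgraph of the required kind: for $e\in E$ the bounds $2\le\deg_F(e)\le 2$ force $\deg_F(e)=2$, and for $v\in V$ the constraints $0\le\deg_F(v)\le 2\lfloor\deg_G(v)/2\rfloor$ together with the parity condition force $\deg_F(v)$ to be an even integer in $[0,\deg_G(v)]$; conversely every such subgraph is visibly a $(g,f)$-factor with the prescribed parities. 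Hence $H$ is quasi-eulerian if and only if~(\ref{eq:Lov}) holds for all disjoint $S,T\subseteq V(G)$.

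It remains to rewrite~(\ref{eq:Lov}) into~(\ref{eq:LovH}). Since every value of $f$ is even, $\sum_{x\in V(C)}f(x)$ is even for every subgraph $C$, so $q(S,T)$ in Theorem~\ref{the:Lovasz} reduces to the number of connected components $C$ of $G\setminus(S\cup T)$ with $\eps_G(V(C),T)$ odd, as in the statement of Corollary~\ref{cor:Lovasz}. Substituting our $f$ and $g$, splitting $T$ into its $v$-vertices and its $e$-vertices, and using that $G$ is bipartite with parts $V$ and $E$ (so that any edge meeting $S\subseteq E$ has its other end in $V$, whence $\eps_G(S,T)=\eps_G(S,T\cap V)$), one checks that for $S\subseteq E$ inequality~(\ref{eq:Lov}) is literally inequality~(\ref{eq:LovH}). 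So the corollary follows once I show that it suffices to verify~(\ref{eq:Lov}) for $S\subseteq E$ only.

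This last reduction is the step I expect to require the most care. I would show that deleting a $v$-vertex $v$ from $S$ never increases the left-hand side of~(\ref{eq:Lov}), so that for arbitrary disjoint $S,T$ its value is at least its value at $(S\cap E,T)$, which is nonnegative by hypothesis; iterating over the $v$-vertices of $S$ finishes the proof. Writing $S'=S\setminus\{v\}$, only the $f$-sum, the term $\eps_G(S,T)$, and $q$ change, and since $G\setminus(S\cup T)=\bigl(G\setminus(S'\cup T)\bigr)-v$, the change in the left-hand side is
\[
\Delta=f(v)-\eps_G(\{v\},T)-\bigl(q(S,T)-q(S',T)\bigr).
\]
Let $d_0=\deg_G(v)-\eps_G(\{v\},S')-\eps_G(\{v\},T)$ be the number of neighbours of $v$ in $G\setminus(S'\cup T)$. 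Deleting $v$ breaks its component $C_v$ into at most $d_0$ pieces $D_1,\ldots,D_s$ (each contains a former neighbour of $v$), so $q(S,T)-q(S',T)\le d_0$; combined with $f(v)\ge\deg_G(v)-1$ and $\eps_G(\{v\},T)+d_0\le\deg_G(v)$, this already gives $\Delta\ge-1$. To upgrade this to $\Delta\ge 0$ I would track parities via $\eps_G(V(C_v),T)=\eps_G(\{v\},T)+\sum_{j}\eps_G(V(D_j),T)$: equality $\Delta=-1$ would force $\deg_G(v)$ odd, $\eps_G(\{v\},S')=0$, $s=d_0$, every $D_j$ with $\eps_G(V(D_j),T)$ odd, and $C_v$ itself not counted in $q(S',T)$; but then $\eps_G(V(C_v),T)\equiv\eps_G(\{v\},T)+d_0=\deg_G(v)$ is odd, so $C_v$ \emph{is} counted, a contradiction. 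Hence $\Delta\ge 0$ in every case, which is exactly what is needed, and the corollary follows.
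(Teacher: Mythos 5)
Your proposal is correct, and its overall architecture matches the paper's: both translate the problem via Theorem~\ref{the:ET-G} into the existence of a spanning subgraph of $G$ that is even on $V$ and $2$-regular on $E$, and both realize such a subgraph as a $(g,f)$-factor with prescribed parities so that Theorem~\ref{the:Lovasz} applies. The one genuine divergence is the choice of $f$ on the v-vertices and, consequently, how the restriction to $S \subseteq E$ is justified. The paper sets $f(v)=K$ for a sufficiently large even constant $K$, which makes the case $S \cap V \ne \emptyset$ of Condition~(\ref{eq:Lov}) hold trivially (the term $\sum_{x \in S} f(x)$ dominates $\eps_G(S,T)+q(S,T)$), so no further work is needed; the rest of the verification that (\ref{eq:Lov}) becomes (\ref{eq:LovH}) is then identical to yours, including the observation that $\sum_{x\in V(C)}f(x)$ is even and so drops out of the parity defining $q(S,T)$. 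You instead take the tight value $f(v)=2\lfloor \deg_G(v)/2\rfloor$, which characterizes exactly the same family of subgraphs but forfeits the trivial dismissal of v-vertices in $S$; you recover it with a deletion argument showing that removing a v-vertex from $S$ cannot increase the left-hand side of~(\ref{eq:Lov}), where the borderline case $\Delta=-1$ is ruled out by tracking the parity of $\eps_G(V(C_v),T)$ across the pieces into which $C_v$ splits. I checked that argument (including the degenerate case where $v$ has no neighbours outside $S'\cup T$) and it is sound. The trade-off is clear: the paper's device is shorter and essentially free, while your version avoids the ``sufficiently large constant'' trick and isolates a reusable monotonicity fact about Lov\'asz's deficiency expression, at the cost of a delicate parity analysis that the paper never needs.
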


\begin{proof}
By Theorem~\ref{the:ET-G}, $H$ has an Euler family if and only if $G$ has an even subgraph $G'$ that is 2-regular on $E$. Define functions $f,g: V \cup E \rightarrow \NN$ as follows:
$$g(x)=\left\{ \begin{array}{ll}
              0 & \mbox{ if } x \in V \\
              2 & \mbox{ if } x \in E
              \end{array} \right.
\qquad \mbox{ and } \qquad
f(x)=\left\{ \begin{array}{ll}
              K & \mbox{ if } x \in V \\
              2 & \mbox{ if } x \in E
              \end{array} \right.
              ,$$
where $K$ is a sufficiently large even integer.
Observe that $f$ and $g$ satisfy the assumptions of Theorem~\ref{the:Lovasz}. Moreover, a  subgraph $G'$ of $G$ with the required properties is a $(g,f)$-factor $F$ of $G$ with $\deg_F(x) \equiv f(x) \pmod{2}$ for all $x \in V(G)$, and conversely.

For any subsets $S$ and $T$ of $V \cup E$, if $S \cap V \ne \emptyset$, then $\sum_{x \in S} f(x)$ is very large, and Condition~(\ref{eq:Lov}) clearly holds for $S$ and $T$. Thus Theorem~\ref{the:Lovasz} asserts that $G$ has an $(f,g)$-factor if and only if Condition~(\ref{eq:Lov}) holds for all disjoint sets $S \subseteq E$ and $T\subseteq V \cup E$ of $V(G)$.

Observing that $\sum_{x \in V(C)} f(x) + \eps_G(V(C),T) \equiv \eps_G(V(C),T) \pmod{2}$, it is then straightforward to show that Condition~(\ref{eq:Lov}) in Theorem~\ref{the:Lovasz} is equivalent to Condition~(\ref{eq:LovH}) in the statement of this corollary. The result follows as claimed.
\end{proof}

To express the necessary conditions in Corollary~\ref{cor:Lovasz} in the language of the hypergraph itself, we introduce the following term. For a hypergraph $H=(V,E)$ and sets $V' \subseteq V$ and $E' \subseteq E$, the symbol $H[V',E']$ will denote the hypergraph with vertex set $V'$ and edge set $\{ e \cap V': e \in E'\}$. Observe that the incidence graph of $H[V',E']$ is then the subgraph of $\G(H)$ induced by the vertex set $V' \cup E'$.

\begin{cor}\label{cor:LovaszH}
A hypergraph $H=(V,E)$  is quasi-eulerian if and only if every subset $V' \subseteq V$ and all disjoint subsets $E', E''\subseteq E$ satisfy
\begin{equation}\label{eq:LovH2}
2|E''| + \sum_{v \in V'} \deg_H(v) + \sum_{e \in E'} |e| - 2|E'| - |F(H[V',E''])| - q_H(E'',V' \cup E') - q_e(V') \ge 0,
\end{equation}
where $q_H(E'',V' \cup E')$ is the number of connected components $C$ of $(H-(E' \cup E'')) \setminus V'$ such that
$|F(H[V(C),E'])|+|F(H[V',E(C)])|$ is odd, and
$q_e(V')$ is the number of edges $e \in E-(E' \cup E'')$ such that $e \subseteq V'$ and $|e|$ is odd.
\end{cor}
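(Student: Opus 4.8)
The plan is to translate Corollary~\ref{cor:Lovasz}, which is phrased in terms of the incidence graph $G = \G(H)$, into the language of $H$ itself, by carefully matching up each quantity appearing in Condition~(\ref{eq:LovH}) with its hypergraph counterpart. Corollary~\ref{cor:Lovasz} says $H$ is quasi-eulerian if and only if every pair of disjoint sets $S \subseteq E$ and $T \subseteq V \cup E$ satisfies~(\ref{eq:LovH}). I would split $T = (T \cap V) \cup (T \cap E)$ and rename: set $V' = T \cap V \subseteq V$, set $E'' = T \cap E \subseteq E$ (so $S$ and $E''$ are the two disjoint subsets of $E$ in the statement, with $S$ playing the role of $E'$), so that $S = E'$. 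The dictionary is then: $2|S| = 2|E'|$ --- wait, the corollary has $2|S|$ and also $-2|T \cap E|$, so after renaming these become $2|E'|$ and $-2|E''|$; but the target inequality~(\ref{eq:LovH2}) has $+2|E''|$ and $-2|E'|$. This sign flip is expected because a $(g,f)$-factor with $g=2, f=2$ on edges is symmetric, but more to the point it suggests that one should swap the roles --- that is, identify $S$ (the set on which $f$ is summed) with $E''$ and $T \cap E$ with $E'$. I would reconcile this by being careful about which of the two disjoint edge-sets in Lovász's theorem carries which coefficient, and then fix the naming once so that the final statement reads cleanly; the underlying content is unchanged.

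With the naming fixed, the main work is to rewrite each of the remaining four terms. First, $\sum_{x \in T} \deg_G(x)$: for $x = v \in V'$ we have $\deg_G(v) = \deg_H(v)$, and for $x = e \in E'$ we have $\deg_G(e) = |e|$, so this term becomes $\sum_{v \in V'} \deg_H(v) + \sum_{e \in E'} |e|$. Second, $\eps_G(S, T \cap V)$: after the renaming this is $\eps_G(E'', V')$, the number of edges of $G$ between the $e$-vertices in $E''$ and the $v$-vertices in $V'$; an edge $ve$ of $G$ with $v \in V'$, $e \in E''$ exists iff $v \in e \cap V'$, i.e., iff $(v, e \cap V')$ is a flag of $H[V', E'']$. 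Hence $\eps_G(E'', V') = |F(H[V', E''])|$, which is exactly the term in~(\ref{eq:LovH2}). Third, the component count $q(S,T)$: the connected components of $G \setminus (S \cup T) = G \setminus (E' \cup E'' \cup V')$ correspond (using Theorem~\ref{the:conn} and the results of \cite{BahSaj} on subhypergraphs, as already invoked in the proofs of Theorem~\ref{the:cut-edges} and Theorem~\ref{the:blocks}) to the connected components of $(H - (E' \cup E'')) \setminus V'$, together with some isolated $v$-vertices and some isolated $e$-vertices. An isolated $e$-vertex in $G \setminus (E' \cup E'' \cup V')$ is precisely an edge $e \in E - (E' \cup E'')$ with $e \subseteq V'$; such a singleton component $C = \{e\}$ contributes to $q$ iff $\eps_G(V(C), T) = \eps_G(\{e\}, V') = |e|$ is odd --- this accounts for the term $q_e(V')$. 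Isolated $v$-vertices contribute $\eps_G(\{v\}, T) = \eps_G(\{v\}, E') = |\{e \in E': v \in e\}|$; I need to double-check whether these are meant to be folded into $q_H(E'', V' \cup E')$ or are automatically zero/handled --- an isolated $v$-vertex in $G \setminus (E'\cup E'' \cup V')$ has all its incident $e$-vertices in $E' \cup E''$, and since $v \notin V'$, it can still have neighbours in $E'$, so in principle it can have odd $\eps$. I would treat such a $v$ as a (trivial) connected component $C$ of $(H-(E'\cup E''))\setminus V'$ with $V(C) = \{v\}$, $E(C) = \emptyset$, for which $|F(H[V(C), E'])| + |F(H[V', E(C)])| = |F(H[\{v\}, E'])| + 0 = |\{e \in E': v \in e\}|$ --- so these are correctly counted inside $q_H(E'', V' \cup E')$. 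For a general (non-trivial) component $C$, we have $\eps_G(V(C), T) = \eps_G(V(C), V') + \eps_G(E(C), E')$ where $V(C), E(C)$ are the vertex and edge sets of $C$; the first summand is $|F(H[V(C), ?])|$ --- here I must be careful, since an edge $ve$ of $G$ between $v \in V(C)$ and $v' \in V'$... no: $\eps_G(V(C), V')$ counts $G$-edges between the $v$-vertices of $C$ and the $v$-vertices in... but $V'$ consists of $v$-vertices, and $G$ is bipartite with no edges between $v$-vertices, so this is vacuous. Rather, $T = V' \cup E'$ has $v$-part $V'$ and $e$-part $E'$, and $V(C)$ (as a subset of $V(G)$) splits into its own $v$-part (call it $V_C \subseteq V$) and $e$-part ($E_C \subseteq E$). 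Then $\eps_G(V(C), T) = \eps_G(V_C, E') + \eps_G(E_C, V') = |F(H[V_C, E'])| + |F(H[V', E_C])|$, matching the parity condition in the definition of $q_H(E'', V' \cup E')$ in~(\ref{eq:LovH2}).

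Finally, I would assemble these substitutions: Condition~(\ref{eq:LovH}) becomes, term by term,
\[
2|E''| + \Bigl(\sum_{v \in V'} \deg_H(v) + \sum_{e \in E'} |e|\Bigr) - 2|E'| - |F(H[V', E''])| - \bigl(q_H(E'', V' \cup E') + q_e(V')\bigr) \ge 0,
\]
which is exactly~(\ref{eq:LovH2}), and the quantification ``all disjoint $S \subseteq E$, $T \subseteq V \cup E$'' becomes ``every $V' \subseteq V$ and all disjoint $E', E'' \subseteq E$.'' The claim then follows from Corollary~\ref{cor:Lovasz}. I expect the main obstacle to be purely bookkeeping: correctly partitioning the connected components of $G \setminus (S \cup T)$ into the three types (non-trivial hypergraph components, isolated $v$-vertices, isolated $e$-vertices) and verifying that the parity conditions transcribe faithfully in each case --- in particular getting the definition of $H[V', E']$ and the flag-counting identity $\eps_G(E', V') = |F(H[V', E'])|$ exactly right, and making sure the isolated-$v$-vertex components are not double-counted or omitted. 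The sign/naming reconciliation between Lovász's symmetric $(g,f)$ setup and the asymmetric-looking target inequality is a secondary point that I would settle at the outset so the rest of the proof is a clean substitution. There is no genuine mathematical difficulty beyond Corollary~\ref{cor:Lovasz} itself; this corollary is a translation lemma.
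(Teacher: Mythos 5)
Your proposal is correct and follows essentially the same route as the paper: set $S=E''$, $T=V'\cup E'$ in Corollary~\ref{cor:Lovasz}, translate each term via $\deg_G(v)=\deg_H(v)$, $\deg_G(e)=|e|$, $\eps_G(E'',V')=|F(H[V',E''])|$, and split the components of $G\setminus(S\cup T)$ into incidence graphs of components of $(H-(E'\cup E''))\setminus V'$ versus isolated e-vertices (which give $q_e(V')$). Your explicit check that isolated v-vertices are absorbed as trivial hypergraph components inside $q_H(E'',V'\cup E')$ is a point the paper passes over silently, but it resolves exactly as you say.
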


\begin{proof}
Let $G$ be the incidence graph of $H$.
It suffices to show that the condition in Corollary~\ref{cor:LovaszH} holds if and only if the condition in Corollary~\ref{cor:Lovasz} holds.
Take any subset $V' \subseteq V$ and disjoint subsets $E', E''\subseteq E$, and let $S=E''$ and $T=V' \cup E'$.  Clearly,
$$2|S| + \sum_{x \in T} \deg_G(x)  - 2|T \cap E|=
2|E''|+\sum_{v \in V'} \deg_H(x) + \sum_{e \in E'} |e| - 2|E'|.$$
Next, we have
$$ \eps_G(S,T \cap V)=\eps_G(E'',V')=|F(H[V',E''])|.$$
Observe that the incidence graph of $(H-(E' \cup E''))\setminus V'$ is obtained from
$G \setminus (S \cup T)=G\setminus (E'' \cup V' \cup E')$ by deleting any isolated e-vertices; these are precisely the edges $e \in E-(E'\cup E'')$ such that $e \subseteq V'$.
Hence, by Theorem~\ref{the:conn}, the connected components of $G \setminus (S \cup T)$ are either the incidence graphs of the connected components of $(H-(E' \cup E''))\setminus V'$, or else correspond to the edges $e \in E-(E'\cup E'')$ such that $e \subseteq V'$.
Take any connected component $C_G$ of $G \setminus (S \cup T)$. If $C_G$ is the incidence graph of a connected component $C$ of $(H-(E' \cup E''))\setminus V'$, then
$$\eps_G(V(C_G),T)=\eps_G(V(C)\cup E(C),V' \cup E')=|F(H[V(C),E'])|+
|F(H[V',E(C)])|.$$
If however, $C_G$ corresponds an isolated e-vertex $e$, then
$$\eps_G(V(C_G),T)=\eps_G(\{e\},V' \cup E')=|e|.$$
Thus
$$q(S,T)=q(E'',V' \cup E')=q_H(E'',V' \cup E') + q_e(V'),$$
and Conditions~(\ref{eq:LovH}) and (\ref{eq:LovH2}) are equivalent.
\end{proof}

Using Theorem~\ref{the:blocks}, we immediately obtain the following.

\begin{cor}
A hypergraph $H$ is quasi-eulerian if and only if the necessary and sufficient condition in Corollary~\ref{cor:LovaszH} holds for every block of $H$.
\end{cor}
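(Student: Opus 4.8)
The plan is to derive the statement as a direct consequence of two facts already available in the excerpt: Theorem~\ref{the:blocks}(1), which says that $H$ has an Euler family if and only if every block of $H$ has an Euler family; and Corollary~\ref{cor:LovaszH}, which supplies the verifiable necessary and sufficient condition for an individual hypergraph to be quasi-eulerian.

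First I would apply Theorem~\ref{the:blocks}(1) to reduce quasi-eulerianity of $H$ to quasi-eulerianity of each of its blocks: $H$ is quasi-eulerian $\iff$ every block $B$ of $H$ is quasi-eulerian. Then, for each block $B$ of $H$, I would apply Corollary~\ref{cor:LovaszH} \emph{to $B$ in place of $H$}: this gives that $B$ is quasi-eulerian precisely when Condition~(\ref{eq:LovH2}), read with $V$, $E$, the degree function, the flag set, the component counts, etc.\ all interpreted inside $B$, holds for every $V' \subseteq V(B)$ and all disjoint $E', E'' \subseteq E(B)$. Conjoining these equivalences over all blocks yields exactly the assertion of the corollary.

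The remaining points are purely bookkeeping. Since all hypergraphs in this section are assumed to have no empty edges, blocks are well defined; and a block is by definition a non-separable hypergraph, hence connected and without empty edges, so Corollary~\ref{cor:LovaszH} (which, through Corollary~\ref{cor:Lovasz} and Theorem~\ref{the:ET-G}, implicitly works with the incidence graph of the hypergraph at hand) legitimately applies to it. There is essentially no obstacle here: the result is a formal corollary obtained by citing Theorem~\ref{the:blocks} and Corollary~\ref{cor:LovaszH} in sequence, so the written proof should be only a couple of sentences long.
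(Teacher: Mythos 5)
Your proposal is correct and matches the paper's own argument: the paper states this corollary with the single remark ``Using Theorem~\ref{the:blocks}, we immediately obtain the following,'' i.e.\ exactly the combination of Theorem~\ref{the:blocks}(1) with Corollary~\ref{cor:LovaszH} applied to each block that you describe. Your additional bookkeeping remark that blocks are connected and without empty edges, so that the Lov\'{a}sz-based criterion legitimately applies to them, is a sensible (if unstated in the paper) sanity check.
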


%%%%%%%%%%%%%%%%%%%%%%%%%
% 3-uniform hypergraphs %
%%%%%%%%%%%%%%%%%%%%%%%%%

\subsection{Quasi-eulerian 3-uniform hypergraphs}\label{sec:EF3}

In Theorem~\ref{the:cut-edges}, we saw that a hypergraph with strong cut edges cannot be quasi-eulerian, while the examples in Figures~\ref{fig:pic4} and \ref{fig:pic5} show that a hypergraph with cut edges, none of which is strong, may or may not be quasi-eulerian. The following theorem completes the picture for 3-uniform hypergraphs. The main ingredient in the proof  is the following result by Fleischner.

\begin{theo}\cite{Fle}\label{the:Fle}
Every graph without cut edges and of minimum degree at least 3 has a spanning even subgraph without isolated vertices.
\end{theo}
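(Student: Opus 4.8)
The plan is to first reduce to the connected case: a graph has no cut edges if and only if each of its connected components has none, and a spanning even subgraph (a spanning subgraph in which every degree is even) without isolated vertices is obtained by taking the union of such subgraphs over all components. So I assume $G$ is connected, hence $2$-edge-connected, with $\delta(G) \ge 3$.

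My main approach is an extremal/augmentation argument. Among all spanning even subgraphs of $G$ (note the edgeless subgraph qualifies), choose one, say $H$, that covers the maximum number of vertices, where a vertex is \emph{covered} if it has positive, hence at least $2$, degree in $H$. If $H$ covers every vertex we are done, so suppose some vertex $v$ is isolated in $H$; the strategy is then to produce a spanning even subgraph covering strictly more vertices, contradicting maximality. The key device is symmetric difference with a cycle: for any cycle $C$ of $G$, the subgraph $H \triangle E(C)$ is again even, since adding $C$ changes each degree by an even amount. For a vertex $w$ on $C$, its degree can only drop if both edges of $C$ at $w$ already lie in $H$; hence a covered vertex $w$ becomes isolated only if $\deg_H(w) = 2$ and both of its $H$-edges lie on $C$. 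The clean case is when $v$ lies on a cycle $C$ consisting entirely of edges \emph{not} in $H$: then $H \triangle E(C) = H \cup E(C)$ covers $v$ and uncovers nobody, the desired contradiction. Since $v$ is isolated, all $\deg_G(v) \ge 3$ edges at $v$ are free (outside $H$), so such a free cycle through $v$ exists precisely when $v$ lies on a cycle of $G - E(H)$, that is, when not every free edge at $v$ is a bridge of $G - E(H)$.

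The main obstacle --- and the reason the hypotheses $\delta(G) \ge 3$ \emph{and} bridgelessness are both needed --- is the remaining case, in which every free edge at $v$ is a bridge of $G - E(H)$ (deleting the covered edges can create bridges even though $G$ itself has none). Here every cycle through $v$ must traverse some edges of $H$, so taking a symmetric difference risks uncovering the degree-$2$ covered vertices it passes through. The plan to handle this is to route a cycle $C$ through $v$ that re-uses $H$-edges only in a controlled way: from each free edge $vx$ (a bridge of $G - E(H)$) one follows a $v$--$x$ path that must exist in $G - vx$ by bridgelessness of $G$, and then uses the surplus afforded by $\delta \ge 3$ together with an alternating correction to repair any vertex that the symmetric difference would otherwise uncover. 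Making this correction precise is the crux; the cleanest way to close the case is to refine the extremal choice of $H$ by a secondary minimization (for example, minimizing total edge count among maximum-coverage even subgraphs) so that an uncovering cycle cannot exist, forcing the clean case to apply.

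As an alternative route, one can reduce to the cubic case: inflate each vertex of degree $d$ into a small gadget so that the resulting graph is cubic and still bridgeless, apply Petersen's theorem to extract a perfect matching $M$, and take the complementary $2$-factor $G' \setminus M$; contracting the gadgets back then yields a spanning subgraph of $G$ whose degree at each original vertex is automatically even. The obstacle in this route is designing the gadget so that \emph{every} original vertex receives positive degree under contraction --- that is, so that no gadget is entirely avoided by the $2$-factor --- which is again exactly the point at which $\delta \ge 3$ and bridgelessness must be invested.
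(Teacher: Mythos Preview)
The paper does not prove this theorem at all: it is quoted from Fleischner \cite{Fle} and used as a black box in the proof of Theorem~\ref{the:3-uniform}. So there is no ``paper's own proof'' to compare against; the relevant question is simply whether your argument stands on its own.

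It does not. Your extremal set-up is natural, and the observation that $H \triangle E(C)$ is even and can only uncover a vertex $w$ with $\deg_H(w)=2$ whose two $H$-edges lie on $C$ is correct and useful. The clean case is fine. But the hard case --- every edge at the isolated vertex $v$ is a bridge of $G - E(H)$ --- is exactly where the content of the theorem lives, and you do not actually handle it. You assert that ``the cleanest way to close the case is to refine the extremal choice of $H$ by a secondary minimization \ldots\ so that an uncovering cycle cannot exist, forcing the clean case to apply,'' but you give no argument for this implication, and it is not obvious. Minimizing $|E(H)|$ among maximum-coverage even subgraphs does not, by itself, prevent all $\ge 3$ edges at $v$ from being bridges of $G-E(H)$; you would need to exhibit a concrete cycle through $v$ whose symmetric difference with $H$ uncovers no vertex (or uncovers fewer than it covers), and that construction is precisely what is missing. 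Your alternative Petersen route has the same status: you correctly identify the obstacle (ensuring no gadget is avoided by the $2$-factor) and then stop.

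For context, Fleischner's own proof proceeds via his splitting lemma (splitting off pairs of edges at a vertex while preserving $2$-edge-connectivity), which sidesteps the bridge-creation difficulty you ran into. If you want to complete your symmetric-difference approach, the missing ingredient is a structural lemma guaranteeing, at an isolated $v$, a cycle through $v$ that meets $H$ only in ``safe'' edges; obtaining such a lemma is nontrivial and is essentially equivalent in difficulty to the theorem itself.
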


\begin{theo}\label{the:3-uniform}
Let $H=(V,E)$ be a 3-uniform hypergraph without cut edges. Then $H$ is quasi-eulerian.
\end{theo}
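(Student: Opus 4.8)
The plan is to use the incidence-graph characterization of quasi-eulerian hypergraphs (Theorem~\ref{the:ET-G}): it suffices to show that the incidence graph $G=\G(H)$ has a subgraph that is even on the v-vertices and $2$-regular on the e-vertices. Since each e-vertex of $G$ has degree exactly $3$ (because $H$ is $3$-uniform), making it $2$-regular on $E$ amounts to deleting, for each edge $e$, exactly one of the three flags at $e$. So the real task is to delete a set $F'$ of flags — one per e-vertex — so that the resulting graph $G'$ has all v-vertices of even degree and all e-vertices of degree $2$, \emph{and} no e-vertex sits in a trivial component (an e-vertex of degree $2$ is automatically in a non-trivial component, so that part is free). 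Equivalently, I want a spanning subgraph of $G$ in which every e-vertex has degree $2$ and every v-vertex has even degree; this is exactly an $f$-factor-type / parity condition, and I can attack it using Fleischner's Theorem~\ref{the:Fle}.

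The key reduction is as follows. First, by Theorem~\ref{the:blocks} it is enough to prove the result for each block of $H$, so I may assume $H$ is non-separable; combined with ``no cut edges'' this should let me conclude that $G$ has no cut edges either (a cut edge of $G$ incident to an e-vertex would be a cut edge of $H$ by \cite[Theorem 3.23]{BahSaj}, and a cut edge incident only to a v-vertex would force a pendant e-vertex, i.e.\ an edge of cardinality~$1$, which can be handled or excluded separately — and in fact I should first reduce to hypergraphs with no edges of cardinality $<2$, using the observation preceding the theorem that pendant vertices may be deleted). Then I want to pass to a graph of minimum degree $\ge 3$: e-vertices already have degree $3$, but v-vertices of degree $1$ or $2$ are a problem. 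Degree-$1$ v-vertices are pendant and were removed; degree-$2$ v-vertices can be \emph{suppressed} — replace the two incident flags/edges of $G$ at such a v-vertex by a single edge joining the two e-vertices — producing a (multi)graph $G''$ in which all vertices have degree $\ge 3$ and which still has no cut edges (suppression preserves $2$-edge-connectivity). Apply Fleischner's theorem to $G''$ to get a spanning even subgraph $S$ with no isolated vertices; then un-suppress. In $S$, every vertex has positive even degree, so every e-vertex has degree exactly $2$ (its degree in $G''$ is $3$, and $2$ is the only positive even number $\le 3$) — which is precisely what is needed. Translating $S$ back through the suppressions gives the desired even, $2$-regular-on-$E$ subgraph of $G$, and Theorem~\ref{the:ET-G} finishes it.

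The step I expect to be the main obstacle is the bookkeeping around the non-$3$-regular v-vertices and around very small blocks / degenerate cases. Specifically: (i) ensuring that after deleting pendant vertices and suppressing degree-$2$ v-vertices the resulting graph really is bridgeless and has minimum degree $\ge 3$ — a v-vertex of degree $2$ whose two neighbours are the \emph{same} e-vertex, or two parallel edges, creates multi-edges or small components that need care, and a block might be a single edge (which is not quasi-eulerian and must be excluded as the base case, though a block with at least two edges and no cut edges automatically has enough structure); (ii) an e-vertex of degree $2$ in $G''$ rather than $3$ (arising if some pair of vertices of $e$ got suppressed in a way that collapses things) — I should check that Fleischner still gives degree exactly $2$ there, which it does since $2$ is the only positive even number $\le 2$; (iii) confirming ``no isolated vertices'' in Fleischner's output is enough and that I do not actually need ``no trivial components'' — an even subgraph with no isolated vertex has every component a non-trivial even graph, hence decomposable into cycles, which is more than enough. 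Once these degenerate configurations are dispatched, the core argument is just: $3$-uniform $+$ bridgeless $\Rightarrow$ min degree $\ge 3$ after cleanup $\Rightarrow$ Fleischner $\Rightarrow$ even subgraph with e-vertices of degree exactly $2$ $\Rightarrow$ Euler family.
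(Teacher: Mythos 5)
Your proposal is correct and follows essentially the same route as the paper's proof: show the incidence graph is bridgeless because $H$ has no cut edges (which also rules out degree-$1$ vertices), suppress the degree-$2$ v-vertices to reach minimum degree $3$, apply Fleischner's theorem, and observe that the resulting even spanning subgraph must be $2$-regular on the e-vertices, so Theorem~\ref{the:ET-G} applies. The degenerate cases you flag (parallel edges after suppression, blocks, edges of small cardinality) are handled in the paper simply by noting that $3$-uniformity and bridgelessness already preclude them, so no separate block reduction is needed.
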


\begin{proof}
Let $G=\G(H)$ be the incidence graph of $H$. We claim that, since $H$ has no cut edges, the graph $G$ has no cut edges. Suppose, to the contrary, that $ve$ is a cut edge of $G$ (where $v\in V$ and $e \in E$), and let $G_v$ and $G_e$ be the connected components of $G-ve$ containing vertex $v$ and $e$, respectively. Since $|e|>1$, the component $G_e$ must contain a v-vertex $w$, and $v$ and $w$ are disconnected in $G-ve$. Hence they are disconnected in $H-e$, showing that $e$ is a cut edge of $H$, a contradiction. Therefore $G$ has no cut edges as claimed.

Note that we may assume that $H$, and hence $G$,  has no isolated vertices. Clearly, $G$ has no vertices of degree 1, since the edge incident with such a vertex would necessarily be a cut edge. Suppose $G$ has a vertex of degree 2. Then it must be a v-vertex, since $|e|=3$ for all $e \in E$. Obtain a graph $G^*$ from $G$ by replacing, for every vertex $v$ of degree 2, the 2-path $e_1ve_2$ in $G$ with an edge $e_1e_2$. Observe that in $G^*$, all e-vertices have degree 3, and all v-vertices have degree at least 3. Moreover, $G^*$ has no cut edges since $G$ does not. Therefore, by Theorem~\ref{the:Fle}, $G^*$ has a spanning even subgraph $G_1^*$ without isolated vertices. We construct a subgraph $G_1$ of $G$ as follows: for any vertex $v$ of degree 2 in $G$, and its incident edges $e_1$ and $e_2$, if $e_1e_2$ is an edge of $G_1^*$, then replace it with the 2-path $e_1ve_2$. The resulting graph $G_1$ is an even subgraph of $G$ without isolated e-vertices. Since every e-vertex of $G$ has degree 3 in $G$, it has degree 2 in $G_1$. Thus, by Theorem~\ref{the:ET-G}, $G_1$ gives rise to an Euler family of $H$.
\end{proof}

The reader may have noticed that an Euler family of $H$ constructed in the proof of Theorem~\ref{the:3-uniform} traverses every vertex of $H$ except possible some of the vertices of degree 2. Observe that Theorem~\ref{the:3-uniform} does not hold for graphs (that is, 2-uniform hypergraphs); an example is a cycle with a chord. More generally, it does not hold for all hypergraphs in which every edge has size 2 or 3; such an example is given in Figure~\ref{fig:pic6}.

\begin{center}
\begin{figure}[h!]
\centerline{\includegraphics[scale=0.5]{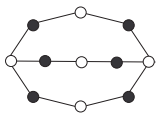}}
\caption{The incidence graph of a hypergraph without cut edges that is not quasi-eulerian; observe that every edge has size 2 or 3.  (Black dots represent the v-vertices.)} \label{fig:pic6}
\end{figure}
\end{center}

\begin{cor}
Let $H=(V,E)$ be a 3-uniform hypergraph with at least two edges such that each pair of vertices lie together in at least one edge. Then $H$ is quasi-eulerian.
\end{cor}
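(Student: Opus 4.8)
The plan is to reduce the statement directly to Theorem~\ref{the:3-uniform}: I will show that a $3$-uniform hypergraph $H=(V,E)$ in which every pair of vertices lies together in some edge has no cut edges, and then quasi-eulerianity is immediate from that theorem.

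First I would record two easy consequences of the hypotheses. Since $H$ is $3$-uniform and has at least two edges, $|V|\ge 3$; and since every two vertices share an edge, $H$ is connected, so $\cc(H)=1$. Thus an edge $e$ is a cut edge precisely when $H-e$ is disconnected.

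The heart of the argument is to assume that some edge $e=\{a,b,c\}$ is a cut edge and derive a contradiction by showing that $H-e$ is in fact connected. The crucial observation is that if two vertices $u,v$ lie in distinct connected components of $H-e$, then the edge $f$ guaranteed by the hypothesis to contain both $u$ and $v$ must be $e$ itself (otherwise $f$ survives in $H-e$ and joins $u$ to $v$); in particular $\{u,v\}\subseteq e$. Using this, I would split into two cases. If $|V|=3$, then every edge equals $\{a,b,c\}$, and since there is a second edge, $H-e$ is still connected. If $|V|\ge 4$, then for any $w\in V\setminus e$ and each $x\in\{a,b,c\}$ there is an edge through $w$ and $x$ distinct from $e$ (as $w\notin e$), so $w$ lies in the same component of $H-e$ as $a$, $b$, and $c$; likewise any two vertices of $V\setminus e$ share an edge different from $e$. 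Hence all of $V$ forms a single component of $H-e$, contradicting that $e$ is a cut edge. Therefore $H$ has no cut edges, and Theorem~\ref{the:3-uniform} finishes the proof.

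I do not anticipate a serious obstacle; the only delicate points are bookkeeping ones --- treating the small case $|V|=3$ separately, and making sure that no vertex of $V\setminus e$ is left isolated in $H-e$, which the argument above handles by exhibiting, for each such vertex, an edge through it different from $e$.
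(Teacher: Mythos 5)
Your proof is correct and takes essentially the same route as the paper's: both reduce to Theorem~\ref{the:3-uniform} by showing the pair-covering hypothesis forbids cut edges, treating $|V|=3$ separately and, for $|V|\ge 4$, using a vertex $w\notin e$ and the edges joining $w$ to the vertices of $e$ to see that deleting $e$ cannot disconnect $H$. The only cosmetic difference is that you argue directly that $H-e$ remains connected, while the paper derives the contradiction from two vertices of $e$ lying in distinct components of $H-e$.
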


\begin{proof}
By Theorem~\ref{the:3-uniform}, it suffices to show that $H$ has no cut edges. If $|V|=3$, then clearly none of the edges are cut edges. Hence assume $|V|\ge 4$, and suppose that $e \in E$ is a cut edge of $H$. Let $u_1, u_2 \in V$ be vertices of $e$ that lie in distinct connected components of $H-e$, and consider any vertex $w \not\in e$. Then there exist edges $e_1,e_2$ such that $w,u_i \in e_i$ for $i=1,2$. Since obviously $e_1,e_2 \ne e$, vertex $w$ must lie in the same connected component of $H-e$ as both $u_1$ and $u_2$, contradicting the fact that $u_1$ and $u_2$ lie in distinct connected components of $H-e$. Hence $H$ has no cut edges, and by Theorem~\ref{the:3-uniform} it is quasi-eulerian.
\end{proof}

Recall that a triple system TS($n$,$\lambda$) is a 3-uniform hypegraph of order $n$ such that every pair of vertices lie together in exactly $\lambda$ edges.

\begin{cor}
Every triple system TS($n$,$\lambda$) with $(n,\lambda) \ne (3,1)$ is quasi-eulerian.
\end{cor}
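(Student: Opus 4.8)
The plan is to deduce this at once from the preceding corollary, which guarantees that every $3$-uniform hypergraph with at least two edges, in which every pair of vertices lies in at least one edge, is quasi-eulerian. So let $H=\mathrm{TS}(n,\lambda)$ with $(n,\lambda)\ne(3,1)$. Two hypotheses of that corollary need to be checked for $H$: that every pair of vertices lies in at least one edge, and that $H$ has at least two edges. The first is immediate, since by definition every pair of vertices of a $\mathrm{TS}(n,\lambda)$ lies in exactly $\lambda\ge 1$ edges.

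For the second, I would count incidences between edges and pairs of vertices. Each edge of $H$, being a $3$-set, contains exactly $\binom{3}{2}=3$ pairs, while each of the $\binom{n}{2}$ pairs of vertices lies in exactly $\lambda$ edges; double counting gives $3|E|=\lambda\binom{n}{2}$, so $|E|=\lambda\binom{n}{2}/3$. Since $H$ is $3$-uniform with a nonempty edge set, $n\ge 3$. If $n\ge 4$, then $\binom{n}{2}\ge 6$ and hence $|E|\ge 2\lambda\ge 2$; if $n=3$, then $\binom{n}{2}=3$, so $|E|=\lambda$, which is at least $2$ precisely when $\lambda\ge 2$. Thus the assumption $(n,\lambda)\ne(3,1)$ forces $|E|\ge 2$, and the preceding corollary applies to give that $H$ is quasi-eulerian.

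I do not expect any genuine obstacle: the whole argument is this short reduction, and the only point requiring care is recognising why the case $(n,\lambda)=(3,1)$ must be excluded. In that case $H$ consists of the single edge $\{v_1,v_2,v_3\}$, and a hypergraph with a single edge is not quasi-eulerian --- either by the definition of a closed walk, or by the remark following Lemma~\ref{lem:NecCond} --- which is exactly why the statement carves it out.
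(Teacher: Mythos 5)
Your proposal is correct and matches the paper's (implicit) argument: the corollary is stated as an immediate consequence of the preceding corollary on 3-uniform hypergraphs in which every pair of vertices lies in an edge, and your verification of the two hypotheses --- including the counting $3|E|=\lambda\binom{n}{2}$ to rule out the single-edge case $(n,\lambda)=(3,1)$ --- is exactly the intended reduction.
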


We mention that Wagner and the second author recently proved that all triple systems TS($n$,$\lambda$), except for TS(3,1), are in fact eulerian \cite{SajWag}.

The proof of \cite[Theorem 2]{LonNar} for the case $k=3$ is very long and technical; as another corollary of our Theorem~\ref{the:3-uniform}, we show that every 3-uniform hypergraph with a connected 2-intersection graph is eulerian provided that it has no pendant vertices.

\begin{cor} Every 3-uniform hypergraph with a connected 2-intersection graph and without pendant vertices is eulerian.
\end{cor}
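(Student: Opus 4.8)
The plan is to derive this from Theorem~\ref{the:3-uniform} together with the connectivity of $\L_2(H)$ (the $2$-intersection graph). The first step is to verify that $H$ has no cut edges. Suppose $e=\{a,b,c\}$ were one. Since $H$ has no pendant vertices, every vertex of $e$ has degree at least $2$ and hence is non-isolated in $H-e$, so $H-e$ has at least two connected components; since $H$ is connected, each of these components contains an edge and at least one vertex of $e$. As $|e|=3$, some such component $C$ contains exactly one vertex of $e$, so every edge of $C$ meets $e$ in at most one vertex, and therefore no edge of $C$ is adjacent in $\L_2(H)$ to $e$ or to any edge of another component. Thus the non-empty edge set of $C$ is a union of connected components of $\L_2(H)$ not containing $e$, contradicting the connectivity of $\L_2(H)$. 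Hence $H$ has no cut edges, so by Theorem~\ref{the:3-uniform} it is quasi-eulerian; fix a family $\mathcal F$ of pairwise edge-disjoint closed strict trails of $H$ that jointly traverse every edge of $H$ exactly once.

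The second step is to amalgamate the members of $\mathcal F$ into a single closed strict trail. Suppose $|\mathcal F|\ge 2$. Since the edge sets of the trails in $\mathcal F$ partition $E$ and $\L_2(H)$ is connected, there are distinct $T,T'\in\mathcal F$ and edges $f$ of $T$ and $f'$ of $T'$ with $|f\cap f'|=2$. Let $P\subseteq f$ and $P'\subseteq f'$ be the two-element sets of vertices via which $T$ traverses $f$ and $T'$ traverses $f'$; since $f$ and $f'$ each contain only one vertex outside $I:=f\cap f'$, both $P$ and $P'$ meet $I$. If some vertex lies in $P\cap P'\cap I$, it is an anchor of both $T$ and $T'$, and we concatenate $T$ and $T'$ there. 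Otherwise $f=\{v,v',\gamma\}$ and $f'=\{v,v',\gamma'\}$ with $I=\{v,v'\}$, $P=\{v,\gamma\}$ and $P'=\{v',\gamma'\}$; deleting from $T$ its single step through $f$ leaves a $(v,\gamma)$-strict trail $W$ on the remaining edges of $T$, and similarly $T'$ leaves a $(v',\gamma')$-strict trail $W'$, so that
$$v\ W\ \gamma\ f\ v'\ W'\ \gamma'\ f'\ v$$
is a closed strict trail of $H$ that traverses exactly the edges of $T$ and of $T'$; this is legitimate because $\gamma,v'\in f$ and $\gamma',v\in f'$, the four vertices $v,v',\gamma,\gamma'$ are pairwise distinct, and $T$ and $T'$ have disjoint edge sets. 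In either case, replacing $T$ and $T'$ in $\mathcal F$ by the amalgamated trail decreases $|\mathcal F|$ by one; iterating produces a single closed strict trail through every edge of $H$, namely an Euler tour.

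I expect the second case of the amalgamation step to be the main obstacle: $f$ and $f'$ may be traversed by $T$ and $T'$ so that the two trails share no anchor, and a naive concatenation fails. The point is that $3$-uniformity supplies each of $f$ and $f'$ with a ``spare'' third vertex, so after contracting $T$ and $T'$ to their path-like residues $W$ and $W'$ one obtains, on $\{v,v',\gamma,\gamma'\}$, a four-cycle $W,f,W',f'$ along which the desired closed walk runs. The degenerate possibilities --- a trail of length $2$, or $f$ and $f'$ parallel (the latter in fact excluded by $|f\cap f'|=2$) --- need no separate treatment; and since, by the remark following the definition of an Euler family, any family of closed strict trails covering every edge of $H$ exactly once can be turned into an Euler family, it does not matter whether the intermediate families are anchor-disjoint.
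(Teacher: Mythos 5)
Your proof is correct and follows essentially the same route as the paper's: connectivity of the $2$-intersection graph plus the absence of pendant vertices rules out cut edges, Theorem~\ref{the:3-uniform} yields an Euler family, and trails are merged across a pair of $2$-intersecting edges lying in different members of the family. Your explicit rerouting $v\,W\,\gamma\,f\,v'\,W'\,\gamma'\,f'\,v$ is precisely the paper's swap of the incidence-graph edges $vf$ and $v'f'$ for $v'f$ and $vf'$, just carried out directly in the hypergraph rather than in $\G(H)$.
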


\begin{proof}
Let $H=(V,E)$ be a 3-uniform hypergraph with a connected 2-intersection graph $L$ and without pendant vertices. Hence $H$ has at least 2 edges. Suppose it has a cut edge $e$. Since $L$ is connected, $e$ shares exactly two of its vertices with another edge; consequently, these two vertices lie in the same connected component of $H-e$. Thus $H-e$ has exactly two connected components;  let $H_1$ be the connected component  containing a single vertex, $w$, of $e$. By assumption, $w$ is not a pendant vertex in $H$, so $E(H_1) \ne \emptyset$. Take any $e_1 \in E(H_1)$ and any $e_2 \in E-E(H_1)$. Then $e_1 \cap e_2 \subseteq \{ w \}$, whence $e_1e_2 \not\in E(L)$. It follows that $L$ is disconnected, a contradiction.

We conclude that $H$ has no cut edges, and hence is  quasi-eulerian by Theorem~\ref{the:3-uniform}.
Let ${\cal F}=\{T_1,\ldots,T_k\}$ be an Euler family of $H$ with a minimum number of components, and suppose $k \ge 2$. Let $G$ be the incidence graph of $H$, let $G'$ be the subgraph of $G$ corresponding to ${\cal F}$, and $G_1,\ldots,G_k$ the connected components of $G'$ corresponding to the closed strict trails $T_1,\ldots,T_k$ of $H$. Since $L$ is connected, without loss of generality, there exist e-vertices $e_1$ of $G_1$ and $e_2$ of $G_2$ that are adjacent in $L$, and hence in $G$ have two common neighbours, say $v_1$ and $v_2$. Since $e_1$ and $e_2$ are of degree 3 in $G$, and of degree 2 in $G'$, each is adjacent to at least one of $v_1$ and $v_2$ in $G'$, and since they lie in distinct connected components of $G'$, we may assume without loss of generality that $v_1e_1, v_2e_2 \in E(G')$. Obtain $G''$ by replacing these two edges of $G'$ with edges $v_1e_2$ and $v_2e_2$. Then $G''$  an even subgraph of $G$ that is 2-regular on $E$, so it corresponds to an Euler family of $H$. But since $G''$ one fewer connected component than $G'$, it contradicts the minimality of ${\cal F}$.

We conclude that $k=1$, that is, $H$ admits an Euler tour.
\end{proof}

We conclude this section with an alternative, more detailed characterization of quasi-eulerian 3-uniform hypergraphs that are either even or odd, in terms of their incidence graph.

\begin{theo}
Let $H=(V,E)$ be an even 3-uniform hypergraph and $G$ its incidence graph. The following are equivalent:
\begin{enumerate}
\item $H$ is quasi-eulerian.
\item $G$ has a  subgraph $G''$ that is even on $V$ and 1-regular on $E$.
\item $E$ can be partitioned into pairs $\{ e, e' \}$ such that $e \cap e' \ne \emptyset$.
\end{enumerate}
\end{theo}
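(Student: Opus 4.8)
The plan is to prove the three statements equivalent by establishing $(1) \Leftrightarrow (2)$ and $(2) \Leftrightarrow (3)$. The key structural fact driving everything is that $H$ is even and $3$-uniform, so in the incidence graph $G$ every e-vertex has degree exactly $3$ (odd) and every v-vertex has even degree.

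First I would handle $(2) \Leftrightarrow (3)$, which I expect to be the more transparent direction. Suppose $G$ has a subgraph $G''$ even on $V$ and $1$-regular on $E$. Then $G''$ is a graph in which every v-vertex has even degree and every e-vertex has degree $1$; since e-vertices form one side of the bipartition, the edges of $G''$ form a collection of components each of which (after suppressing degree-$2$ v-vertices) is a tree whose leaves are e-vertices, and an easy parity/counting argument (or: a forest in which every leaf is an e-vertex and every internal v-vertex has even degree) forces each component to contain exactly two e-vertices joined through a common v-vertex. Hence $G''$ pairs up the edges of $E$ into pairs $\{e,e'\}$ with $e \cap e' \neq \emptyset$, giving $(3)$. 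Conversely, given such a partition of $E$ into pairs $\{e,e'\}$ with a common vertex $v_{ee'}$, the paths $e\, v_{ee'}\, e'$ in $G$ union to a subgraph $G''$ that is $1$-regular on $E$ and has every v-vertex of degree $0$ or $2$, hence even on $V$. This gives $(2)$.

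Next, $(1) \Leftrightarrow (2)$. The direction $(2) \Rightarrow (1)$: if $G''$ is even on $V$ and $1$-regular on $E$, take two disjoint copies of $G''$ (or rather, note the issue — we need $2$-regular on $E$, not $1$-regular). The clean move is: each pair $\{e,e'\}$ with common vertex $v$ shares, by $3$-uniformity and $|e|=|e'|=3$, at least one common vertex; if they share two vertices $v,w$, then $v\,e\,w\,e'\,v$ is a closed strict trail; if they share exactly one vertex $v$, then $|e \cap e'|=1$ and we cannot form a closed trail on $\{e,e'\}$ alone — so here I would instead argue directly from $(3)$ using the evenness more cleverly, or better, observe that $(3)$ together with Theorem~\ref{the:3-uniform}-style rerouting lets us absorb such "bad" pairs. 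Actually the cleanest route: from $G''$ (even on $V$, $1$-regular on $E$) build $G' = G''$ with each component traversed — hmm. Let me instead take $G'$ to be the subgraph of $G$ obtained by doubling each path $e\,v\,e'$: since each e-vertex gets degree $2$ and each v-vertex gets even degree, $G'$ is even on $V$ and $2$-regular on $E$, and by Theorem~\ref{the:ET-G} this is exactly an Euler family. This gives $(1)$. For $(1) \Rightarrow (2)$: if $H$ is quasi-eulerian, Theorem~\ref{the:ET-G} gives an even subgraph $G'$ of $G$ that is $2$-regular on $E$; then every e-vertex has degree $2$ in $G'$ and degree $3$ in $G$, so $G - E(G')$ restricted to the edges at e-vertices is $1$-regular on $E$; more precisely, for each $e \in E$ there is exactly one edge of $G$ at $e$ not used by $G'$. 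Let $G''$ be the spanning subgraph of $G$ consisting of exactly these leftover edges. Then $G''$ is $1$-regular on $E$, and for each v-vertex $v$, $\deg_{G''}(v) = \deg_G(v) - \deg_{G'}(v)$, which is even minus even, hence even. So $G''$ is even on $V$ and $1$-regular on $E$, giving $(2)$.

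The main obstacle I anticipate is getting the $1$-regular-to-$2$-regular passage in $(2) \Rightarrow (1)$ right: naive "doubling" of edges in a subgraph of a simple graph is not literally a subgraph, so the honest argument must go through the pairing $(3)$ and produce, for each pair $\{e,e'\}$, an actual closed strict trail in $H$ (possible when $|e\cap e'|\ge 2$) — and handle the pairs with $|e\cap e'|=1$, which do not immediately yield closed trails. I would resolve this either by noting that $(3) \Rightarrow (1)$ can be proved by instead concatenating the trail fragments $e\,v\,e'$ end-to-end around the components of $G''$ and invoking evenness of v-vertices to close them up (so that short "length-$1$" segments get merged into longer genuine closed strict trails, exactly as in the concatenation arguments of Theorem~\ref{the:blocks}), or by appealing directly to Theorem~\ref{the:ET-G} after checking that the $2$-regular-on-$E$ even subgraph one really wants exists. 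I expect the write-up to lean on the incidence-graph characterization throughout, so that all three conditions become statements about subgraphs of $G$ with prescribed degree parities, at which point the equivalences are short parity computations.
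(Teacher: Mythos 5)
Your directions $(1)\Rightarrow(2)$ and $(3)\Rightarrow(2)$ are essentially the paper's argument, but $(2)\Rightarrow(1)$ has a genuine gap, and it is precisely the place where the hypotheses ``even'' and ``3-uniform'' must both be used. The ``doubling'' of each path $e\,v\,e'$ does not produce a subgraph of the simple graph $G$, as you yourself note; and neither of your proposed repairs works. A pair $\{e,e'\}$ with $|e\cap e'|=1$ genuinely yields no closed strict trail on its own, and the components of $G''$ are stars centred at v-vertices (every e-vertex has degree $1$ in $G''$), so ``concatenating the fragments around the components and closing them up by evenness'' is not available: to close $u\,e_1\,v\,e_2\,u$ you need a second common vertex of $e_1$ and $e_2$, and to chain $e_1\,v\,e_2$ to $e_3\,v\,e_4$ you would need $e_2\cap e_3\ne\emptyset$, which nothing guarantees. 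The paper's resolution is the same complementation you already used for $(1)\Rightarrow(2)$, run in reverse: set $G'=(V(G),\,E(G)\setminus E(G''))$. Since $H$ is even, $G$ is even on $V$, so $G'$ is even on $V$; since $H$ is 3-uniform, every e-vertex has degree $3$ in $G$ and degree $1$ in $G''$, hence degree $2$ in $G'$. Theorem~\ref{the:ET-G} applied to $G'$ then gives the Euler family. Note that the resulting closed trails traverse each edge $e$ via the two flags \emph{not} selected by the pairing, which is why trying to build them directly from the pairs was doomed.

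A secondary slip: in $(2)\Rightarrow(3)$ the components of $G''$ need not contain exactly two e-vertices (a v-vertex of degree $4$ in $G''$ gives a star with four e-vertex leaves). The correct statement is simply that for each $v\in V$ the set of e-vertices joined to $v$ in $G''$ has even cardinality, so it can be split into pairs $\{e,e'\}$ with $v\in e\cap e'$; since each e-vertex has degree $1$ in $G''$, this pairs every edge of $H$ exactly once. This is a minor, fixable point; the substantive issue is the missing complementation step above.
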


\begin{proof}
(1) $\Leftrightarrow$ (2): Assume $H$ is quasi-eularian. By Theorem~\ref{the:ET-G}, $G$ has an even subgraph $G'$ that is 2-regular on $E$. Define $G''=(V(G),E(G)-E(G'))$. Since $G$ and $G'$ are both even on $V$, so is $G''$, and since $G$ is 3-regular  and $G'$ is 2-regular on $E$, $G''$ is 1-regular on $E$. The converse is proved very similarly.

(2) $\Rightarrow$ (3): Assume $G$ has a  subgraph $G''$ that is even on $V$ and 1-regular on $E$. For each $v \in V$, the set of all $e \in E$ such that $ve \in E(G'')$ is of even cardinality, and hence can be partitioned into pairs $\{ e, e' \}$ such that $v \in e \cap e'$.

(3) $\Rightarrow$ (2): Let  $\cal P$ be such a partition of $E$.  For each pair $\{ e, e' \} \in {\cal P}$,  choose  $v \in e \cap e'$, and let $G''$ be induced by the set of all edges of the form $ve$ and $ve'$. Then $G''$ is even on $V$ and 1-regular on $E$ as required.
\end{proof}

The analogous result for odd hypergraphs (below), is proved similarly.

\begin{theo}\label{the:odd3hgs}
Let $H=(V,E)$ be an odd 3-uniform hypergraph and $G$ its incidence graph. The following are equivalent:
\begin{enumerate}
\item $H$ is quasi-eulerian.
\item $G$ has an odd subgraph $G''$ that is  1-regular on $E$.
\item $E$ can be partitioned into sets $S$ of odd cardinality such that $ \bigcap_{e \in S} e \ne \emptyset$.
\end{enumerate}
\end{theo}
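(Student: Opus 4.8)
The plan is to mimic the proof of the analogous theorem for even 3-uniform hypergraphs, adjusting parity conditions throughout. The equivalence $(1)\Leftrightarrow(2)$ will again come from Theorem~\ref{the:ET-G}: if $H$ is quasi-eulerian, then $G$ has an even subgraph $G'$ that is 2-regular on $E$, and we set $G''=(V(G),E(G)-E(G'))$. Now on a v-vertex $v$, $\deg_G(v)$ and $\deg_{G'}(v)$ are both even, so $\deg_{G''}(v)$ is even; on an e-vertex $e$, we have $\deg_G(e)=|e|$ which is odd (since $H$ is odd), and $\deg_{G'}(e)=2$, so $\deg_{G''}(e)=|e|-2$ is odd, which in the $3$-uniform case equals $1$. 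Hence $G''$ is odd (on $V$, and trivially odd on $E$ since degree $1$ is odd there too, so $G''$ is odd overall) and 1-regular on $E$. Conversely, given such a $G''$, set $G'=(V(G),E(G)-E(G''))$: on v-vertices $\deg_{G'}=\deg_G-\deg_{G''}$ is even minus odd\,$=$\,odd\dots wait, that is wrong --- so the correct complement here is not $G$ but rather one must note $\deg_G(v)$ is even only if $H$ is even. \emph{This is the subtle point}: here $H$ is odd, so $\deg_G(v)$ on a v-vertex need not have controlled parity, but $G''$ being odd on $V$ means we need $G'=G-E(G'')$ to be even on $V$, which requires $\deg_G(v)$ odd for all $v$, i.e.\ $H$ odd --- consistent. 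And on e-vertices $\deg_{G'}(e)=3-1=2$. So $G'$ is even on $V$ and 2-regular on $E$, and Theorem~\ref{the:ET-G} gives an Euler family.

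For $(2)\Rightarrow(3)$: suppose $G$ has an odd subgraph $G''$ that is 1-regular on $E$. Then the edges of $G''$, viewed as flags $(v,e)$, assign to each $e\in E$ exactly one vertex $\phi(e)\in e$; this partitions $E$ according to the fibres $\phi^{-1}(v)$ for $v\in V$. For each $v$, the set $S_v=\phi^{-1}(v)=\{e\in E: ve\in E(G'')\}$ has cardinality $\deg_{G''}(v)$, which is odd since $G''$ is odd on $V$, and by construction $v\in\bigcap_{e\in S_v}e$, so $\bigcap_{e\in S_v}e\neq\emptyset$. Discard the empty fibres; the nonempty ones give the desired partition of $E$ into odd-cardinality sets $S$ with $\bigcap_{e\in S}e\neq\emptyset$.

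For $(3)\Rightarrow(2)$: given such a partition $\cal P$ of $E$, for each $S\in\cal P$ choose a vertex $v_S\in\bigcap_{e\in S}e$, and let $G''$ be the subgraph of $G$ induced (as a set of flags/edges) by $\{v_Se: S\in\cal P, e\in S\}$. Each $e\in E$ lies in exactly one $S$, hence is joined to exactly one vertex in $G''$, so $G''$ is 1-regular on $E$. For a v-vertex $v$, the neighbours of $v$ in $G''$ are exactly the edges of those $S\in\cal P$ with $v_S=v$, so $\deg_{G''}(v)=\sum_{S:\,v_S=v}|S|$, a sum of odd numbers; this is odd unless it is a sum of an even number of odd terms --- \textbf{this is the one genuine gap to patch}. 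In the even case this subtlety did not arise because there ``odd cardinality'' became ``$\ge 1$ and we grouped into pairs,'' but here we must ensure the $v_S$ can be chosen (or the sets re-partitioned) so that each vertex is the chosen representative of an odd number of blocks. I expect this to be handled exactly as in Lemma~\ref{lem:line2}'s Step-3 fix: if some $v$ is the representative of an even number of blocks, reassign representatives along an alternating structure. More cleanly, one shows $G''$ may be taken to be 1-regular on $E$ and odd on $V$ directly: if $\deg_{G''}(v)$ is even and positive, $v$ has a neighbour $e$ in $G''$ with $|e\cap e'|$\dots --- in fact, the cleanest route is to observe that a minimal counterexample to oddness admits a reassignment, and since $3$-uniformity forces $|e|=3$, any edge $e$ with flag $v_Se$ has another vertex available. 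I would simply state: choose the representatives $v_S$ greedily, and whenever two blocks $S_1,S_2$ share their chosen representative $v$ with a common vertex $w\ne v$ in some $e\in S_1\cup S_2$, swap; iterating, every vertex ends up representing an odd number of blocks. The main obstacle is exactly this parity-repair argument in $(3)\Rightarrow(2)$; the rest is a routine transcription of the even case with ``2-regular on $E$ / even on $V$'' and ``1-regular on $E$ / odd on $V$'' interchanged via complementation in $G$, which is valid precisely because $H$ is odd so every e-vertex of $G$ has degree $3$ and every v-vertex has odd degree.
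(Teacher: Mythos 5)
Your overall strategy --- transcribing the even case with ``even on $V$ / $2$-regular on $E$'' and ``odd on $V$ / $1$-regular on $E$'' interchanged via complementation in the incidence graph --- is exactly what the paper intends (its proof of this theorem is just ``proved similarly'' to the even case), and your handling of $(1)\Leftrightarrow(2)$ and $(2)\Rightarrow(3)$ is correct, provided ``odd subgraph'' is read as ``every $v\in V$ has odd (hence positive) degree in $G''$.'' That strong reading is forced: if some v-vertex had degree $0$ in $G''$, the complement $G'=G-E(G'')$ would give that vertex odd degree $\deg_G(v)$ and Theorem~\ref{the:ET-G} would not apply. (The corollary following the theorem, which deduces $|V|\le|E|$ from statement (2), confirms this reading.)

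The gap you flag in $(3)\Rightarrow(2)$ is genuine, but your proposed repair (greedy choice of representatives plus local swaps) cannot work, because under the literal reading of $(3)$ that implication is false rather than merely delicate. As written, $(3)$ is satisfied by \emph{every} hypergraph under consideration --- partition $E$ into singletons, each of odd cardinality with nonempty intersection --- yet not every odd $3$-uniform hypergraph is quasi-eulerian. Concretely, take $e_1=\{1,2,3\}$, $e_2=\{1,2,4\}$, $e_3=\{1,2,5\}$: this hypergraph is connected, $3$-uniform, and odd, and $(3)$ holds (e.g.\ with the single part $\{e_1,e_2,e_3\}$, whose intersection is $\{1,2\}$), but a closed strict trail through all three edges would need three pairwise distinct anchors drawn from $\{1,2\}$, and no smaller sub-family of closed strict trails covers the edges either. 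Structurally, statement $(2)$ forces $|V|\le |E|$ while statement $(3)$ imposes no such constraint, so no amount of representative reassignment can bridge them. The correct strengthening of $(3)$, which is what $(2)\Rightarrow(3)$ actually produces, is that one can choose representatives $v_S\in\bigcap_{e\in S}e$ so that \emph{every} vertex of $V$ is chosen for an odd number of parts --- equivalently, there is a map $\phi:E\to V$ with $\phi(e)\in e$ for all $e$ and $|\phi^{-1}(v)|$ odd for all $v\in V$. With that formulation $(3)\Leftrightarrow(2)$ is immediate and no parity repair is needed; you should either prove the theorem in that form or record explicitly that the stated form of $(3)$ is strictly weaker than $(1)$ and $(2)$.
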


\begin{cor}
Let $H=(V,E)$ be an odd 3-uniform quasi-eulerian hypergraph. Then $|V| \le |E|$.
\end{cor}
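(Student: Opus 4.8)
The plan is to use the equivalence of conditions (1) and (3) in Theorem~\ref{the:odd3hgs}: since $H$ is quasi-eulerian, its edge set $E$ can be partitioned into sets $S_1, S_2, \ldots, S_t$, each of odd cardinality, such that $\bigcap_{e \in S_j} e \ne \emptyset$ for every $j$. For each part $S_j$, I would fix a vertex $v_j \in \bigcap_{e \in S_j} e$; this is a single vertex that is incident with every edge of $S_j$. The idea is then a counting argument on flags (or simply on the degrees $\deg_H(v_j)$), exploiting that $H$ is odd and $3$-uniform.

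First I would note that the chosen vertices $v_1, \ldots, v_t$ need not be distinct, so a naive ``one vertex per part'' count does not immediately give $t \le |V|$, and in any case we want $|V| \le |E|$, not a bound on the number of parts. So the more promising route is: since every edge has size $3$ and $H$ is odd, $\sum_{v\in V}\deg_H(v) = \sum_{e\in E}|e| = 3|E|$, while each of the $|V|$ terms $\deg_H(v)$ is odd, hence at least $1$. That alone only gives $|V| \le 3|E|$, which is too weak, so I must use the partition more carefully. Here is the key step: for each part $S_j$ with $|S_j| = 2k_j+1 \ge 1$, the $2k_j+1$ edges of $S_j$ each contain $v_j$ and two further vertices; these edges, restricted to $V - \{v_j\}$, are sets of size $2$, i.e.\ they form (the edge set of) a graph $\Gamma_j$ on $V-\{v_j\}$ with $2k_j+1$ edges. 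A graph with an odd number of edges has at least one vertex — in fact the number of odd-degree vertices is even, but the number of nonisolated vertices is at most twice the number of edges — so $\Gamma_j$ touches at most $2(2k_j+1)$ vertices; this is still not tight enough, and I expect the genuinely correct bookkeeping to come from looking at which vertices are \emph{forced} to appear, not which are allowed to.

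The step I expect to be the main obstacle is getting the constant exactly right: a crude flag count gives $|V| \le 3|E|$ and using oddness of every degree plus the existence of the partition must be combined to squeeze this down to $|V| \le |E|$. The clean way, which I would pursue, is to work with the $1$-regular-on-$E$, odd subgraph $G''$ of the incidence graph $G$ provided by condition (2) of Theorem~\ref{the:odd3hgs}: $G''$ has exactly $|E|$ edges (one incident flag per e-vertex), and every v-vertex of $G''$ has odd degree, hence degree $\ge 1$, so every vertex of $V$ that appears in $G''$ is nonisolated; but more importantly, counting edges of $G''$ from the $V$-side gives $|E| = \#\{\text{edges of }G''\} = \sum_{v\in V}\deg_{G''}(v) \ge \#\{v : \deg_{G''}(v)>0\}$. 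It then remains to argue that \emph{every} vertex of $V$ has positive degree in $G''$ — equivalently lies in some part's common-intersection witness — or else to restrict attention to the non-isolated vertices and note isolated vertices of $G''$ are exactly vertices $v$ all of whose incident edges have their $G''$-flag elsewhere; handling those residual vertices (showing they don't exist, or that $|V|\le|E|$ survives their presence) is the crux. Once that is settled, $|V| \le |E|$ follows immediately, completing the proof.
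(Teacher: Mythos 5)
You have converged on the paper's argument but stopped one observation short of completing it, and the step you flag as ``the crux'' is exactly the one the paper closes in half a sentence. The correct route is the one you end with: take the subgraph $G''$ of the incidence graph $G$ given by condition (2) of Theorem~\ref{the:odd3hgs}. Since $G''$ is $1$-regular on $E$, it has exactly $|E|$ edges, and counting those edges from the $V$-side gives $|E|=\sum_{v\in V}\deg_{G''}(v)$. The observation you are missing is that there are no ``residual'' isolated v-vertices to handle: condition (2) asserts that $G''$ is an \emph{odd} subgraph, and by the paper's definition of ``odd on $V'$,'' this means every $v\in V$ has odd degree in $G''$; since $0$ is even, every $v\in V$ satisfies $\deg_{G''}(v)\ge 1$. (Concretely, $G''$ arises as $G-E(G')$, where $G'$ is the even subgraph $2$-regular on $E$ supplied by Theorem~\ref{the:ET-G}; because $H$ is odd, $G$ is odd on $V$, and deleting the edges of the even subgraph $G'$ leaves every v-vertex with odd, hence positive, degree. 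This is the only place the hypothesis that $H$ is odd enters, and it is precisely what rules out the isolated vertices you were worried about.) With that in hand, $|E|=\sum_{v\in V}\deg_{G''}(v)\ge|V|$, which is the desired inequality; the paper phrases the same count as ``no two v-vertices can have a common neighbour in $G''$'' because each e-vertex has degree exactly $1$.

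So the gap is genuine but small: you identified the right subgraph, the right edge count, and the right remaining obligation, but did not discharge that obligation, and the earlier detours in your write-up (the crude $|V|\le 3|E|$ bound, the auxiliary graphs $\Gamma_j$ on $V-\{v_j\}$) are dead ends that you correctly abandon but that do not contribute to the final argument. The fix is a single sentence invoking the meaning of ``odd subgraph.''
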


\begin{proof}
By Theorem~\ref{the:odd3hgs}, $\G(H)$ has an odd subgraph $G''$ that is  1-regular on $E$. Since every $v \in V$ has degree at least 1 in $G''$, and no two v-vertices can have a common neighbour in $G''$, we must have $|V| \le |E|$.
\end{proof}

%%%%%%%%%%%%%%%%%%%%%%%
% Cycle decomposition %
%%%%%%%%%%%%%%%%%%%%%%%

\subsection{Cycle decomposition and 2-factors of quasi-eulerian \\ hypergraphs}\label{sec:CD}

The well-known Veblen's Theorem \cite{Veb} states that a connected graph is even (and hence eulerian) if and only if it admits a decomposition into cycles. The analogous result for hypergraphs is presented below.

\begin{theo}\label{the:CD}
A hypergraph is quasi-eulerian if and only if it admits a decomposition into cycles.
\end{theo}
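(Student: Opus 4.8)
The plan is to prove both implications by passing through the incidence graph $G=\G(H)$ and invoking Theorem~\ref{the:ET-G}, which says $H$ is quasi-eulerian if and only if $G$ has an even subgraph that is $2$-regular on $E$. The key link is Lemma~\ref{lem:W-W_G}(2), which matches cycles in $H$ with cycles in $G$, together with the observation that in a cycle of $H$ every anchor edge appears exactly once, so that a cycle $C$ of $H$ corresponds to a cycle $C_G$ of $G$ that visits each of its $e$-vertices exactly once (equivalently, is $2$-regular on the $e$-vertices it touches).

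For the forward direction, suppose $H$ is quasi-eulerian. By Theorem~\ref{the:ET-G}, $G$ has a subgraph $G'$ that is even on $V$ and $2$-regular on $E$. Since $G'$ is even, Veblen's Theorem \cite{Veb} lets us decompose $G'$ edge-disjointly into cycles $C_G^{(1)},\ldots,C_G^{(t)}$. Each $C_G^{(i)}$ is a cycle of the bipartite graph $G$ and hence, by Lemma~\ref{lem:W-W_G}(2), corresponds to a cycle $C^{(i)}$ in $H$. Because the $C_G^{(i)}$ are pairwise edge-disjoint in $G$ and every $e$-vertex has degree exactly $2$ in $G'$, each $e \in E$ lies on exactly one $C_G^{(i)}$, hence is an anchor edge of exactly one $C^{(i)}$; thus $H = C^{(1)} \oplus \ldots \oplus C^{(t)}$ is a decomposition into cycles. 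One small point to address: Lemma~\ref{lem:W-W_G}(2) presupposes a cycle in $G$ starting at a $v$-vertex, so when translating $C_G^{(i)}$ we should start reading it at one of its $v$-vertices, which exists because $G$ is bipartite and a cycle has even length with vertices alternating between $V$ and $E$.

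For the converse, suppose $H = C^{(1)} \oplus \ldots \oplus C^{(t)}$ is a decomposition into cycles. Each $C^{(i)}$ corresponds via Lemma~\ref{lem:W-W_G}(2) to a cycle $C_G^{(i)}$ in $G$, and since the $C^{(i)}$ partition $E$, each $e$-vertex of $G$ lies on exactly one $C_G^{(i)}$; moreover distinct $C_G^{(i)}$ share no edges of $G$, because a shared edge $ve$ of $G$ would be a flag of $H$ lying on two of the cycles, and since each $e$ is on a unique $C^{(i)}$ this cannot happen. Let $G'$ be the union of the $C_G^{(i)}$. Then $G'$ is $2$-regular on $E$ (each $e$-vertex sits on exactly one cycle, contributing degree $2$) and even on $V$ (it is an edge-disjoint union of cycles, so every vertex has even degree). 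By Theorem~\ref{the:ET-G}, $H$ is quasi-eulerian.

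The argument is essentially bookkeeping; the main thing to be careful about is keeping straight the three notions of "disjoint" in play --- edge-disjointness of the cycles $C^{(i)}$ in $H$ (which is what a decomposition gives), edge-disjointness of the $C_G^{(i)}$ in $G$, and the requirement that $G'$ be $2$-regular on $E$ --- and checking that the correspondence of Lemma~\ref{lem:W-W_G} respects all of them. No connectivity hypothesis on $H$ is needed, since Theorem~\ref{the:ET-G} as applied here only requires the incidence-graph characterization, and Veblen's Theorem applies componentwise to the even subgraph $G'$.
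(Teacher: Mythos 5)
Your proof is correct. The forward direction is essentially identical to the paper's: apply Theorem~\ref{the:ET-G} to get an even subgraph $G'$ of $\G(H)$ that is $2$-regular on $E$, decompose $G'$ into cycles via Veblen's Theorem, and translate back through Lemma~\ref{lem:W-W_G}; your remark about starting each incidence-graph cycle at a v-vertex is a nice touch of care that the paper leaves implicit. Where you genuinely diverge is the converse. The paper stays entirely inside the hypergraph: a cycle is in particular a closed strict trail, so a cycle decomposition is a family of closed strict trails covering each edge exactly once, and sequentially concatenating any two with a common anchor (as noted in the remark following the definition of Euler family) yields an Euler family directly. You instead push the cycle decomposition forward into the incidence graph, verify that the union of the image cycles is even on $V$ and $2$-regular on $E$ (your bookkeeping here --- edge-disjointness of the image cycles in $G$ following from each $e$ lying on a unique cycle of $H$ --- is sound), and invoke Theorem~\ref{the:ET-G} a second time. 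Your route is more uniform, treating both directions with the same machinery, at the cost of re-verifying disjointness properties that the direct concatenation argument gets for free; the paper's converse is shorter and more elementary. Both are valid.
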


\begin{proof}
Let $H=(V,E)$ be a quasi-eulerian hypergraph and $G$ its incidence graph. By Theorem~\ref{the:ET-G},  $G$ has an even subgraph $G'$ that is 2-regular on $E$. Hence $G'$ admits a decomposition into cycles, ${\cal C}_G$, and every e-vertex lies in exactly one of the cycles in ${\cal C}_G$. Let ${\cal C}_H$ be the corresponding family of cycles in $H$. Then every $e \in E$ lies in exactly one of the cycles in ${\cal C}_H$, so ${\cal C}_H$ is a cycle decomposition of $H$.

Conversely, assume that $H=(V,E)$ is a hypergraph with a cycle decomposition ${\cal C}$. Sequentially concatenating pairs of cycles in ${\cal C}$ with a common anchor, until no such pairs remain, yields an Euler family for $H$.
\end{proof}

In the remainder of this section, we shall focus on the relationship between eulerian properties and existence of 2-factors in a hypergraph. In Section~\ref{sec:L(H)} we observed that an Euler family in a hypergraph corresponds to a 2-factor in the intersection graph, and an Euler tour corresponds to a Hamilton cycle, but not conversely. As we shall see below, a stronger relationship exists between eulerian properties of a hypergraph and existence of 2-factors in its dual.

Recall that the {\em dual} of a non-empty hypergraph $H=(V,E)$ is the hypergraph $H^T=(E,V^T)$, where $V^T=\{v^T:v \in V \}$ and $v^T=\{ e \in E: v \in e\}$ for all $v \in V$. Observe that $(v,e) \in F(H)$ if and only if $(e,v^T) \in F(H^T)$, whence $(H^T)^T=H$ and the incidence graphs of $H$ and $H^T$ are isomorphic.

It is easy to see that a hypergraph is 2-regular if and only if its dual is 2-uniform. Below, we extend this observation to existence of 2-factors.

\begin{lemma}\label{lem:2-factor}
Let $H=(V,E)$ be a non-empty hypergraph and $H^T=(E,V^T)$ its dual. Let $E' \subseteq E$, $H'=(V,E')$, and $G'=\G(H')$.  Then the following are equivalent:
\begin{enumerate}
\item $H'$ is a 2-factor of $H$.
\item $G'$ satisfies $\deg_{G'}(v)=2$ for all $v \in V$, $\deg_{G'}(e)=|e|$ for all $e \in E'$, and $V(G')\cap E=E'$.
\item $H^T[E']$ is 2-uniform with $|V|$ edges; that is, each edge of $H^T$ intersects $E'$ in exactly 2 vertices.
\end{enumerate}
\end{lemma}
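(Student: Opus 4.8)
The plan is to prove the equivalence of the three statements by establishing the cycle of implications (1) $\Leftrightarrow$ (2) and (2) $\Leftrightarrow$ (3), using the incidence graph as the common bridge and exploiting the fact that $\G(H)$ and $\G(H^T)$ are isomorphic. Throughout I would keep in mind the identification of the edge set of $\G(H')$ with the flag set $F(H')$, and the observation that $(v,e) \in F(H)$ if and only if $(e,v^T) \in F(H^T)$.

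For (1) $\Leftrightarrow$ (2), I would simply unwind the definitions. By definition, $H'=(V,E')$ is a 2-factor of $H$ precisely when $H'$ is a spanning $2$-regular hypersubgraph of $H$, i.e. $\deg_{H'}(v)=2$ for all $v \in V$. Now $\deg_{H'}(v)$ is the number of edges $e \in E'$ with $v \in e$, which is exactly $\deg_{G'}(v)$ where $G'=\G(H')$. Similarly, for $e \in E'$, $\deg_{G'}(e)$ counts the v-vertices incident with $e$ in $G'$, which equals $|e|$ since $H' \subseteq H$ and edges keep all their vertices. The condition $V(G') \cap E = E'$ just records that the e-vertices appearing in $\G(H')$ are precisely the edges in $E'$ (the subtlety being that $\G(H')$ might in principle omit isolated e-vertices, but here the bipartition of $\G(H')$ is $\{V, E'\}$ with possibly some isolated v-vertices, never isolated e-vertices since every edge of $H$ is non-empty). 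So each clause of (2) translates verbatim into a clause of the statement "$H'$ is a spanning $2$-regular hypersubgraph of $H$ with edge set exactly $E'$", giving (1) $\Leftrightarrow$ (2).

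For (2) $\Leftrightarrow$ (3), I would pass to the dual. Since the incidence graphs of $H$ and $H^T$ are isomorphic via the map swapping $v \leftrightarrow v^T$ and fixing (the labels of) edges, the graph $G' = \G(H')$ can equally be viewed as $\G(H^T[E'])$ up to relabelling: the v-vertices $V$ of $G'$ become the e-vertices of $\G(H^T)$ corresponding to $E' \subseteq E$... wait, I need to be careful with which side is which. In $H^T=(E,V^T)$, the vertex set is $E$ and the edge set is $V^T$; the sub-object $H^T[E']$ — using the notation $H[V',E']$ introduced before Corollary~\ref{cor:LovaszH}, but here applied to $H^T$ with the roles reversed — has vertex set $E'$ and edges $\{v^T \cap E' : v^T \in V^T\}$, i.e. one edge $v^T \cap E'$ for each $v \in V$. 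Its incidence graph is the subgraph of $\G(H^T)$ induced by $E' \cup V^T$, which under the isomorphism is exactly $G'$ together with its v-vertices $V$ (now playing the role of e-vertices). Under this correspondence: "$\deg_{G'}(v)=2$ for all $v \in V$" says each of the $|V|$ edges $v^T \cap E'$ of $H^T[E']$ has cardinality exactly $2$, i.e. $H^T[E']$ is $2$-uniform; "$\deg_{G'}(e)=|e|$ for all $e \in E'$ and $V(G') \cap E = E'$" corresponds to the vertex set of $H^T[E']$ being exactly $E'$ and each such vertex having the expected degree (automatic). Counting edges: $H^T[E']$ has one edge for each $v \in V$, hence $|V|$ edges — but I should note whether distinct $v$ can give the same or empty edge $v^T \cap E'$; since (2) forces $|v^T \cap E'| = 2$ for every $v$, none is empty, and edges are counted with multiplicity so there are exactly $|V|$ of them. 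Conversely (3) gives back (2) the same way.

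The main obstacle I anticipate is bookkeeping around the precise conventions: whether incidence graphs suppress isolated vertices, how $H[V',E']$ is defined when applied to the dual, and whether the "$|V|$ edges" in (3) are counted with multiplicity (they must be, since distinct vertices $v$ of $H$ can have $v^T \cap E'$ equal as sets). Once those conventions are pinned down — using Theorem~\ref{the:conn} and the stated facts about $\G(H^T)$ — each implication is a direct dictionary translation with no real combinatorial content, so the proof should be short. I would write it as two short paragraphs, one for (1) $\Leftrightarrow$ (2) and one for (2) $\Leftrightarrow$ (3), making the dual incidence-graph isomorphism explicit at the start of the second.
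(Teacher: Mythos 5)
Your proposal is correct and follows essentially the same route as the paper: the paper likewise dismisses (1)$\Leftrightarrow$(2) as immediate from the definitions and obtains the equivalence with (3) via the duality isomorphism $\G(H[E'])\cong\G(H^T[E'])$ (sending $v\mapsto v^T\cap E'$), which is exactly your incidence-graph dictionary; routing through (2) rather than (1) is an immaterial difference. Your attention to counting the edges $v^T\cap E'$ with multiplicity and to the non-emptiness forced by $2$-regularity matches the implicit bookkeeping in the paper's argument.
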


\begin{proof}
It is clear that Statements (1) and (2) are equivalent.

Let $V'=\bigcup_{e \in E'} e$, and recall that $H[E']=(V',E')$. To see that (1) and (3) are equivalent, first observe that $\G(H[E'])$ and $\G(H^T[E'])$ are isomorphic with the isomorphism $\phi: V(\G(H[E'])) \rightarrow V(\G(H^T[E']))$ defined by $\phi(v)=v^T \cap E'$ for all $v \in V'$ and $\phi(e)=e$ for all $e \in E'$. If $(V,E')$ is a 2-factor, then $(V,E')=H[E']$, and since $\G(H[E'])$ and $\G(H^T[E'])$ are isomorphic and $H[E']$ is 2-regular with $|V|$ vertices, $H^T[E']$ is 2-uniform with $|V|$ edges. Conversely, if $H^T[E']$ is 2-uniform with $|V|$ edges, then $H[E']$ is 2-regular with $|V|$ vertices. Hence $H[E']=(V,E')$ and this subhypergraph is a 2-factor of $H$.
\end{proof}

The above lemma easily implies the following.

\begin{cor}
Let $H=(V,E)$ be a non-empty hypergraph and $H^T=(E,V^T)$ its dual. Then $H$ admits a 2-factorization if and only if there exists a partition $\{ E_1,\ldots, E_k\}$ of $E$  such that for all $i\in \{1,\ldots,k\}$, the vertex-induced subgraph $H^T[E_i]$ is 2-uniform with $|V|$ edges (that is, each edge of $H^T$ intersects $E_i$ in exactly 2 vertices).
\end{cor}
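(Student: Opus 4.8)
The plan is to derive the corollary directly from Lemma~\ref{lem:2-factor} applied componentwise. A $2$-factorization of $H$ is, by definition, a decomposition $H=H_1\oplus\cdots\oplus H_k$ in which each $H_i=(V,E_i)$ is a spanning $2$-regular hypersubgraph of $H$; since the $E_i$ partition $E$ and each $H_i$ is spanning on $V$, this is precisely the data of a partition $\{E_1,\dots,E_k\}$ of $E$ together with the requirement that each $H_i=(V,E_i)$ be a $2$-factor of $H$. So the statement to prove amounts to: such a partition exists with each $(V,E_i)$ a $2$-factor if and only if a partition exists with each $H^T[E_i]$ being $2$-uniform with $|V|$ edges.

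First I would spell out the equivalence ``$H=H_1\oplus\cdots\oplus H_k$ is a $2$-factorization'' $\Longleftrightarrow$ ``$\{E_1,\dots,E_k\}$ is a partition of $E$ and each $(V,E_i)$ is a $2$-factor of $H$,'' which is immediate from the definitions of decomposition, $r$-factor, and $r$-factorization given in the Preliminaries. Then, for the forward direction, given a $2$-factorization I apply the implication (1)$\Rightarrow$(3) of Lemma~\ref{lem:2-factor} with $E'=E_i$ to each $i$: this yields that $H^T[E_i]$ is $2$-uniform with $|V|$ edges, i.e.\ each edge of $H^T$ meets $E_i$ in exactly two vertices. For the converse, given a partition $\{E_1,\dots,E_k\}$ with each $H^T[E_i]$ $2$-uniform on $|V|$ edges, I apply the implication (3)$\Rightarrow$(1) of the same lemma to conclude each $(V,E_i)$ is a $2$-factor of $H$; combined with the fact that the $E_i$ partition $E$, this is exactly a $2$-factorization of $H$.

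There is essentially no obstacle here: the content has already been done in Lemma~\ref{lem:2-factor}, and the corollary is a bookkeeping statement that lifts the single-$E'$ equivalence to a partition of all of $E$. The only point requiring a word of care is the translation between the ``abstract'' notion of $2$-factorization (a decomposition into spanning $2$-regular hypersubgraphs) and the ``combinatorial'' data of a partition of the edge set: one must note that $E_i\subseteq E$ forces $\bigcup_{e\in E_i}e\subseteq V$, that $2$-regularity on $V$ forces each vertex of $V$ to be covered (so the hypersubgraph really is spanning and equals $(V,E_i)$, not a proper subhypergraph), and that the $\oplus$ decomposition is exactly the statement that the $E_i$ partition $E$. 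Once that identification is made, the proof is a two-line invocation of Lemma~\ref{lem:2-factor} in each direction, so I would keep the write-up to one short paragraph citing ``the above lemma easily implies.''
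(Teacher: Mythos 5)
Your proposal is correct and matches the paper's intent exactly: the paper states this corollary with no written proof beyond the remark that ``the above lemma easily implies'' it, and the intended argument is precisely your componentwise application of the equivalence (1)$\Leftrightarrow$(3) of Lemma~\ref{lem:2-factor} to each cell $E_i$ of the partition. Your extra remarks on identifying a $2$-factorization with a partition of $E$ into $2$-factor edge sets are harmless bookkeeping that the paper leaves implicit.
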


We are now ready to demonstrate the correspondence between 2-factors in a hypergraph with no odd-size edges and particular Euler families in its dual.

\begin{theo}\label{the:2-factor}
Let $H=(V,E)$ be a non-empty hypergraph without empty edges
such that $|e|$ is even for all $e \in E$, and let $H^T=(E,V^T)$ be its dual. Let $E' \subseteq E$.

Then $(V,E')$ is a 2-factor (connected 2-factor) of $H$ if and only if $H^T$ has an Euler family (Euler tour, respectively) with anchor set $E'$ that traverses every vertex $e \in E'$ exactly $\frac{|e|}{2}$ times.
\end{theo}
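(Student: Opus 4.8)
The strategy is to translate both sides of the equivalence into statements about the common incidence graph $G = \G(H) \cong \G(H^T)$, and then apply the incidence-graph characterizations already established, namely Lemma~\ref{lem:2-factor} and Theorem~\ref{the:ET-G}. Recall that under the canonical isomorphism between $\G(H)$ and $\G(H^T)$, v-vertices of $G$ regarded as belonging to $H$ are precisely the e-vertices of $G$ regarded as belonging to $H^T$, and conversely; a subgraph of $G$ that is ``$2$-regular on $V$'' from the $H$-perspective is exactly ``$2$-regular on $E(H^T)$'s vertex set'' from the $H^T$-perspective, with the roles of the two colour classes swapped.

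First I would set up the dictionary precisely. Suppose $(V,E')$ is a $2$-factor of $H$. By Lemma~\ref{lem:2-factor}, this is equivalent to $G' := \G((V,E'))$ satisfying $\deg_{G'}(v) = 2$ for all $v \in V$, $\deg_{G'}(e) = |e|$ for all $e \in E'$, and $V(G') \cap E = E'$. Now view $G'$ as a subgraph of $\G(H^T)$: there the $e \in E'$ play the role of v-vertices and the $v \in V$ play the role of e-vertices. So $G'$ is, from the $H^T$-perspective, a subgraph that is $2$-regular on the v-vertex side restricted to $E'$ and has every e-vertex $v^T$ of full degree $|v^T| = \deg_H(v)$ — wait, I must be careful: in $\G(H^T)$ the e-vertices are the elements of $V^T$, and $\deg_{\G(H^T)}(v^T) = |v^T| = \deg_H(v)$, but in $G'$ we only have $\deg_{G'}(v) = 2$ since $G'$ came from deleting edges of $H$ not in $E'$. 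The point is that $G'$ is $2$-regular on $V$ (= e-vertex set of $H^T$) and has e-vertex-side — I mean v-vertex side $E'$ with degrees $|e|$. Since every $|e|$ is \emph{even}, this means: $G'$ is an even subgraph of $\G(H^T)$ that is $2$-regular on the entire e-vertex set $V^T$ of $H^T$, with the e-vertices not in... no: every e-vertex $v^T \in V^T$ has degree exactly $2$ in $G'$, and every v-vertex $e \in E'$ has even degree $|e|$, while v-vertices $e \notin E'$ have degree $0$. By Theorem~\ref{the:ET-G} applied to $H^T$ (after discarding trivial components, i.e.\ vertices $e \in E - E'$ that become isolated), $G'$ corresponds to an Euler family of $H^T$ whose anchor set is exactly $E'$, and in which a trail traverses $e \in E'$ as an anchor exactly $\deg_{G'}(e)/2 = |e|/2$ times — this last count is the standard fact that a closed trail visits a vertex of degree $d$ in the trail's support exactly $d/2$ times as an anchor. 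The ``connected $2$-factor'' case is the special case where $G'$ has a single non-trivial component, which by Theorem~\ref{the:ET-G} is exactly an Euler tour of $H^T$.

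The converse runs the same isomorphism backwards: given an Euler family of $H^T$ with anchor set $E'$ in which each $e \in E'$ is traversed $|e|/2$ times, Theorem~\ref{the:ET-G} produces an even subgraph $G'$ of $\G(H^T)$ that is $2$-regular on $V^T$; the traversal-count hypothesis forces $\deg_{G'}(e) = |e|$ for each anchor $e$, and anchors being exactly $E'$ forces $V(G') \cap E = E'$. Reading $G'$ back in $\G(H)$ and invoking Lemma~\ref{lem:2-factor} gives that $(V,E')$ is a $2$-factor of $H$; a single non-trivial component (Euler tour) yields a connected $2$-factor. The main obstacle, and the only place requiring genuine care rather than bookkeeping, is the bridge between ``degree of $e$ in the subgraph $G'$'' and ``number of times $e$ is traversed as an anchor by the corresponding closed strict trail in $H^T$'': one must check that the correspondence of Theorem~\ref{the:ET-G} between even subgraphs $2$-regular on one side and Euler families does in fact send a v-vertex of $G'$-degree $d$ to an anchor traversed $d/2$ times, which follows because an Euler tour of a connected even graph passes through each vertex (number of incident edges)$/2$ times, combined with Lemma~\ref{lem:W-W_G} relating anchors of a strict trail in $H^T$ to the e-vertices visited by the corresponding trail in $\G(H^T)$. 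The evenness hypothesis on $|e|$ is exactly what is needed for $G'$ to be an \emph{even} subgraph on the v-vertex side, so that Theorem~\ref{the:ET-G} applies; without it the argument breaks, which is why the hypothesis appears.
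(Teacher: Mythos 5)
Your proposal is correct and follows essentially the same route as the paper's proof: both translate the 2-factor condition into degree conditions on the common incidence graph via Lemma~\ref{lem:2-factor}, observe that the evenness of all $|e|$ makes this an even subgraph of $\G(H^T)$ that is 2-regular on $V^T$, and then invoke Theorem~\ref{the:ET-G} together with the anchor-count $\deg_{G'}(e)/2 = |e|/2$. The only cosmetic difference is that the paper cites a result from \cite{BahSaj} for the connectedness of the incidence graph of a connected 2-factor, where you appeal directly to the single-nontrivial-component clause of Theorem~\ref{the:ET-G}.
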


\begin{proof}
Observe that, with the assumptions of the theorem, the dual $H^T$ has no vertices of odd degree.

Assume $F=(V,E')$ is a 2-factor of $H$, and let $G'$ be its incidence graph. By Lemma~\ref{lem:2-factor}, $G'$ is a subgraph of the incidence graph $\G(H)$ such that $\deg_{G'}(v)=2$ for all $v \in V$, $\deg_{G'}(e)=|e|$ for all $e \in E'$, and $V(G')\cap E=E'$. Hence the incidence graph of the dual $H^T$ admits a subgraph that is 2-regular on $V^T$ and even on $E$, which implies that $H^T$ admits an Euler family. Since $\deg_{G'}(e)=|e|$ for all $e \in E'$, this Euler family of $H^T$ traverses each $e \in E'$ exactly $\frac{|e|}{2}$ times, and each $e \in E-E'$ not at all. If, in addition, the 2-factor $F$ is connected, then $G'$ is connected by \cite[Theorem 3.11]{BahSaj}, and hence corresponds to an Euler tour of $H^T$.

The converse is proved by reversing the above steps.
\end{proof}

\begin{cor}
Let $H=(V,E)$ be a non-empty hypergraph without empty edges such that $|e|$ is even for all $e \in E$, and let $H^T=(E,V^T)$ be its dual.

Then $H$ admits a 2-factorization  if and only if there exists a partition $\{ E_1,\ldots, E_k\}$ of $E$ such that for each $i=1,\ldots,k$, the dual $H^T$ admits an Euler family with anchor set $E_i$ that traverses every vertex $e \in E_i$ exactly $\frac{|e|}{2}$ times..
\end{cor}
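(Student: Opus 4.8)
The plan is to obtain this as an immediate consequence of Theorem~\ref{the:2-factor} once the definition of a $2$-factorization is unwound. First I would observe that a $2$-factorization of $H$ is a decomposition $H=H_1\oplus\cdots\oplus H_k$ in which each $H_i$ is a $2$-factor, and that, since a $2$-factor is spanning, each $H_i$ necessarily has the form $(V,E_i)$ for some $E_i\subseteq E$; because the edge sets of $H_1,\ldots,H_k$ partition $E$, the family $\{E_1,\ldots,E_k\}$ is precisely a partition of $E$. Conversely, any partition $\{E_1,\ldots,E_k\}$ of $E$ all of whose parts induce $2$-factors $(V,E_i)$ assembles back into a $2$-factorization $H=(V,E_1)\oplus\cdots\oplus(V,E_k)$. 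Thus the existence of a $2$-factorization of $H$ is equivalent to the existence of a partition $\{E_1,\ldots,E_k\}$ of $E$ with $(V,E_i)$ a $2$-factor of $H$ for every $i$.

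Next I would invoke Theorem~\ref{the:2-factor} separately for each part $E_i$, taking $E'=E_i$. The parity hypothesis of that theorem --- that $|e|$ is even for all $e\in E$ --- is exactly the hypothesis of the present corollary, so nothing needs to be re-verified, and the theorem then yields: $(V,E_i)$ is a $2$-factor of $H$ if and only if $H^T$ admits an Euler family with anchor set $E_i$ that traverses each vertex $e\in E_i$ exactly $\frac{|e|}{2}$ times. Substituting this characterization into the equivalence established in the previous step produces the statement of the corollary verbatim.

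I do not anticipate any real obstacle: the argument is pure bookkeeping layered on top of Theorem~\ref{the:2-factor}, and the only point worth stating with any care is that the equivalence must be applied part by part and that the even-cardinality hypothesis is inherited by each part without change. If one wishes to exclude degenerate partitions, one can additionally note that no part $E_i$ may be empty, since a spanning $2$-regular hypersubgraph on the nonempty vertex set $V$ must contain edges.
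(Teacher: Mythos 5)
Your proposal is correct and matches the paper's own proof: both directions are obtained by identifying a $2$-factorization with a partition $\{E_1,\ldots,E_k\}$ of $E$ into edge sets of $2$-factors and then applying Theorem~\ref{the:2-factor} to each part $E'=E_i$. The extra remarks about the parity hypothesis being inherited and about nonempty parts are fine but not needed; the argument is the same bookkeeping the paper carries out.
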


\begin{proof}
Let $\{F_1,\ldots,F_k\}$ be a 2-factorization of $H$. For each $i \in \{ 1,\ldots,k\}$, let $E_i=E(F_i)$. Then $\{ E_1,\ldots,E_k\}$ is a partition of $E$, and by Theorem~\ref{the:2-factor}, for each $i \in \{ 1,\ldots,k\}$,
the dual $H^T$ admits an Euler family with anchor set $E_i$ that traverses every vertex $e \in E_i$ exactly $\frac{|e|}{2}$ times.

Conversely, assume that $\{ E_1,\ldots,E_k\}$ is a partition of $E$ such that, for each $i \in \{ 1,\ldots,k\}$,
the dual $H^T$ admits an Euler family with anchor set $E_i$ that traverses every vertex $e \in E_i$ exactly $\frac{|e|}{2}$ times. Then by Theorem~\ref{the:2-factor}, each  $F_i=(V,E_i)$ is a 2-factor of $H$, and $\{F_1,\ldots,F_k\}$ is a 2-factorization.
\end{proof}

\begin{center}
{\large \bf Acknowledgement}
\end{center}

The first author wishes to thank the Department of Mathematics and Statistics, University of Ottawa, for its hospitality during his postdoctoral fellowship, when this research was conducted. The second author gratefully acknowledges financial support by the Natural Sciences and Engineering Research Council of Canada (NSERC).
\medskip

\end{document}